\newcommand{\numberset}{\mathbb}
\newcommand{\N}{\numberset{N}}
\newcommand{\R}{\numberset{R}}
\newcommand{\B}{\numberset{B}}
\newcommand{\Pk}{\numberset{P}}
\renewcommand{\epsilon}{\varepsilon}
\renewcommand{\theta}{\vartheta}
\renewcommand{\rho}{\varrho}
\renewcommand{\phi}{\varphi}
\newcommand{\kk}{\boldsymbol{k}}
\newcommand{\gr}{\nabla}
\def\kint{k_o}
\def\kbou{k_{\partial}}
\def\hpartialE{{h_{\partial E}}}
\def\hbulkE{{h_{E}}}
\def\hpartial{{h_{\partial}}}
\def\hbulk{{h}}
\def\Vem{V^h_{\kk}(E)}
\def\VemG{V^h_{\kk}}
\def\Pkint{\Pk_{\kint}}
\def\hh{\mathcal{H}}
\def\P0{{\Pi^{0, E}_{\kint-2}}}
\def\emixed{e_{u}}
\def\PN{{\Pi^{\nabla, E}_{\kint}}}
\def\PP0{{\boldsymbol{\Pi}^{0, E}_{k-1}}}
\def\Stab{\mathcal{S}^E}
\def\vint{\delta_o}
\def\vp{\delta_{\partial}}
\lbrace\begin{array}{@{}l@{}}}%
\theoremstyle{definition}
\theoremstyle{remark}
\newtheorem{remark}{Remark}[section]
\theoremstyle{remark}
\theoremstyle{plain}
\newtheorem{proposition}{Proposition}[section]
\newtheorem{corollary}{Corollary}[section]
\newtheorem{lemma}{Lemma}[section]
\newtheorem{definition}{Definition}[section]
\author[1]{L. Beir\~ao da Veiga \thanks{lourenco.beirao@unimib.it}}
\author[1]{G. Vacca \thanks{giuseppe.vacca@unimib.it}}
\affil[1]{Dipartimento di Matematica e Applicazioni,
Universit\`a degli Studi di Milano Bicocca,
Via Roberto Cozzi 55 - 20125 Milano, Italy}
\title{\textbf{Sharper error estimates for Virtual Elements and a bubble-enriched version}}
\date{\today}
\begin{document}

\maketitle

\begin{abstract}
In the present contribution we develop a sharper error analysis for the Virtual Element Method, applied to a model elliptic problem, that separates the element boundary and element interior contributions to the error. As a consequence we are able to propose a variant of the scheme that allows to take advantage of polygons with many edges (such as those composing Voronoi meshes or generated by agglomeration procedures) in order to yield a more accurate discrete solution. The theoretical results are supported by numerical experiments.
\end{abstract}

\section{Introduction}
\label{sec:intro}

The Virtual Element Method (VEM) was introduced in \cite{volley,autostoppisti} as a generalization of the
finite element method that is able to cope with general polytopal meshes, even with non-convex
and badly-shaped elements. Since its introduction, the VEM enjoyed a large success in the
numerical analysis and engineering communities, with many papers devoted to develop its
theoretical foundations and many others devoted to applications in different areas (a short representative list includes
\cite{
brezzi-marini:2013,
berrone:2016,
AMV:2018,
artioli:2019,
visinoni:2020,
sacco:2020,
BPP:2017,
cangiani:2017,
zhang:2019,
mora:2020,
scacchi:2017,
fumagalli:2018,
sukumar:2019,
CGS:2017,
CGS:2018,
MPP:2018,
mascotto:2019,
BRV:2019,
reddy:2017,
paulino:2020,
cao-chen:2018,
brenner-sung:2019,
GMV:2019,
frittelli:2018,
natarajan:2020}). 
This contribution falls into the first category, as it originates from a natural question about Virtual Elements (which is often heard at conferences) and it improves the existing theoretical results; on the basis of our findings, we also propose an interesting variant of the scheme. Our investigation focuses on a model 2D elliptic problem.

In the present manuscript we investigate if, and how, the presence of many edges can help the approximation capabilities of the method. Indeed, standard $H^1$-conforming virtual elements have degrees of freedom associated to element edges and vertexes (in addition to moments inside). Therefore one may wonder if, given a certain element size (diameter), having many edges may help somehow the interpolation accuracy of the discrete space, and if this will reflect also on the final error among the discrete and exact solutions. Basically, the answer is no, but the investigation allows to shed more light on the matter and develop an interesting variant.

Looking into the interpolation capabilities of the VEM space, by a refined analysis we show
that the $H^1$ interpolation error $\| u - u_I \|_{H^1(E)}$ on each element (polygon) $E$ can be split into a boundary contribution and a bulk contribution. 
Assuming a sufficiently regular target function, the boundary contribution behaves as $h_{\partial E}^k$ (with $h_{\partial E}$ denoting the maximum edge length and $k$ representing the VEM ``polynomial'' degree) and therefore it decreases in the presence of smaller edges. On the contrary, the bulk part behaves as $h_{E}^k$, with $\hbulkE$ the element diameter. Therefore, basically, having more edges does not help as the second term will dominate the error.
On the other hand, this investigation leads to the following idea: if one increases the
degree of the VEM only inside the element (that in practice corresponds to adding DoFs inside,
which can then be statically condensed) then the bulk approximation order improves.
For such ``enriched'' VEM, elements with small edges indeed lead to more
accurate interpolation. Moreover, having a richer internal DoFs set (more moments) allows to
compute projections on polynomials of higher order, and thus guarantees also a more accurate
approximation of the bilinear form. As a consequence, the above enhanced accuracy
directly reflects also on the ``consistency'' error among the discrete and the continuous formulations. 

A further important ingredient in our analysis is investigating the stability properties of the discrete problem. The most widely adopted VEM stabilization in the literature, that is the so called ``dofi-dofi'' stabilization \cite{volley}, is not robust with respect to the number of edges. This is the reason why, even in the deep analysis of \cite{BLR:2017,brenner-sung:2018,chen-huang:2018}, a uniform bound on the number of edges is assumed for the dofi-dofi stabilization. Since such assumption would represent a strong limitation to the scopes of the present study, we develop an improved stabilility investigation that leads to a sharper bound in terms of the number of edges. By a careful use of the discrete interpolant and a suitable bound for the element $H^{1/2}$ boundary norm, we are finally able to show  an error estimate of the kind
$$
\|u - u_h\|_{1, \Omega}^2  \lesssim \alpha \sum_{E \in \Omega_h} 
\Big( (\alpha + \ell_E)^{1/2} \, h_E^{\kint} + h_{\partial E}^{\kbou} \Big)^2 \, ,
$$
where $\kint \ge \kbou$ are respectively the internal and boundary degrees, $\ell_E$ denotes the number of edges of element $E$, and $\alpha$ is a logarithmic term (thus essentially negligible) of the maximum ratio among the larger and the smaller edge of each element.
In our study we also investigate another well-know stabilization form, the so-called ``trace'' stabilization \cite{wriggers:2016} which leads to a final error that is fully robust also with respect to the number (and different size) of edges:
$$
\|u - u_h\|_{1, \Omega}^2  \lesssim \sum_{E \in \Omega_h} 
\Big( h_E^{\kint} + h_{\partial E}^{\kbou} \Big)^2  \, .
$$
Our theoretical results are supported by a set of numerical tests, where we can appreciate from the practical standpoint the two distinct contributions to the error (boundary and bulk), and the improvement of the enriched version. The numerical experiments are developed both for quadrilateral/Voronoi meshes with edge subdivision and on meshes generated by an agglomeration procedure.

The paper is organized as follows. 
In Section \ref{sec:notations} we present the continuous problem, we fix some notations and discuss the mesh assumptions. Afterwards, in Section \ref{sec:VEM} we introduce the generalized VEM and investigate the stability properties of the scheme. In Section \ref{sec:convergence} we develop the interpolation and convergence properties of the method. In Section \ref{sec:tests} we present the numerical experiments. In the Appendix we show the proof of a useful Lemma.


\section{Notations and Preliminaries}
\label{sec:notations}
Throughout the paper, we will follow the usual notation for Sobolev spaces
and norms \cite{Adams:1975}.
Hence, for an open bounded domain $\omega$,
the norms in the spaces $W^s_p(\omega)$ and $L^p(\omega)$ are denoted by
$\|{\cdot}\|_{W^s_p(\omega)}$ and $\|{\cdot}\|_{L^p(\omega)}$ respectively.
Norm and seminorm in $H^{s}(\omega)$ are denoted respectively by
$\|{\cdot}\|_{s,\omega}$ and $|{\cdot}|_{s,\omega}$,
while $(\cdot,\cdot)_{\omega}$ and $\|\cdot\|_{\omega}$ denote the $L^2$-inner product and the $L^2$-norm (the subscript $\omega$ may be omitted when $\omega$ is the whole computational
domain $\Omega$).

\subsection{Continuous Problem}
\label{sub:cp}

In the present paper for simplicity  we consider the Poisson equation, but observe that the same approach can be easily extended to more general problems.

Let $\Omega \subset \R^2$ be the computational domain and let $f\in L^2(\Omega)$ represent the external load, then our model problem reads
\begin{equation}
\label{eq:poisson-c}
\left \{
\begin{aligned}
& \text{find $u \in V$ s.t.} 
\\
& a(u,  v) = ( f,  v) \qquad \text{for all $v \in V$,}
\end{aligned}
\right.
\end{equation}
where $V=H^1_0(\Omega)$ and $a(\cdot,  \cdot) \colon V \times V \to \R$ is given by
\begin{equation}
\label{eq:a-c}
a(u,  v) := \int_{\Omega} \nabla u \cdot \nabla v \, {\rm d}\Omega
\qquad \text{for all $u, v \in V$.}
\end{equation}
It is well known that Equation \eqref{eq:poisson-c} has a unique solution $u \in V$ s.t.
$|u|_{1, \Omega} \leq \|f\|_{V^*}$.


\subsection{Mesh notations and assumptions}
\label{sub:notations}
From now on, we will denote with $E$ a general polygon having $\ell_E$ edges,  $e$ will denote a general edge of $E$ and $\partial E := \cup_{i=1}^{\ell_E} e_i$. 
Let us introduce the following notation:
\[
\begin{gathered}
\hbulkE := {\rm diameter}(E) \,,
\quad  
h_e := \text{length}(e) \,,
\quad 
\hpartialE := \max_{e \in \partial E} h_{e} \,,
\quad 
\hh_E := \frac{\max_{e \in \partial E} h_{e}}{\min_{e \in \partial E} h_{e}} \,.
\end{gathered}
\]
Let $\set{\Omega_h}_h$ be a sequence of decompositions of $\Omega$ into general polygons $E$, 
where we set
\begin{equation}
\label{eq:h}
\hbulk := \sup_{E \in \Omega_h} \hbulkE \,,
\qquad \qquad
\hpartial := \sup_{E \in \Omega_h} \hpartialE \,,
\qquad \qquad
\hh := \sup_{E \in \Omega_h} \hh_E \,.
\end{equation}
We suppose that $\set{\Omega_h}_h$ fulfils the following assumption ~\cite{BLR:2017,brenner-guan-sung:2017,brenner-sung:2018,chen-huang:2018}:
\begin{itemize}
\item[\textbf{(A1)}]
there exists a uniform positive constant $\rho$ such that $E \in \set{\Omega_h}_h$ is star-shaped with respect to a ball $B_E$ of radius $ \geq\, \rho \, \hbulkE$.
\end{itemize}
Note that in the present paper we do not require any condition in order to forbid ``small edges'' (that is, edges of a generic element may be arbitrarily smaller than its diameter) or a uniform bound on the number of edges. We will instead investigate explicitly the influence of such parameters in our estimates. In this respect, we introduce the following definition.
\begin{definition}\label{pqu}
Let $\{ {\cal T}_h \}_h$ represent a family of one-dimensional grids, each meshing a bounded interval $I^h \subset {\mathbb R}$. Then, such family is denoted as {\bf piecewise quasi-uniform} if there exist ${\overline m} \in {\mathbb N}$ and $\overline{c} \in {\mathbb R}^+$ such that the following holds.
Any mesh in the family can be decomposed into at most $\overline{m}$ disjoint subset grids (each meshing a sub-interval of $I^h$), each of them being quasi-uniform (precisely, the ratio among the largest and the smallest element of each subset mesh is bounded by $\overline{c}$).
\end{definition}
We now note that, for each element $E$ of $\{ \Omega_h \}_h$, the partition induced by the edges on $\partial E$ can be naturally interpreted as a one dimensional mesh. More precisely, fix any vertex ${\boldsymbol \nu}$ of $E$ and denote by $\Gamma_E \colon [0,|\partial E|] \to \partial E$ the unique curvilinear abscissae parametrization of $\partial E$ with counterclockwise orientation that satisfies $\Gamma_E(0)=\Gamma_E(|\partial E|)={\boldsymbol \nu}$. Then, the push-backward of the edges $e \subset \partial E$ constitute a partition of the interval $[0,|\partial E|]$, which is what we call the one dimensional mesh induced by the edges on $\partial E$. Roughly, this is nothing but the one dimensional mesh obtained by ``unwrapping'' the boundary of $E$ into an interval of the real line.
We can now introduce the following assumption on $\{\Omega_h\}_h$. 
\begin{itemize}
\item [\textbf{(A2)}] The family of one-dimensional meshes induced on each mesh element boundary $\partial E$ by its edges, $E \in \{\Omega_h\}_h$, is piecewise quasi-uniform.
\end{itemize}
The above assumption covers essentially all cases of interest; it allows for a number of edges per element that does not need to be uniformly bounded, and allows also the presence of ``small edges'' (in the sense described above). Mesh families created by agglomeration, cracking, gluing, etc..  of existing meshes are, for instance, included.  Some example in shown in Fig. \ref{fig:PQU}. While assumption \textbf{(A1)} will be required through all the paper, assumption \textbf{(A2)} will be needed only for certain stabilizations.
\begin{figure}[!htb]
\begin{center}
\vspace{0.5cm}
\begin{overpic}[scale=0.2]{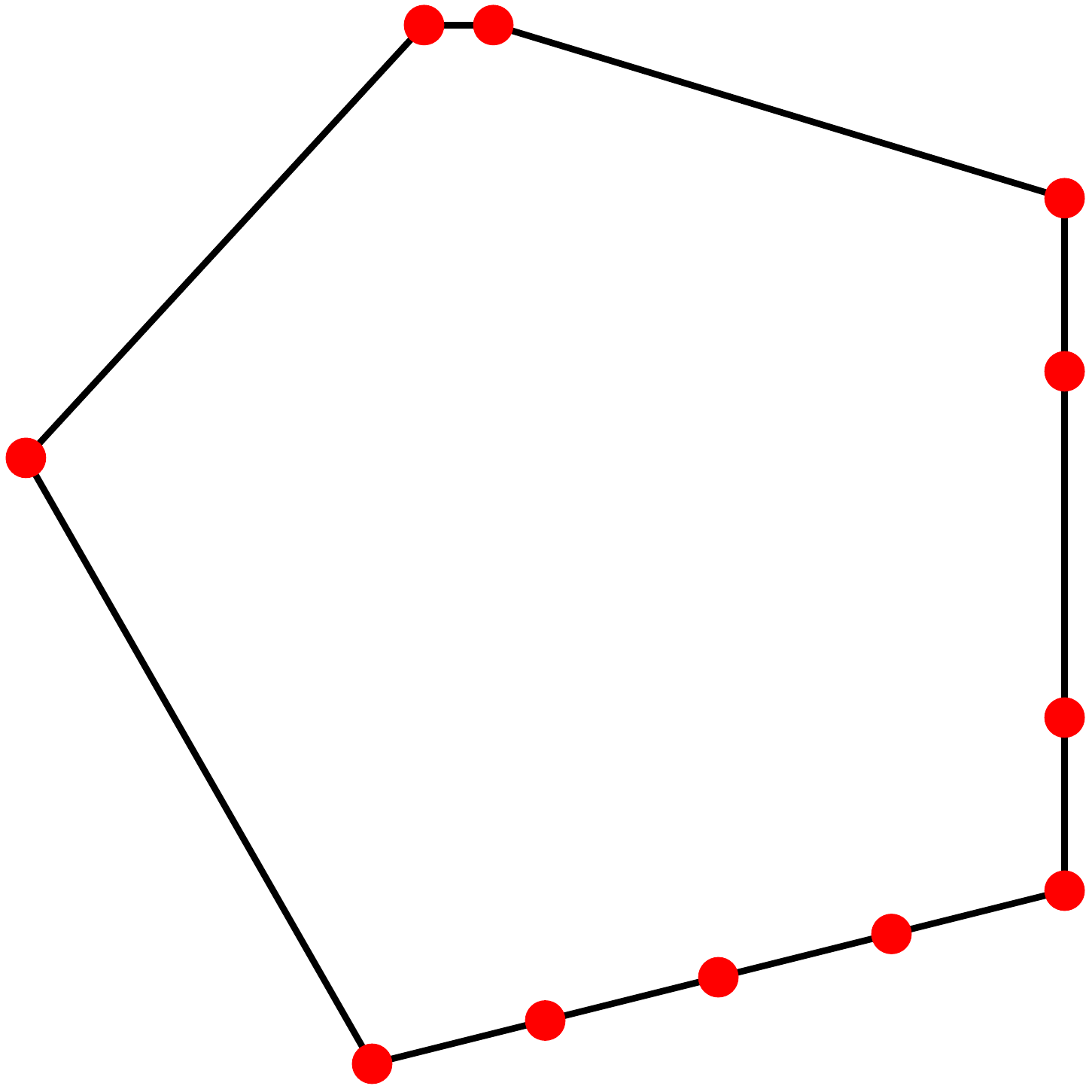}
\put(50,50){\huge{{$E$}}}
\put(65,-4){\large{{$\mathcal{T}^1_h$}}}
\put(100,50){\large{{$\mathcal{T}^2_h$}}}
\put(70,92){\large{{$\mathcal{T}^3_h$}}}
\put(38,104){\large{{$\mathcal{T}^4_h$}}}
\put(8,80){\large{{$\mathcal{T}^5_h$}}}
\put(4,20){\large{{$\mathcal{T}^6_h$}}}
\put(35,-26){\large{{\texttt{Element A}}}}
\end{overpic}
\qquad \qquad 
\begin{overpic}[scale=0.175]{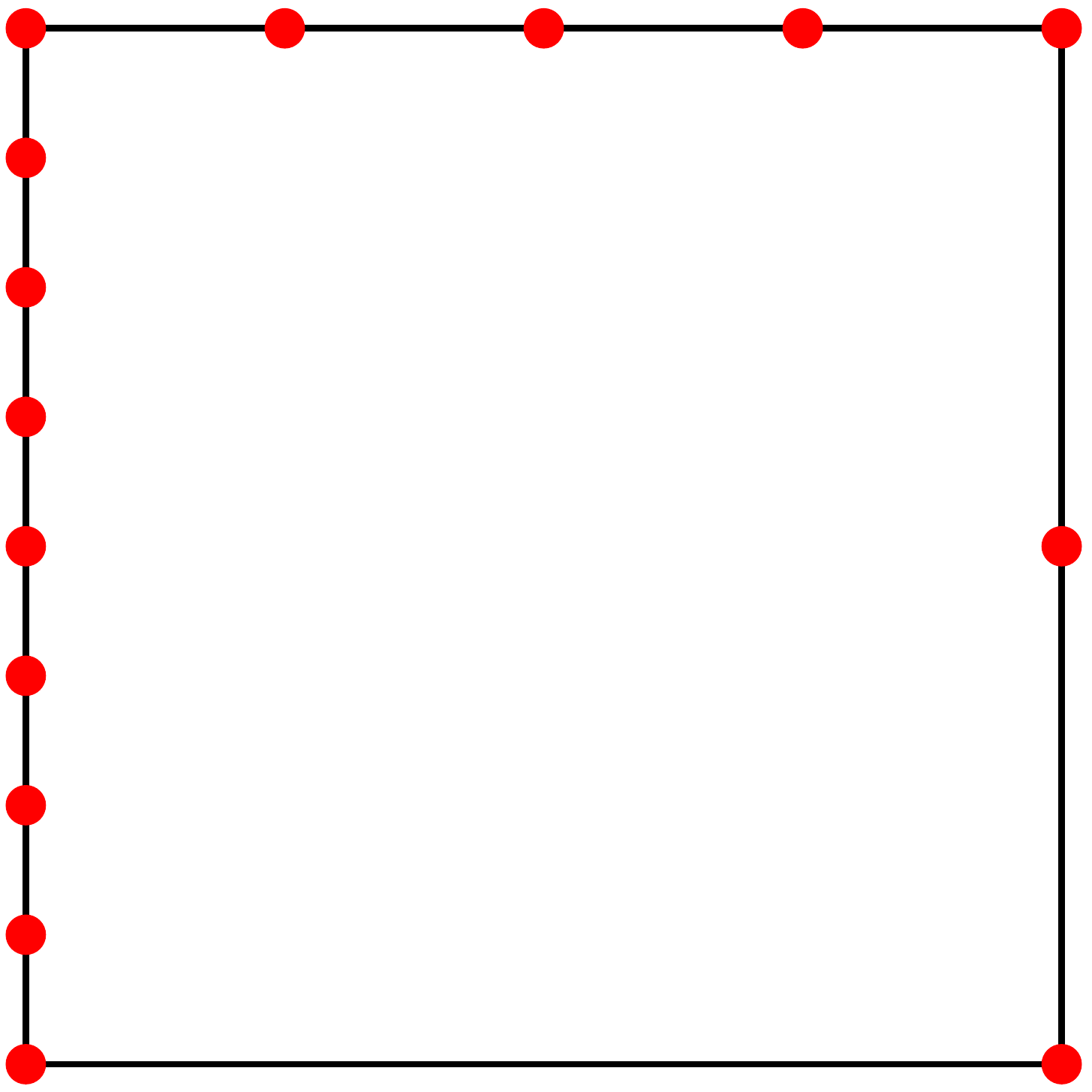}
\put(45,45){\huge{{$E$}}}
\put(45,-13){\large{{$\mathcal{T}^1_h$}}}
\put(100,45){\large{{$\mathcal{T}^2_h$}}}
\put(45,105){\large{{$\mathcal{T}^3_h$}}}
\put(-18,45){\large{{$\mathcal{T}^4_h$}}}
\put(25,-30){\large{{\texttt{Element B}}}}
\end{overpic}
\qquad \qquad 
\begin{overpic}[scale=0.2]{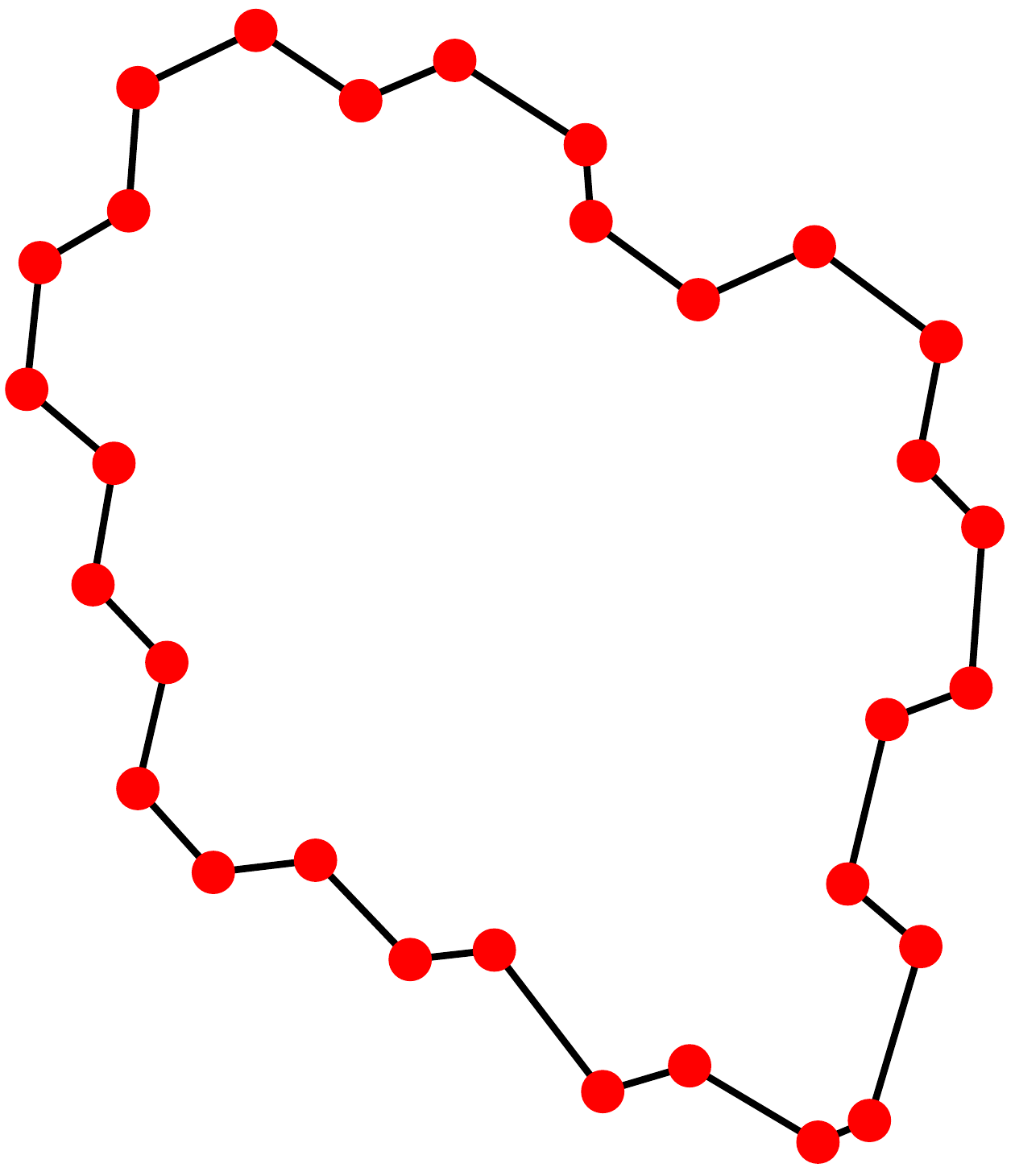}
\put(40,45){\huge{{$E$}}}
\put(40,-8){\large{{$\mathcal{T}_h$}}}
\put(25,-26){\large{{\texttt{Element C}}}}
\end{overpic}
\vspace{1cm}
\caption{
\texttt{Element A}, \texttt{Element B}, \texttt{Element C} represent mesh refinements/types that satisfy \textbf{(A2)}.
\texttt{Element A}: polygon with a ``small edge''.
\texttt{Element B}: polygon with edges with length $h_e = 2^{-kn}$ with $k=0,1,2,3$ and $n \in \N^+$.
\texttt{Element C}: polygon arising from an agglomeration procedure.
}
\label{fig:PQU}
\end{center}
\end{figure}

Using standard VEM notations, for $n \in \N$, $s \in {\mathbb R}^{+}$,   and for any $E \in \Omega_h$,  let us introduce the spaces:
\begin{itemize}
\item $\Pk_n(\omega)$ the set of polynomials on $\omega$ of degree $\leq n$  (with $\Pk_{-1}(\omega)=\{ 0 \}$),
\item $\B_n(\partial E) := \{v \in C^0(\partial E) \quad \text{s.t} \quad v_{|e} \in \Pk_n(e) \quad \text{for all edge $e \subset \partial E$} \}$,
\item $\Pk_n(\Omega_h) := \{q \in L^2(\Omega) \quad \text{s.t} \quad q_{|E} \in  \Pk_n(E) \quad \text{for all $E \in \Omega_h$}\}$,
\item $H^s(\Omega_h) := \{v \in L^2(\Omega) \quad \text{s.t} \quad v_{|E} \in  H^s(E) \quad \text{for all $E \in \Omega_h$}\}$ equipped with the broken norm and seminorm
\[
\|v\|^2_{s,\Omega_h} := \sum_{E \in \Omega_h} \|v\|^2_{s,E}\,,
\qquad 
|v|^2_{s,\Omega_h} := \sum_{E \in \Omega_h} |v|^2_{s,E} \,,
\]
\end{itemize}
and the following polynomial projections:
\begin{itemize}
\item the $\boldsymbol{L^2}$\textbf{-projection} $\Pi_n^{0, E} \colon L^2(E) \to \Pk_n(E)$, given by
\begin{equation}
\label{eq:P0_k^E}
\int_Eq_n (v - \, {\Pi}_{n}^{0, E}  v) \, {\rm d} E = 0 \qquad  \text{for all $v \in L^2(E)$  and $q_n \in \Pk_n(E)$,} 
\end{equation}

\item the $\boldsymbol{H^1}$\textbf{-seminorm projection} ${\Pi}_{n}^{\nabla,E} \colon H^1(E) \to \Pk_n(E)$, defined by 
\begin{equation}
\label{eq:Pn_k^E}
\left\{
\begin{aligned}
& \int_E \gr  \,q_n \cdot \gr ( v - \, {\Pi}_{n}^{\nabla,E}   v)\, {\rm d} E = 0 \quad  \text{for all $v \in H^1(E)$ and  $q_n \in \Pk_n(E)$,} \\
& \int_{\partial E}(v - \,  {\Pi}_{n}^{\nabla, E}  v) \, {\rm d}s= 0 \, .
\end{aligned}
\right.
\end{equation}
\end{itemize}

In the following the symbol $\lesssim$ will denote a bound up to a generic positive constant, independent of the quantities $\hbulkE$, $\hpartialE$, $\hbulk$, $\hpartial$ and $\ell_E$ but which may depend on  $\Omega$, on the ``polynomial'' order $\kk$ (introduced below) and on the regularity constants appearing in the adopted assumptions (that is \textbf{(A1)}, \textbf{(A2)} or none).

\section{Generalized Virtual Elements}
\label{sec:VEM}

Let $k \geq 1$. For any $E \in \Omega_h$ the \textbf{standard local virtual element space} \cite{volley} is given by
\begin{equation}
\label{eq:space-s}
V^h_k(E) := \left\{
v_h \in H^1(E) \quad \text{s.t.} \quad v_h|_{\partial E} \in \B_{k}(\partial E), \quad -\Delta v_h \in \Pk_{k-2}(E) 
\right\} \,.
\end{equation}
The idea now is to decouple the polynomial order on the boundary and in the bulk of the element.  Let $\kint$ and $\kbou$ be two positive integers with 
$\kint \geq \kbou$ and let $\kk = (\kint, \, \kbou)$. Note that, although $\kint=\kbou$ is admissible in the following theory, the most interesting case for the present study is $\kint > \kbou$.
For any $E \in \Omega_h$ we define the \textbf{generalized local virtual element space}: 
%
\begin{equation}
\label{eq:space}
\Vem := \left\{
v_h \in H^1(E) \quad \text{s.t.} \quad v_h|_{\partial E} \in \B_{\kbou}(\partial E), \quad -\Delta v_h \in \Pk_{\kint -2}(E) 
\right\} \,.
\end{equation}
%
Using standard tools in VEM literature \cite{volley,autostoppisti} it can be proved that the space $\Vem$  satisfies the following properties:
\begin{itemize}
\item [\textbf{(P1)}] \textbf{Polynomial space inclusion:} $\Pk_{\kbou} \subseteq \Vem$ but in general $\Pkint \not \subseteq \Vem$;
\item [\textbf{(P2)}] \textbf{VEM spaces inclusions:} 
$V^h_{\kbou}(E) \subseteq \Vem \subseteq V^h_{\kint}(E)$;
%
%
%
\item [\textbf{(P3)}] \textbf{Degrees of Freedom:} the following linear operators $\mathbf{D_V}$
(see Figure \ref{fig:VEMdofs}) constitute a set of DoFs for $\Vem$:
\begin{itemize}
\item[$\mathbf{D_V1}$]:  the values of $v_h$ at the vertexes of the polygon $E$,
\item[$\mathbf{D_V2}$]: the values of $v_h$ at $\kbou-1$ distinct points of every edge $e \in \partial E$,
\item[$\mathbf{D_V3}$]: the moments of $v_h$ against a polynomial basis  $\{m_i\}_i$ of $\Pk_{\kint-2}(E)$  with $\|m_i\|_{L^{\infty}(E)} = 1$:
$$
\frac{1}{|E|}\int_E v_h \, m_{i} \, {\rm d}E  \,;
$$
\end{itemize}
\item [\textbf{(P4)}] \textbf{Projections:}
the DoFs $\mathbf{D_V}$ allow us to compute exactly 
\[
\PN \colon \Vem \to \Pkint(E), \qquad
\P0 \colon \Vem \to \Pk_{\kint-2}(E) \,.
\]
\end{itemize}
\begin{figure}[!htb]
\begin{center}
\begin{overpic}[scale=0.16]{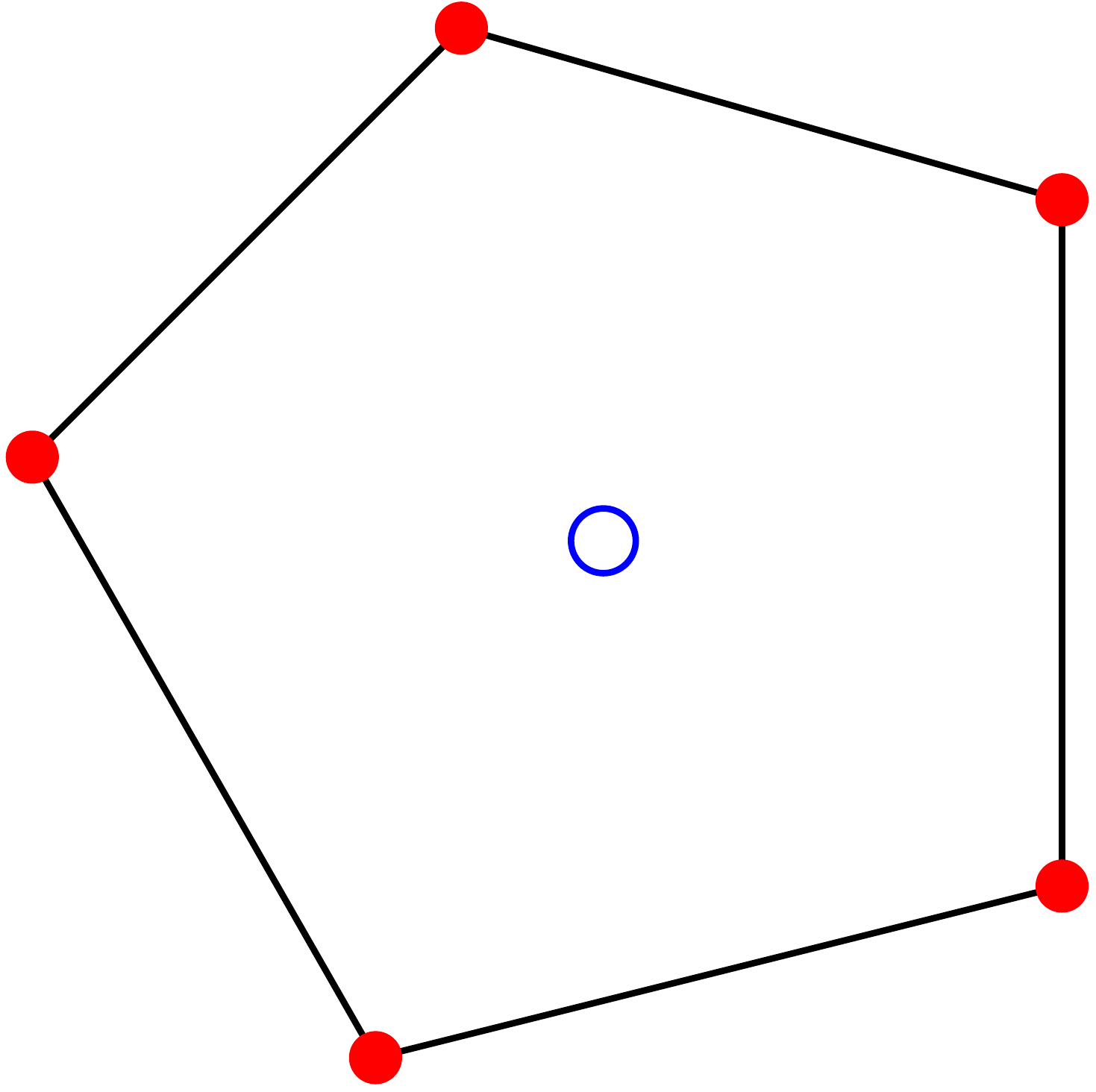}
\put(20,-14) {{$\kint =2$, $\kbou =1$}}
\end{overpic}
\quad
\begin{overpic}[scale=0.16]{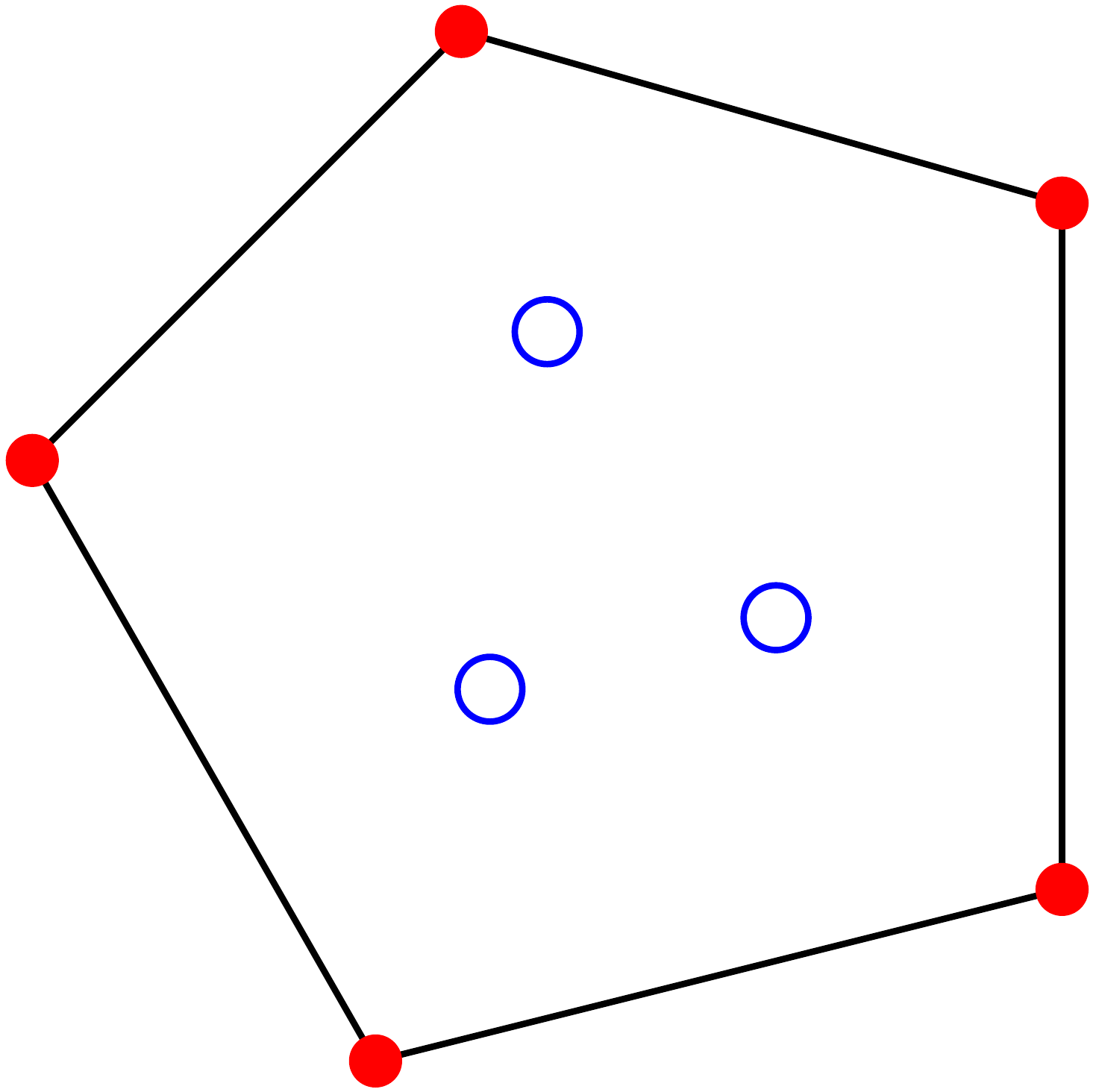}
\put(20,-14) {{$\kint =3$, $\kbou =1$}}
\end{overpic}
\quad
\begin{overpic}[scale=0.16]{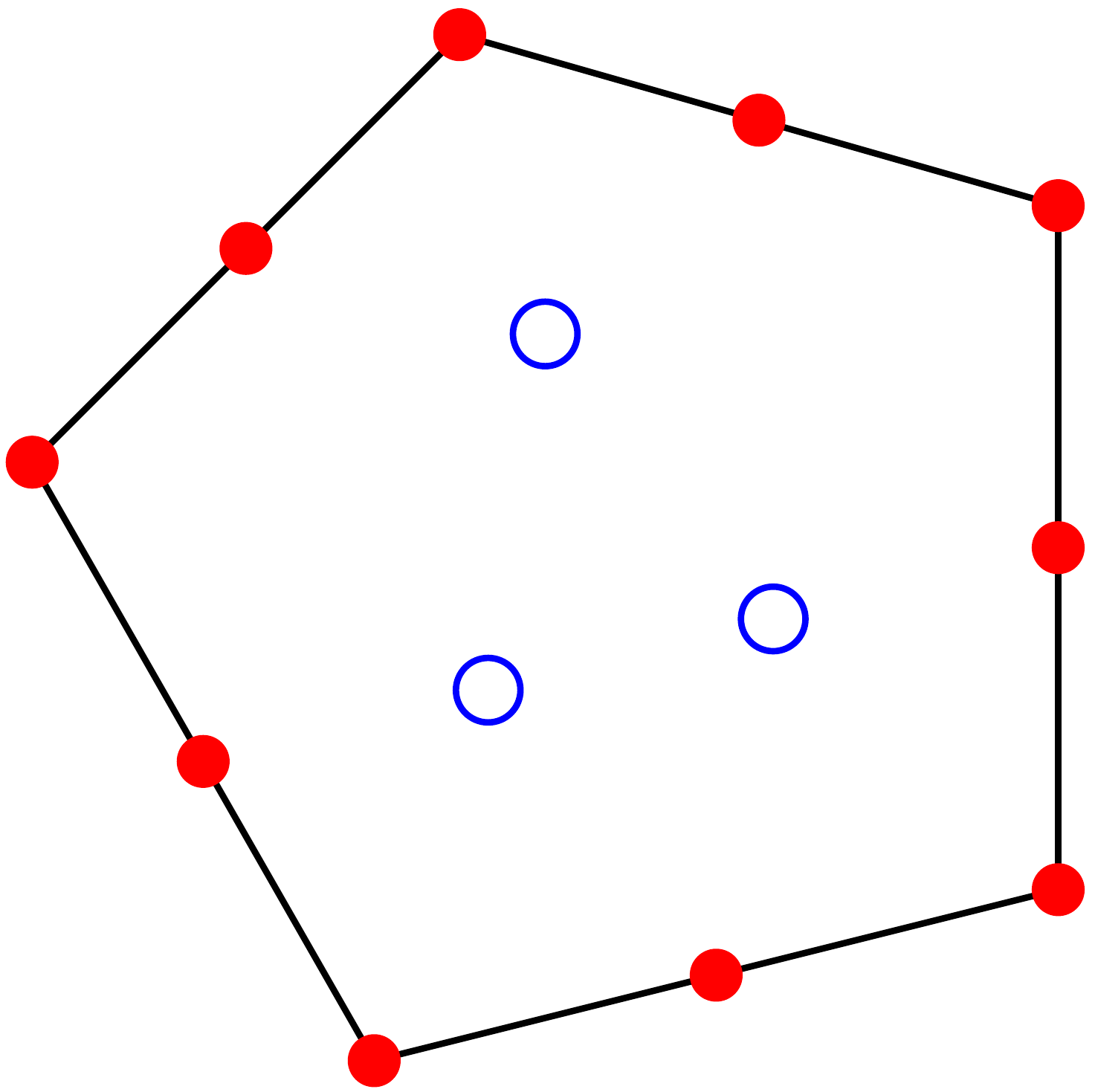}
\put(20,-14) {{$\kint =3$, $\kbou =2$}}
\end{overpic}
\quad
\begin{overpic}[scale=0.16]{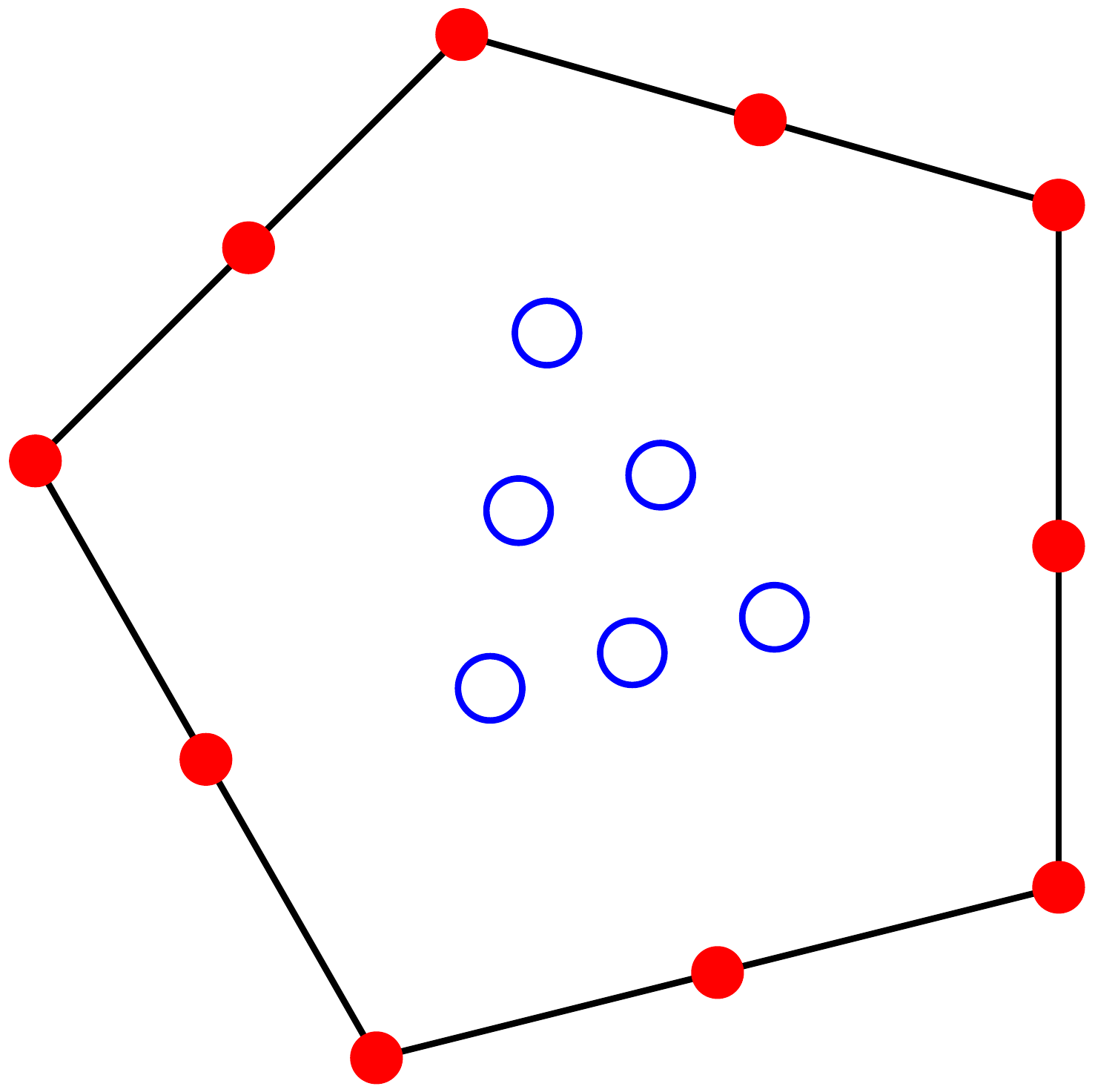}
\put(20,-14) {{$\kint =4$, $\kbou =2$}}
\end{overpic}
\end{center}
\vspace{2ex}
\caption{Example of DoFs for different values of $\kint$ and $\kbou$.}
\label{fig:VEMdofs}
\end{figure}


\begin{remark}
\label{rm:enhanced}
Using the same procedure in \cite{projectors} it would be possible to define the 
``enhanced'' version of the space $\Vem$ such that the ``full'' $L^2$-projection $\Pi_{\kint}^{0,E}$ is computable by the DoFs.
\end{remark}

We define the global virtual element space as
\begin{equation}
\label{eq:space-g}
\VemG := \left\{ v_h \in V  \quad \text{s.t.} \quad v_h|_E \in \Vem \quad \text{for all $E \in \Omega_h$} \right\} \,,
\end{equation}
with the obvious associated sets of global degrees of freedom. 

Finally we remark that the internal degrees of freedom $\mathbf{D_V3}$ can be eliminated from the final linear system by a static condensation procedure, and therefore are much cheaper (form the computational perspective) than the boundary ones.

\subsection{Discrete bilinear forms and load term approximation}
\label{sub:forms}

The next step in the construction of our method is to define a discrete version of the gradient-gradient form $a(\cdot,  \cdot)$ in \eqref{eq:a-c}.
First of all we decompose into local contributions the bilinear form by defining
\[
a(u,  v) =: \sum_{E \in \Omega_h} a^E(u,  v) \,.
\]
It is clear that for an arbitrary pair $(u_h,  v_h) \in \Vem \times \Vem$, the quantity $a^E(u_h,  v_h)$ is not computable since $u_h$ and $v_h$ are not known in closed form.
Therefore, following the usual procedure in the VEM setting, we introduce an approximated discrete bilinear form.
Exploiting the property \textbf{(P4)} and recalling \textbf{(P1)}, let \begin{equation}
\label{eq:ah-E}
a_h^E(\cdot,  \cdot) \colon [\Vem + \Pkint(E)] \times  [\Vem + \Pkint(E)] \to
\R 
\end{equation}
be a computable approximation of the continuous form $a^E(\cdot, \cdot)$ 
defined  by
 \begin{equation}
\label{eq:ah-E-def}
a_h^E(u_h,  v_h) :=
a^E(\PN u_h, \, \PN v_h) +
\Stab((I - \PN) u_h, \, (I - \PN) v_h)
\end{equation}
for all $u_h$, $v_h \in [\Vem + \Pkint(E)]$.
There are different choices for the symmetric stabilizing bilinear form
\begin{equation}
\label{eq:S-E}
\Stab(\cdot,  \cdot) \colon [\Vem + \Pkint(E)] \times  [\Vem + \Pkint(E)] \to
\R \, .
\end{equation}
Noticing that $[ \Vem + \Pkint(E) ] \subseteq V^h_{\kint}(E)$
we here focus on the following two classical ones:
\begin{itemize}
\item \texttt{dofi-dofi} stabilization \cite{volley}:
let $\vec{u}_h$, $\vec{v}_h$ denote the real valued vectors containing the values of the local degrees of freedom associated to $u_h$, $v_h$ in the enlarged space $V^h_{\kint}(E)$ (that correspond to $\mathbf{D_V1}$, $\mathbf{D_V2}$ and $\mathbf{D_V3}$ with $\kbou$ taken equal to $\kint$), then
\begin{equation}
\label{eq:dofidofi}
\Stab_{\texttt{d}} (u_h, v_h) =  \vec{u}_h \cdot  \vec{v}_h \,,
\end{equation}
\item \texttt{trace} stabilization \cite{wriggers:2016}: let $\partial_s v_h$ denote the tangential derivative of $v_h$ along $\partial E$, then
\begin{equation}
\label{eq:trace}
\Stab_{\texttt{t}} (u_h, v_h) =  
 h_E \,  \int_{\partial E} \partial_s u_h \, \partial_s v_h \, {\rm d}s  \,.
\end{equation}
\end{itemize}

\noindent
The global approximated bilinear form $a_h(\cdot,  \cdot)$
is defined by simply summing the local contributions:
\begin{equation}
\label{eq:a_h}
a_h(u_h,  v_h) := \sum_{E \in \Omega_h} a_h^E(u_h,  v_h) 
\qquad \text{for all $u_h, v_h \in [\VemG + \Pkint(\Omega_h)]$.}
\end{equation}
It is  straightforward to check that the bilinear form $a_h(\cdot, \, \cdot)$ satisfies the following:
\begin{itemize}
\item $\boldsymbol{\kint}$\textbf{-consistency property}:
for all $q_{\kint} \in \Pkint(\Omega_h)$ and  $v_h \in [\VemG + \Pkint(\Omega_h)]$
\begin{equation}
\label{eq:a_h-cons}
a_h(q_{\kint},  v_h) = a(q_{\kint},  v_h) \, .
\end{equation} 
\end{itemize}
Concerning the approximation of the right-hand side $( f,  v)$ in \eqref{eq:poisson-c}, we define the approximated load $f_h \in \Pk_{\kint-2}(\Omega_h)$ given by (for $\kint \ge 2$)
\begin{equation}
\label{eq:fh}
{f_h}|_E := \P0 f \qquad \text{for all $E \in \Omega_h$,}
\end{equation}
and define the computable right-hand side
\begin{equation}
\label{eq:right}
(f_h,  v_h) := 
\left \{
\begin{aligned}
&\sum_{E \in \Omega_h} (f_h,  v_h)_E \qquad & \text{for $\kint \geq 2$,}
\\
&\sum_{E \in \Omega_h} \int_E f  {\rm d}E \, \frac{1}{|\partial E|} \int_{\partial E} v_h \, {\rm d}s \qquad & \text{for $\kint = 1$.}
\end{aligned}
\right.
\end{equation}

\subsection{Coercivity of the bilinear form}
\label{sec:stab}

In this section we study the coercivity property of the bilinear form $a_h^E(\cdot,\cdot)$, that is in turn related to the stability term $\Stab(\cdot,\cdot)$. We therefore study the existence of a local positive constant $\alpha_E$ (for all elements $E$) such that
\begin{equation}\label{S:coerc}
\alpha_{E} \ a^E_h(v_h, v_h) \gtrsim a^E(v_h, v_h) 
\qquad \text{for all $v_h \in \Vem$.}
\end{equation}
Note that such condition immediately implies the corresponding global one 
by summing over all elements, with global constant
\begin{equation}\label{eq:alpha}
\alpha := \sup_{E \in \Omega_h} \alpha_{E} \,.
\end{equation}
It is immediate to check that both bilinear forms $\Stab(\cdot,  \cdot)$, cf. \eqref{eq:dofidofi} and \eqref{eq:trace}, are the restriction to $[\Vem + \Pkint(E)]$ of the classical corresponding discrete VEM forms on $V^h_{\kint}(E)$ 
(recalling  that $[\Vem + \Pkint(E)] \subseteq V^h_{\kint}(E)$).
Therefore the coercivity follows from existing results for standard VEM spaces. Since form \eqref{eq:trace} was shown in \cite{BLR:2017,brenner-sung:2018} to guarantee \eqref{S:coerc} with uniform constants, under the assumption $\textbf{(A1)}$ such stabilization yields bound \eqref{S:coerc} with constant $\alpha_{E}$ independent of any other geometric parameter.
\begin{lemma}
\label{lm:alphatrace}
Under assumption \textbf{(A1)}, for the choice \eqref{eq:trace} the bound \eqref{S:coerc} holds with constant $\alpha_{E} = 1$.
\end{lemma}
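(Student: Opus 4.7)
The plan is to decompose $v_h$ via the $\PN$-projector and reduce the coercivity to an inverse-type trace bound that is already established in the literature. By definition \eqref{eq:ah-E-def},
$$a_h^E(v_h, v_h) = a^E(\PN v_h, \PN v_h) + \Stab_{\texttt{t}}((I-\PN)v_h, (I-\PN)v_h),$$
while the $H^1$-orthogonality enforced by the first line of \eqref{eq:Pn_k^E} gives
$$a^E(v_h, v_h) = a^E(\PN v_h, \PN v_h) + |(I-\PN)v_h|^2_{1,E}.$$
The polynomial pieces match verbatim, so the lemma is equivalent to proving
$$\Stab_{\texttt{t}}(w_h, w_h) = \hbulkE \int_{\partial E} |\partial_s w_h|^2 \, ds \gtrsim |w_h|^2_{1,E},$$
where $w_h := (I-\PN) v_h$, which additionally satisfies $\int_{\partial E} w_h \, ds = 0$ by the second line of \eqref{eq:Pn_k^E}.

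Next, property \textbf{(P2)} gives $\Vem \subseteq V^h_{\kint}(E)$, so $w_h$ may be viewed as an element of the standard VEM space of order $\kint$; neither $\Stab_{\texttt{t}}$ nor $\PN$ depend on the particular subspace chosen, only on the boundary trace and on the image degree $\kint$. Thus the reduction above is equivalent to the analogous statement on $V^h_{\kint}(E)$, and it suffices to establish the displayed inverse-trace inequality on that larger space.

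At this point I invoke the sharp coercivity result for the trace stabilization proved in \cite{BLR:2017, brenner-sung:2018}: under assumption \textbf{(A1)} alone, for every $w_h \in V^h_{\kint}(E)$ with vanishing boundary mean the displayed inequality holds with a hidden constant depending only on $\rho$ and on $\kint$ — in particular independent of $\ell_E$, $\hpartialE/\hbulkE$, $\hh_E$, or any other edge-related quantity. This is exactly the bound required, and combining with the reduction yields \eqref{S:coerc} with no compensating factor, i.e. with $\alpha_{E}=1$.

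The main obstacle — fully treated in the cited references — is the inverse-trace inequality itself. The typical strategy is to split $w_h = w_H + w_B$ into a harmonic extension of $w_h|_{\partial E}$ and a bubble $w_B \in H^1_0(E)$ solving $-\Delta w_B = -\Delta w_h \in \Pk_{\kint-2}(E)$. The harmonic piece is controlled via the trace theorem on the star-shaped element $E$ combined with a Poincaré inequality on the closed curve $\partial E$ (of length $\lesssim \hbulkE$ under \textbf{(A1)}), while the polynomial bubble is controlled via Friedrichs on $E$ together with a polynomial inverse estimate. Crucially, none of the involved constants depend on how $\partial E$ is subdivided into edges, which is precisely the reason the trace stabilization delivers $\alpha_E = 1$.
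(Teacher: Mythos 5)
Your overall strategy --- pass to the larger standard space $V^h_{\kint}(E)$ and invoke the stability results of \cite{BLR:2017,brenner-sung:2018} for the trace form under \textbf{(A1)} alone --- is the same as the paper's, and the Pythagoras reduction via the $H^1$-orthogonality of $\PN$ is correct. The problem is the precise statement you then attribute to those references: the inequality $h_E\,\|\partial_s w_h\|^2_{0,\partial E}\gtrsim |w_h|^2_{1,E}$ is \emph{false} for a general $w_h\in V^h_{\kint}(E)$ with vanishing boundary mean. Take $\kint\ge 2$ and let $w_h$ be the interior bubble defined by $-\Delta w_h=1$ in $E$, $w_h=0$ on $\partial E$: it belongs to $V^h_{\kint}(E)$, has zero boundary mean, satisfies $\partial_s w_h\equiv 0$ on $\partial E$, and yet $|w_h|_{1,E}>0$. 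A boundary-only stabilization cannot control the interior of an arbitrary VEM function; what the cited works actually prove is the coercivity bound \eqref{S:coerc} itself, equivalently a lower bound for $w_h$ in the \emph{kernel} of $\PN$ (the range of $I-\PN$), where the orthogonality conditions $\int_E\nabla w_h\cdot\nabla q\,{\rm d}E=0$ for all $q\in\Pkint(E)$ slave the interior Laplacian to the boundary data. Your $w_h=(I-\PN)v_h$ does lie in $\ker\PN$, so the conclusion is salvageable, but ``vanishing boundary mean'' is not the hypothesis under which the inequality holds, and as written your key step rests on a false claim.

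The same issue resurfaces in your closing sketch: after splitting $w_h=w_H+w_B$ with $w_B\in H^1_0(E)$, the quantity $\Stab_{\texttt{t}}(w_h,w_h)$ sees only $w_H|_{\partial E}$, so ``Friedrichs plus a polynomial inverse estimate'' bounds $|w_B|_{1,E}$ in terms of $h_E\|\Delta w_h\|_{0,E}$ but gives you no way to bound $\|\Delta w_h\|_{0,E}$ by the boundary seminorm unless the kernel condition is used --- a step your sketch never closes. The paper sidesteps all of this by observing that $\Stab_{\texttt{t}}$ on $[\Vem+\Pkint(E)]$ is the restriction of the classical trace form on $V^h_{\kint}(E)$ and by citing the full coercivity estimate \eqref{S:coerc} (not an inverse-trace inequality) from \cite{BLR:2017,brenner-sung:2018}. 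You should either do the same, or restate your key inequality for $w_h\in\ker\PN$ and make explicit where that condition enters.
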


The results for the form \eqref{eq:dofidofi} are less favorable, since the results in the literature \cite{BLR:2017, brenner-sung:2018} assume an uniformly bounded number of edges (an assumption that would be unacceptable in the present study). A key role in our analysis is taken by the following lemma; the proof is quite technical and can be found in the Appendix.
\begin{lemma}\label{lem:hum} 
Let $\{ {\cal T}_h \}_h$ denote a family of piecewise quasi-uniform grids, see Definition \ref{pqu}, on intervals $\{ I^h \}_h$. Then it exists a constant $C=C(\overline{m},\overline{c},k)$ such that
$$
| v_h |_{1/2,I^h}^2 \leq C \log(1 + R_h) \sum_{e \in {\cal T}_h} \| v_h \|_{L^{\infty}(e)}^2
\quad \text{for all $v_h \in {\mathbb S}_k({\cal T}_h)$,}
$$
where ${\mathbb S}_k({\cal T}_h)$ denotes the space of continuous piecewise polynomial functions of degree $k$, and where $R_h$ denotes the ratio among the maximum and the minimum element length of ${\cal T}_h$.
\end{lemma}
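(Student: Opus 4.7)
The plan is to decompose the piecewise quasi-uniform mesh into its quasi-uniform sub-pieces, bound each sub-piece's contribution without a log penalty, and carefully handle the cross-interactions across interfaces of disparate mesh size, which is where the log factor arises.

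By Definition \ref{pqu}, $\mathcal{T}_h$ decomposes into at most $\overline{m}$ quasi-uniform sub-meshes $\mathcal{T}_h^{(1)}, \ldots, \mathcal{T}_h^{(M)}$ on disjoint sub-intervals $J_1, \ldots, J_M$ partitioning $I^h$. The double-integral form of the seminorm splits as
\[
|v_h|_{1/2, I^h}^2 = \sum_{m=1}^M |v_h|_{1/2, J_m}^2 + 2\sum_{m < m'} \iint_{J_m \times J_{m'}} \frac{(v_h(x) - v_h(y))^2}{(x-y)^2}\, dx\, dy.
\]
For each diagonal term I establish the log-free bound
\[
|v_h|_{1/2, J_m}^2 \lesssim \sum_{e \in \mathcal{T}_h^{(m)}} \|v_h\|_{L^\infty(e)}^2
\]
by a further element-by-element splitting: same-element contributions are controlled by Markov's inequality on degree-$k$ polynomials; the cross-contribution from two adjacent elements is, after rescaling to $[0,1]$, a bounded quantity on the finite-dimensional space of continuous piecewise polynomials with interior vertex $\tau \in (1/\overline c, 1-1/\overline c)$ (quasi-uniformity of $\mathcal{T}_h^{(m)}$ keeps $\tau$ bounded away from $0$ and $1$); non-adjacent-element pairs have separation $\gtrsim (|i-j|-1)h$, and their contributions are summed via $\sum_n n^{-2}$.

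The cross-terms between distinct sub-intervals are the delicate part. Non-adjacent pairs pose no difficulty thanks to the strictly positive separation between them. For an adjacent pair $J_m, J_{m+1}$ sharing a vertex $x_0$, I subtract the constant $v_h(x_0)$ from $v_h$ (this leaves the seminorm invariant and at most doubles the $L^\infty$ norms on the two elements touching $x_0$) so that the shifted function vanishes at $x_0$. The polynomial inverse inequality on each boundary element $e^\pm$ (of size $h^\pm$) then yields the pointwise estimate
\[
|v_h(x) - v_h(x_0)| \lesssim \frac{|x - x_0|}{h^\pm}\, \|v_h\|_{L^\infty(e^\pm)} \qquad \text{for } x \in e^\pm.
\]
Inserting this into the cross-integral, splitting each integration variable based on membership in the boundary element or outside, and computing the resulting elementary integrals in closed form, the only source of logarithmic growth is the integral $\iint_{e^- \times e^+}(y-x)^{-2}\, dx\, dy$, whose apparent divergence at $x_0$ is tamed by the continuity cancellation at the price of a factor $\log(1 + h^\pm/h^\mp) \leq \log(1+R_h)$. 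The remaining integrals are bounded element-by-element without further log losses.

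The main obstacle is the bookkeeping in this last step: one must track $L^\infty$ norms element by element, carefully combine the pointwise inverse-estimate bounds with the explicit elementary-integral evaluations, and ensure that the log factor depends only on $R_h$ rather than on the number of elements. Summing over the at most $\overline{m}-1$ sub-interval interfaces, together with the diagonal estimates, yields the claimed inequality.
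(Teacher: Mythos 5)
Your argument is correct in substance but follows a genuinely different route from the paper's. The paper never estimates the cross-interactions between distinct quasi-uniform sub-intervals directly: it introduces the auxiliary piecewise-linear function $v_L$ that matches $v_h$ at the sub-interval extrema and vanishes at all other nodes, bounds $|v_L|_{1/2,I^h}^2\le C\log(1+R_h)\|v_h\|_{L^\infty(I^h)}^2$ by the computation of Lemma 6.6 in \cite{BLR:2017}, and observes that $w_h=v_h-v_L$ vanishes at the sub-interval extrema, so that $|v_h|_{1/2,I^h}$ is controlled by $|v_L|_{1/2,I^h}+\sum_n\|w_h\|_{H^{1/2}_{00}(I^n_h)}$; all remaining work is then confined to a single quasi-uniform mesh, handled exactly as your diagonal terms ($\sum n^{-2}$ for separated element pairs, Lipschitz continuity plus a Markov inverse estimate for adjacent ones). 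You instead split the Gagliardo double integral into diagonal and off-diagonal sub-interval blocks and attack the cross-terms by hand; this is more elementary and self-contained, at the price of heavier bookkeeping. Two points need care in the write-up. First, for non-adjacent sub-intervals the positive separation alone does not suffice: bounding the block by $\|v_h\|^2_{L^\infty(J_m)}\iint (x-y)^{-2}$ yields $\log(|J_m|/\mathrm{gap})$, which can behave like $\log(N R_h)$ with $N$ the number of elements; you must distribute element by element, using $\int_{e} d(x,J_{m'})^{-1}\,\mathrm{d}x=\log\bigl(1+h_e/d(e,J_{m'})\bigr)\le\log(1+R_h)$, which works because the gap contains at least one mesh element. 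Second, the logarithm does not in fact originate from $\iint_{e^-\times e^+}(x-y)^{-2}$: once the linear-in-distance bound $|v_h(x)-v_h(x_0)|\lesssim |x-x_0|\,h_\pm^{-1}\|v_h\|_{L^\infty(e^\pm)}$ is used in both variables, that block is bounded with no logarithm at all; the factor $\log(1+h^\pm/h^\mp)$ arises instead from the interaction of the boundary element $e^-$ (equivalently, of the constant $v_h(x_0)$) with the far part of the opposite sub-interval, via $\int_0^{h_-}(s+h_+)^{-1}\,\mathrm{d}s=\log(1+h_-/h_+)$. Both issues are resolved precisely by the element-wise tracking you announce, so your strategy closes; the paper's $v_L$ device simply buys the right not to estimate any cross-term at all.
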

We can now present the following result. 
\begin{lemma}
\label{lm:alphadofi} 
Under assumptions \textbf{(A1)} and \textbf{(A2)}, for the choice \eqref{eq:dofidofi} the bound \eqref{S:coerc} holds with constant $\alpha_{E} = \log(1 + \hh_E)$.
\end{lemma}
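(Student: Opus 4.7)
The plan is to follow the classical dofi–dofi coercivity strategy (as in \cite{BLR:2017,brenner-sung:2018}) but to replace the step where the number of edges of $E$ enters multiplicatively with a sharper estimate based on Lemma \ref{lem:hum}. By property \textbf{(P2)} we have $\Vem \subseteq V^h_{\kint}(E)$, and $\Stab_{\texttt{d}}$ is defined through the DoFs of the enlarged space anyway, so it suffices to prove the bound for a generic $v_h \in V^h_{\kint}(E)$. I would introduce the orthogonal splitting $v_h = v_h^H + v_h^B$, where $v_h^H \in V^h_{\kint}(E)$ is the harmonic function with trace $v_h|_{\partial E}$ and $v_h^B \in V^h_{\kint}(E)$ solves $-\Delta v_h^B = -\Delta v_h$ with zero trace. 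A Green identity gives the $H^1$-orthogonality
$$
|v_h|_{1,E}^2 = |v_h^H|_{1,E}^2 + |v_h^B|_{1,E}^2\,,
$$
reducing the task to bounding each piece by $\Stab_{\texttt{d}}(v_h,v_h) = |\vec v_h|^2$.

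For the harmonic piece, the standard lifting estimate on a star-shaped domain (applicable under \textbf{(A1)} with constant depending only on $\rho$) gives $|v_h^H|_{1,E}^2 \lesssim |v_h|_{1/2,\partial E}^2$. By \textbf{(A2)} the 1D mesh induced by the edges on $\partial E$ is piecewise quasi-uniform with edge ratio $R_h = \hh_E$, and since $v_h|_{\partial E} \in \B_{\kint}(\partial E)$, applying Lemma \ref{lem:hum} (up to identifying the closed curve $\partial E$ with its unwrapped interval) yields
$$
|v_h|_{1/2,\partial E}^2 \lesssim \log(1+\hh_E)\sum_{e \subset \partial E}\|v_h\|_{L^\infty(e)}^2\,.
$$
On each edge $v_h|_e \in \Pk_{\kint}(e)$ is uniquely determined by its values at the $\kint+1$ edge nodes of $\mathbf{D_V1}$--$\mathbf{D_V2}$, so by finite-dimensional norm equivalence on the reference segment $\|v_h\|_{L^\infty(e)}^2 \lesssim \sum_{\nu \in e}|v_h(\nu)|^2$, with constant depending only on $\kint$. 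Summing over edges bounds the right-hand side by the boundary portion of $\Stab_{\texttt{d}}(v_h,v_h)$, giving $|v_h^H|_{1,E}^2 \lesssim \log(1+\hh_E)\,\Stab_{\texttt{d}}(v_h,v_h)$.

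For the bubble piece, Poincar\'e on $E$ (constant controlled by $\rho$ via \textbf{(A1)}) together with integration by parts yields $|v_h^B|_{1,E}^2 \lesssim h_E^2\|\Delta v_h\|_{L^2(E)}^2$. A standard scaling argument under \textbf{(A1)} on the finite-dimensional bubble subspace of $V^h_{\kint}(E)$, which is isomorphic via $-\Delta$ to $\Pk_{\kint-2}(E)$, gives the norm equivalence $|v_h^B|_{1,E}^2 \lesssim \sum_i |d_i(v_h^B)|^2$, where $d_i(\cdot) := \tfrac{1}{|E|}\int_E \cdot\, m_i \,{\rm d}E$ are the internal DoFs of $\mathbf{D_V3}$. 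Splitting $d_i(v_h^B) = d_i(v_h) - d_i(v_h^H)$ and using $|d_i(v_h^H)| \leq \|v_h^H\|_{L^\infty(E)} \leq \|v_h\|_{L^\infty(\partial E)}$ (maximum principle, together with $\|m_i\|_{L^\infty(E)}=1$), combined with the $L^\infty$-to-DoF edge bound already obtained, one controls both contributions by $\Stab_{\texttt{d}}(v_h,v_h)$. Summing the harmonic and bubble estimates and using $\hh_E \geq 1$ then produces the claimed bound with $\alpha_E = \log(1+\hh_E)$.

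The main obstacle is carrying out the harmonic-part analysis without paying a factor in the number of edges: this is precisely what Lemma \ref{lem:hum} delivers, and it is the reason assumption \textbf{(A2)} is imposed. The bubble part yields no log factor because it is insensitive to the edge distribution on $\partial E$; the only subtle point there is the uniformity in \textbf{(A1)} alone (no small-edge or bounded-edge-number condition) of the norm equivalence on the bubble subspace, which follows from star-shapedness via a routine reference-element scaling.
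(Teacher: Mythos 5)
Your overall strategy is sound and reaches the same key point as the paper — namely that Lemma \ref{lem:hum}, available under \textbf{(A2)}, replaces the edge-number-dependent $H^{1/2}$ bound and yields the logarithmic constant — but you get there by a genuinely different route. The paper does not decompose $v_h$ into harmonic and bubble parts explicitly: it proves the single inequality $\sum_e\|w_h\|^2_{L^\infty(e)}\leq C\,\Stab_{\texttt{d}}(w_h,w_h)$, feeds it into Lemma \ref{lem:hum} (applied to $w_h-\mathcal{R}w_h$), and then invokes Proposition 3.6 of \cite{BLR:2017} as a black box to convert the resulting boundary $H^{1/2}$ control into $a^E(w_h,w_h)\lesssim\log(1+\hh_E)\,\Stab_{\texttt{d}}(w_h-\mathcal{R}w_h,w_h-\mathcal{R}w_h)+|\PN w_h|^2_{1,E}$. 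Your harmonic/bubble splitting essentially re-proves that proposition from scratch; what it buys is a self-contained argument, at the price of having to justify two nontrivial uniform constants under \textbf{(A1)} alone: the harmonic-lifting bound $|v_h^H|_{1,E}\lesssim|v_h|_{1/2,\partial E}$ (together with the identification of the seminorm on the closed curve $\partial E$ with that on the unwrapped interval, a point the paper also glosses over), and, more delicately, the norm equivalence on the bubble subspace. The latter is not a ``routine reference-element scaling'': there is no fixed reference element for the class of star-shaped polygons, and the estimate $|v^B_h|^2_{1,E}\lesssim\sum_i|d_i(v^B_h)|^2$ hinges on the inverse estimate $\|\Delta v^B_h\|_{0,E}\lesssim h_E^{-1}|v^B_h|_{1,E}$ for virtual bubble functions, which you should prove or cite explicitly (it is available in the small-edge literature you invoke).

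The one genuine logical gap is the final step. You establish $|v_h|^2_{1,E}\lesssim\log(1+\hh_E)\,\Stab_{\texttt{d}}(v_h,v_h)$ for $v_h\in V^h_{\kint}(E)$ and declare that this ``produces the claimed bound'', but \eqref{S:coerc} requires control by $a_h^E(v_h,v_h)=|\PN v_h|^2_{1,E}+\Stab_{\texttt{d}}((I-\PN)v_h,(I-\PN)v_h)$, and $\Stab_{\texttt{d}}(v_h,v_h)$ is not bounded by $a_h^E(v_h,v_h)$ (take $v_h$ a nonzero constant). You must apply your main estimate to $w_h=(I-\PN)v_h\in V^h_{\kint}(E)$ and combine it with $|v_h|^2_{1,E}\leq2|\PN v_h|^2_{1,E}+2|(I-\PN)v_h|^2_{1,E}$, exactly as the paper does in its closing display. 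Since your estimate is stated for the whole enlarged space, the repair is immediate, but as written the conclusion does not follow.
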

\begin{proof}
To avoid repetition of previously published material, we present the proof briefly, referring to results in the existing literature. Essentially, as introduced in \cite{BLR:2017}, the main step in proving the local coercivity \eqref{S:coerc} is showing that the boundary norm associated to $\Stab_{\texttt{d}}(\cdot,  \cdot)$ controls the $H^{1/2}(\partial E)$ seminorm for any function $v_h$ in the local VEM space $V^h_{\kint}(E)$. It is immediate to check that, for the choice \eqref{eq:dofidofi}, it holds
$$
\sum_{e \in {\cal T}_h} \| w_h \|_{L^{\infty}(e)}^2 \leq C \, \Stab_{\texttt{d}}(w_h,w_h)
\qquad \text{for all $w_h \in V^h_{\kint}(E)$,}
$$
with $C=C(\kbou)$. We now combine the above bound with Lemma \ref{lem:hum}, and apply it to the function $w_h - {\cal R} w_h$ with ${\cal R}$ the projection operator on $\Pk_0(E)$ given by the boundary average (${\cal R} w = \frac{1}{|\partial E|} \int_{\partial E} w$ for all $w \in H^1(E)$).
We obtain 
\begin{equation}\label{falcao}
|w_h |_{1/2,I^h}^2 = | w_h - {\cal R} w_h |_{1/2,I^h}^2 
\leq C \log(1 + \mathcal{H}_E) \, \Stab_{\texttt{d}}(w_h - {\cal R} w_h , w_h - {\cal R} w_h) \, ,
\end{equation}
for all $w_h \in V^h_{\kint}(E)$, which is exactly the boundary norm control mentioned above. 
Bound \eqref{falcao} allows to apply Proposition 3.6 in \cite{BLR:2017}, yielding (for all $w_h \in V^h_{\kint}(E)$)
$$  
a^E(w_h,w_h) \lesssim \log(1 + \mathcal{H}_E) \, \Stab_{\texttt{d}}(w_h - {\cal R} w_h , w_h - {\cal R} w_h) + |\PN w_h|_{1,E}^2  \, .
$$
By applying the above bound to  $w_h = v_h - \PN v_h$, for any $v_h \in V^h_{\kint}(E)$, we get
\begin{equation}\label{nakata} 
a^E(v_h - \PN v_h,v_h - \PN v_h) \lesssim \log(1 + \mathcal{H}_E) \, \Stab_{\texttt{d}}(v_h - \PN v_h, v_h - \PN v_h) \ .
\end{equation}
The result now follows immediately using a triangle inequality, bound \eqref{nakata} and definition \eqref{eq:ah-E-def}
$$
\begin{aligned}
a^E(v_h,v_h) &\leq a^E(\PN v_h,\PN v_h) + a^E(v_h - \PN v_h,v_h - \PN v_h) \\
& \lesssim \log(1 + \mathcal{H}_E) \, a^E_h (v_h,v_h) \, ,
\end{aligned}
$$
for all $v_h \in V^h_{\kint}(E)$.
\end{proof}


\subsection{Virtual element problem}
\label{sub:vemp}

Referring to the discrete space \eqref{eq:space-g}, the discrete bilinear form \eqref{eq:a_h}  and the approximated right-hand side \eqref{eq:right}, the virtual element approximation of the Poisson equation \eqref{eq:poisson-c} is 
\begin{equation}
\label{eq:poisson-vem}
\left \{
\begin{aligned}
& \text{find $u_h \in \VemG$ s.t.} 
\\
& a_h(u_h, \, v_h) = ( f_h, \, v_h) \qquad \text{for all $v_h \in \VemG$.}
\end{aligned}
\right.
\end{equation}
From \eqref{eq:alpha} it follows that Problem \eqref{eq:poisson-vem} has a unique solution $u_h \in \VemG$ such that 
$|u_h|_{1, \Omega} \lesssim \alpha \, \|f\|_{V^*}$.


\section{Convergence analysis}
\label{sec:convergence}

In this section we prove the interpolation estimates for the  virtual space $\VemG$ in \eqref{eq:space-g} and provide the error estimates for the solution of the discrete problem \eqref{eq:poisson-vem}.
All estimates are designed in order to distinguish the element interior and boundary contributions to the error, in terms of $h_E,h_{\partial E}, \kint,\kbou$.
We start by reviewing classical approximation result for polynomials on star-shaped domains, see for instance \cite{brenner-scott:book}.

\begin{lemma}[Bramble-Hilbert]
\label{lm:bramble}
Under the assumption \textbf{(A1)}, let two real non-negative numbers $r$, $s$ with $r \leq s \leq \kint + 1$. Then for all $v \in V \cap H^s(\Omega_h)$ there exists $v_{\pi} \in \Pk_{\kint}(\Omega_h)$ such that
 \[
 |v - v_{\pi}|_{\Omega_h,r} \lesssim \hbulk^{s - r} \, |v|_{\Omega_h,s} \,.
 \]
Moreover if $s>1$ then
 \[
 \|v - v_{\pi}\|_{L^{\infty}(\Omega)} \lesssim \hbulk^{s - 1} \, |v|_{\Omega_h,s} \,.
 \]
\end{lemma}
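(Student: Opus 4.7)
The plan is to work element-by-element and invoke the classical Bramble–Hilbert machinery for star-shaped domains, then sum the local estimates to get the global broken-norm bound. Fix $E \in \Omega_h$. Because of assumption \textbf{(A1)}, $E$ is star-shaped with respect to a ball $B_E$ of radius $\geq \rho\, h_E$, so $E$ satisfies a uniform chunkiness condition. On such a domain the averaged Taylor polynomial of degree $\kint$ of $v$ relative to $B_E$ is well-defined and gives a projector $Q_{\kint}^E : H^1(E) \to \Pk_{\kint}(E)$ that reproduces polynomials in $\Pk_{\kint}(E)$. Setting
\[
(v_\pi)|_E := Q_{\kint}^E (v|_E),
\]
one obtains $v_\pi \in \Pk_{\kint}(\Omega_h)$.

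For the Sobolev estimate, I would proceed by the standard scaling argument: rescale $E$ to a reference configuration $\widehat E$ of unit diameter (still star-shaped with respect to a ball of radius $\geq \rho$), apply the Bramble–Hilbert lemma on $\widehat E$ to the transformed function $\widehat v - Q_{\kint}^{\widehat E}\widehat v$ (which vanishes on $\Pk_{\kint}$), and then scale back. The chunkiness parameter $\rho$ being uniform makes the hidden constant in the Bramble–Hilbert inequality independent of $E$. This yields, for $0\le r\le s\le \kint+1$,
\[
|v - v_\pi|_{r,E} \lesssim h_E^{s-r}\, |v|_{s,E},
\]
and squaring and summing over $E \in \Omega_h$ (after bounding $h_E \le h$) gives the desired broken-norm estimate
\[
|v - v_\pi|_{r,\Omega_h} \lesssim h^{s-r}\, |v|_{s,\Omega_h}.
\]

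For the $L^\infty$ bound, assuming $s>1$ one has the Sobolev embedding $H^s(\widehat E) \hookrightarrow L^\infty(\widehat E)$ on the reference element (valid in two dimensions as soon as $s>1$). Applying this embedding to $\widehat v - Q_{\kint}^{\widehat E}\widehat v$ together with the Bramble–Hilbert control of its $H^s$ seminorm, and then scaling back to $E$ (scaling in the $L^\infty$ norm introduces no factor, while scaling of derivatives produces the $h_E^{s-1}$ factor), one obtains on each element
\[
\|v - v_\pi\|_{L^\infty(E)} \lesssim h_E^{s-1}\, |v|_{s,E} \le h^{s-1}\, |v|_{s,\Omega_h}.
\]
Taking the supremum over $E \in \Omega_h$ yields the global $L^\infty$ estimate.

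The only mildly subtle points are ensuring that the constant provided by Bramble–Hilbert depends only on $\rho$ (which is why one works on the rescaled reference element and invokes star-shapedness in the standard form), and handling the continuity of $v_\pi$ across element boundaries---but note that $v_\pi$ lives in the broken space $\Pk_{\kint}(\Omega_h)$ in the statement, so no inter-element matching is required. The $L^\infty$ part requires the dimension-dependent restriction $s>1$ precisely because $H^1(E)$ does not embed into $L^\infty(E)$ in two dimensions, and this is the only place the hypothesis $s>1$ is used.
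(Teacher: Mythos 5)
Your argument is correct and is exactly the classical averaged-Taylor-polynomial/scaling proof from Brenner--Scott that the paper itself invokes: the paper gives no proof of this lemma, simply citing it as a standard approximation result on star-shaped domains. Your treatment of the $L^\infty$ part via the embedding $H^s(\widehat E)\hookrightarrow L^\infty(\widehat E)$ for $s>1$ and the observation that no inter-element continuity is needed are both consistent with the intended reading of the statement.
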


\subsection{Interpolation estimates}
\label{sub:interpolation}

%

In order to obtain clearer results, in the following proposition we assume ``maximum'' regularity of the target function (that is $v \in H^{\kint+1}(\Omega_h)$). 
Analogous results for $H^s(\Omega_h)$, $s \in (1,\kint+1)$, could be obtained by a more cumbersome argument involving space interpolation theory.

\begin{proposition}
\label{prp:interpolation}
Under the assumption \textbf{(A1)}, there exists a linear operator $\mathcal{I}_h \colon [V \cap H^s(\Omega_h)] \to \VemG$, with $s >1$, such that
$$
| v - \mathcal{I}_h v |_{1,E} \lesssim 
h_E^{\kint} \, |v|_{\kint+1,E} 
+  h_{\partial E}^{\kbou} \, |v|_{\kbou+1,E} \,, 
$$
for all $E \in \Omega_h$.
\end{proposition}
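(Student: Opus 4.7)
\medskip
\noindent\textbf{Plan.} I define the global interpolant $\mathcal{I}_h v \in \VemG$ by matching, element by element, the degrees of freedom $\mathbf{D_V1}$--$\mathbf{D_V3}$ with those of $v$. On $\partial E$ this makes $\mathcal{I}_h v|_{\partial E}$ the continuous piecewise polynomial of degree $\kbou$ that Lagrange-interpolates $v$ at the vertices and at the $\kbou-1$ internal edge nodes, while inside $E$ the moments against $\Pk_{\kint-2}(E)$ coincide with those of $v$. Fix $E$, set $\psi := v - \mathcal{I}_h v$, and split $\psi = \psi_\partial + \psi_o$, with $\psi_\partial \in H^1(E)$ the harmonic lifting of $\psi|_{\partial E}$ (so $\Delta \psi_\partial = 0$ in $E$, $\psi_\partial = \psi$ on $\partial E$) and $\psi_o := \psi - \psi_\partial \in H^1_0(E)$. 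Since $\int_E \nabla \psi_\partial \cdot \nabla \psi_o = 0$ by integration by parts, one has the Pythagorean identity $|\psi|_{1,E}^2 = |\psi_\partial|_{1,E}^2 + |\psi_o|_{1,E}^2$, which lets me estimate the two pieces independently.

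\medskip
\noindent\textbf{Boundary piece.} Under assumption \textbf{(A1)}, continuous dependence of the harmonic extension on its Dirichlet data yields $|\psi_\partial|_{1,E} \lesssim \|\psi\|_{H^{1/2}(\partial E)}$. Because $\psi|_{\partial E}$ is precisely the edgewise univariate Lagrange interpolation error of degree $\kbou$ applied to $v|_{\partial E}$, combining the standard 1D polynomial approximation estimates on each edge with a suitable trace inequality and the shape-regularity in \textbf{(A1)} delivers
$$
|\psi_\partial|_{1,E} \lesssim h_{\partial E}^{\kbou}\, |v|_{\kbou+1, E}.
$$

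\medskip
\noindent\textbf{Interior piece.} The crucial structural facts for $\psi_o$ are $-\Delta \mathcal{I}_h v \in \Pk_{\kint-2}(E)$, built into the definition of $\Vem$, and the moment orthogonality $\int_E \psi\, q = 0$ for every $q \in \Pk_{\kint-2}(E)$, coming from $\mathbf{D_V3}$. Writing $q^{\star} := -\Delta \mathcal{I}_h v$, one has $-\Delta \psi_o = -\Delta v - q^{\star}$ in $L^2(E)$; testing $|\psi_o|_{1,E}^2 = \int_E(-\Delta v - q^\star)\,\psi_o$, inserting any $\bar q \in \Pk_{\kint-2}(E)$, and invoking the moment identity $\int_E m\,\psi_o = -\int_E m\,\psi_\partial$ for every $m \in \Pk_{\kint-2}(E)$ produces
$$
|\psi_o|_{1,E}^2 \;=\; \int_E (-\Delta v - \bar q)\, \psi_o \;+\; \int_E (q^{\star} - \bar q)\, \psi_\partial.
$$
Choosing $\bar q$ as the Bramble--Hilbert $L^2$-approximant of $-\Delta v$ in $\Pk_{\kint-2}(E)$ (Lemma \ref{lm:bramble}), the first term is controlled by $h_E^{\kint}\, |v|_{\kint+1,E}\, |\psi_o|_{1,E}$ via Poincar\'e in $H^1_0(E)$, while the cross term is absorbed into the boundary estimate by bounding $\|q^{\star}-\bar q\|_{0,E}$ via duality on $\Pk_{\kint-2}(E)$ together with $\|\psi_\partial\|_{0,E} \lesssim h_E\, |\psi_\partial|_{1,E}$. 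This produces the bulk contribution $h_E^{\kint}|v|_{\kint+1,E}$, and combining with the boundary piece yields the claim.

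\medskip
\noindent\textbf{Main obstacle.} The delicate step is the interior bound: $\Delta \mathcal{I}_h v$ is only implicitly defined through the local VEM problem, and the clean cancellation of its polynomial part against a Bramble--Hilbert approximant of $-\Delta v$ relies precisely on the $\Pk_{\kint-2}$-moment orthogonality from $\mathbf{D_V3}$. This is what upgrades the bulk rate from $h_E^{\kbou}$, which a naive argument based only on the polynomial inclusion $\Pk_{\kbou} \subseteq \Vem$ from \textbf{(P1)} would yield, to the full $h_E^{\kint}$ and realises the boundary/bulk separation advertised by the proposition.
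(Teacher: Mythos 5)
Your overall architecture --- the $H^1$-orthogonal splitting of $v-\mathcal{I}_h v$ into a harmonic part carrying the boundary data and an $H^1_0(E)$ part, with the boundary piece estimated by edgewise 1D interpolation plus a trace inequality on sub-triangles furnished by \textbf{(A1)} --- coincides with the paper's. The difference is the choice of interpolant: you take the canonical DoF interpolant (matching the moments against $\Pk_{\kint-2}(E)$), whereas the paper prescribes the interior part directly by $\Delta\mathcal{I}_h v=\Pi^{0,E}_{\kint-2}\Delta v$ with boundary datum $v_b$. With the paper's choice the interior error solves a Poisson problem with \emph{known} right-hand side $(I-\Pi^{0,E}_{\kint-2})\Delta v$ and zero boundary data, so its $H^1$ seminorm equals $\|(I-\Pi^{0,E}_{\kint-2})\Delta v\|_{-1,E}\lesssim h_E^{\kint}|v|_{\kint+1,E}$ in one line. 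Your choice forces you to handle the implicitly defined polynomial $q^{\star}=-\Delta\mathcal{I}_h v$, and that is where your argument has a genuine gap.

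Two steps in your treatment of the cross term $\int_E(q^{\star}-\bar q)\,\psi_\partial$ do not hold as stated. First, $\|\psi_\partial\|_{0,E}\lesssim h_E|\psi_\partial|_{1,E}$ is not a Poincar\'e inequality you are entitled to: $\psi_\partial$ is neither in $H^1_0(E)$ nor mean-free, and a harmonic function with small gradient can still have $L^2$ norm of order $|E|^{1/2}$ times the size of its boundary values. You must instead bound $\|\psi_\partial\|_{0,E}$ through its Dirichlet data (maximum principle, or $\|w\|_{0,E}^2\lesssim h_E\|w\|_{0,\partial E}^2+h_E^2|w|_{1,E}^2$), which does deliver the needed $h_E\, h_{\partial E}^{\kbou}|v|_{\kbou+1,E}$. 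Second, ``duality on $\Pk_{\kint-2}(E)$'' does not by itself control $\|q^{\star}-\bar q\|_{0,E}$, because $q^{\star}$ is only accessible through the variational identity $\int_E q^{\star}\phi=\int_E\nabla\mathcal{I}_h v\cdot\nabla\phi$ for $\phi\in H^1_0(E)$; one needs an inf-sup (bubble-function) argument pairing $\Pk_{\kint-2}(E)$ with $H^1_0(E)$, which yields $\|q^{\star}-\bar q\|_{0,E}\lesssim h_E^{-1}|v-\mathcal{I}_h v|_{1,E}+h_E^{\kint-1}|v|_{\kint+1,E}$, and the resulting term $h_E^{-1}|v-\mathcal{I}_h v|_{1,E}\,\|\psi_\partial\|_{0,E}$ must then be re-absorbed by Young's inequality using the already-established boundary bound. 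The argument can be repaired along these lines and your statement is ultimately correct, but as written one ingredient is false and the other is missing; the paper's non-canonical interpolant is chosen precisely so that this entire difficulty never arises.
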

\begin{proof}
Let $v \in V \cap H^s(\Omega_h)$. 
On each element $E \in \Omega_h$ we consider the function 
$\mathcal{I}_h v$ defined by
\begin{equation}
\label{eq:Ih}
\left \{
\begin{aligned}
\Delta  \mathcal{I}_h v &= \Pi^{0,E}_{\kint -2}\Delta  v \quad & \text{in $E$,}
\\
\mathcal{I}_h v &= v_{b}  \quad & \text{on $\partial E$,}
\end{aligned}
\right.
\end{equation}
where $v_b$ is the standard 1D piecewise polynomial interpolation of $v|_{\partial E}$.
Therefore the interpolation error can be decomposed as
\begin{equation}
\label{eq:split}
v - \mathcal{I}_h v = \vint + \vp
\end{equation}
where
\begin{equation}
\label{eq:vv}
\left \{
\begin{aligned}
\Delta  \vint &= (I-\Pi^{0,E}_{\kint -2})\Delta  v \quad & \text{in $E$,}
\\
\vint &= 0  \quad & \text{on $\partial E$,}
\end{aligned}
\right.
\qquad \text{and} \qquad
\left \{
\begin{aligned}
\Delta  \vp &= 0 \quad & \text{in $E$,}
\\
\vp &= v - v_{b}  \quad & \text{on $\partial E$.}
\end{aligned}
\right.
\end{equation}
Notice that the splitting \eqref{eq:split} is $H^1$-orthogonal, i.e.
\begin{equation}
\label{eq:orth}
|v - \mathcal{I}_h v|_{1,E}^2 = |\vint|_{1,E}^2 + |\vp|_{1,E}^2 \,.
\end{equation}
For the first term, by equation \eqref{eq:vv}, classical stability results for the Poisson problem and Lemma \ref{lm:bramble}, we obtain
\begin{equation}
\label{eq:vint}
|\vint|_{1,E} = \|(I - \Pi^{0,E}_{\kint - 2})\Delta v\|_{-1,E} \lesssim h_E^{\kint} \, |\Delta v|_{\kint-1, E}
 \lesssim h_E^{\kint} \, | v|_{\kint+1, E} \,.
\end{equation}
Concerning the boundary term, again classical stability bounds  and standard polynomial interpolation results in one dimension yield
\begin{equation}\label{eq:L:1}
|\vp|_{1,E}^2 \lesssim |v - v_{b}|_{1/2, \partial E}^2 \lesssim h_{\partial E}^{2\kbou} 
\sum_{e \in \partial E} |v|^2_{\kbou+1/2,e} \, .
\end{equation}
It is immediate to check that, due to \textbf{(A1)}, for each edge $e \in \partial E$ it exists a triangle $T_e \subset E$ and all such triangles are disjoint and shape regular, uniformly in $E \in \Omega_h$ and $e \in \partial E$. Therefore if we apply a standard trace estimate on each of such triangles, from \eqref{eq:L:1} we obtain
$$
|v - v_{b}|_{1/2, \partial E}^2 
\lesssim h_{\partial E}^{2\kbou} \sum_{e \in \partial E} |v|^2_{\kbou+1,T_e}
\leq  h_{\partial E}^{2\kbou} |v|^2_{\kbou+1,E} \, .
$$
The above bound, combined with \eqref{eq:orth}, \eqref{eq:vint} and \eqref{eq:L:1} concludes the proof.
\end{proof}

Assuming additional (piecewise) regularity of the target function, another useful interpolation result can be obtained.

\begin{corollary}
\label{cor:interpolation}
Under the assumption \textbf{(A1)}, it exists a linear operator $\mathcal{I}_h \colon [V \cap H^s(\Omega_h)] \to \VemG$, with $s>1 $, such that
\[
|v - \mathcal{I}_h v |_{1,E} \lesssim 
h_E^{\kint} \, |v|_{\kint+1,E} 
+  h_{\partial E}^{\kbou+1/2} h_E^{-1/2} \, |v|_{\kbou+1,E}
+  h_{\partial E}^{\kbou+1/2} h_E^{1/2} \, |v|_{\kbou+2,E} \,, 
\]
for all $E \in \Omega_h$.
\end{corollary}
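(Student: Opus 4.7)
The plan is to reuse the proof of Proposition~\ref{prp:interpolation} verbatim for the interpolation operator $\mathcal{I}_h$, the orthogonal splitting $v-\mathcal{I}_h v=\vint+\vp$, and the bulk bound $|\vint|_{1,E}\lesssim h_E^{\kint}|v|_{\kint+1,E}$. The entire task thus reduces to sharpening the boundary estimate $|\vp|_{1,E}^2\lesssim|v-v_b|_{1/2,\partial E}^2$ so as to trade one power $h_{\partial E}^{1/2}$ for a factor $h_E^{-1/2}$ (at the price of an extra Sobolev order), as the second term of the corollary makes apparent.

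Instead of the per-edge $H^{\kbou+1/2}(e)$ route of Proposition~\ref{prp:interpolation}, I would use a multiplicative interpolation inequality on the closed curve $\partial E$,
\[
|w|^2_{1/2,\partial E}\;\lesssim\;\|w\|_{L^2(\partial E)}\,\|w\|_{H^1(\partial E)},
\]
where $H^1(\partial E)$ is defined via the (piecewise) tangential derivative. This is legitimate for $w=v-v_b$ because $v_b$ agrees with $v$ at every vertex, so $v-v_b$ is globally continuous on $\partial E$. I then bound each factor by standard 1D polynomial interpolation on each edge,
\[
\|v-v_b\|^2_{L^2(e)}\lesssim h_e^{2\kbou+2}|v|^2_{\kbou+1,e}, \qquad |v-v_b|^2_{1,e}\lesssim h_e^{2\kbou}|v|^2_{\kbou+1,e},
\]
and use $h_e\leq h_{\partial E}$ after summation to obtain $|v-v_b|^2_{1/2,\partial E}\lesssim h_{\partial E}^{2\kbou+1}\sum_{e}|v|^2_{\kbou+1,e}$, absorbing the low-order tail.

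The last step converts the sum of edge seminorms into volume seminorms on $E$. Applying the scaled $L^2$-trace inequality on $E$ to $\nabla^{\kbou+1}v$ yields, under \textbf{(A1)},
\[
\sum_{e\in\partial E}|v|^2_{\kbou+1,e}\;\leq\;\|\nabla^{\kbou+1}v\|^2_{L^2(\partial E)}\;\lesssim\;h_E^{-1}|v|^2_{\kbou+1,E}+h_E|v|^2_{\kbou+2,E},
\]
so that $|\vp|^2_{1,E}\lesssim h_{\partial E}^{2\kbou+1}(h_E^{-1}|v|^2_{\kbou+1,E}+h_E|v|^2_{\kbou+2,E})$; taking a square root and combining with the bulk estimate produces the claim. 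The main technical obstacle I foresee is the need for both continuous inequalities to carry constants depending only on the star-shapedness parameter $\rho$ of \textbf{(A1)} and not on the number of edges $\ell_E$ or on the ratio $\hh_E$: the former is obtained by rescaling $\partial E$ to a fixed-length curve and using Poincar\'e-type normalisation for the mean-free part of $v-v_b$, while the latter follows from the standard scaled trace theorem on a Lipschitz domain whose shape regularity is encoded in $\rho$.
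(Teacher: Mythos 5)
Your proposal is correct and follows essentially the same route as the paper: the same harmonic splitting and bulk bound from Proposition \ref{prp:interpolation}, the 1D interpolant pushed to its full order $h_{\partial E}^{2\kbou+1}\sum_e|v|^2_{\kbou+1,e}$, and the same scaled trace inequality (Lemma 6.4 of \cite{BLR:2017}, i.e. \eqref{eq:trace_s}) to convert edge seminorms into $h_E^{-1}|v|^2_{\kbou+1,E}+h_E|v|^2_{\kbou+2,E}$. The only difference is that you justify the passage from the global $H^{1/2}(\partial E)$ seminorm to per-edge quantities explicitly via the multiplicative $L^2$--$H^1$ interpolation inequality, a step the paper leaves implicit under ``standard polynomial interpolation results in one dimension''; this is a legitimate (and arguably more careful) way to handle the non-additivity of $|\cdot|_{1/2,\partial E}$ over edges.
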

\begin{proof}
One follows the same steps as in the proof of Proposition \ref{prp:interpolation}, but the interpolation in \eqref{eq:L:1} is now stretched to its maximum reach in terms of polynomial approximation
\begin{equation}\label{eq:L:1bis}
|\vp|_{1,E}^2 \lesssim |v - v_{b}|_{1/2, \partial E}^2 \lesssim h_{\partial E}^{2\kbou + 1} 
\sum_{e \in \partial E} |v|^2_{\kbou+1,e} =  h_{\partial E}^{2\kbou + 1}  |v|_{\kbou+1, \partial E}^2\, .
\end{equation}
We then bound the $H^{\kbou+1}$ seminorm of $v$ on each edge $e$ by the $L^2(e)$ norm of the corresponding multi-index derivative matrix $D^{\kbou+1}v$ in 2D. Afterwards, by applying Lemma 6.4 in \cite{BLR:2017} we get
\begin{equation}
\label{eq:trace_s}
|v|_{\kbou+1, \partial E}^2  \lesssim h_E^{-1} \| D^{\kbou+1}v \|_{0,E}^2 
+ h_E | D^{\kbou+1}v|_{1,\partial E}^2 =  h_E^{-1} | v |_{\kbou+1,E}^2 
+ h_E | v |_{\kbou+2, E}^2  \, .
\end{equation} 
Therefore we obtain from \eqref{eq:L:1bis}
\[
|\vp|_{1,E}^2 
\lesssim h_{\partial E}^{2\kbou + 1} \big( h_E^{-1} | v |_{\kbou+1,E}^2 
+ h_E | v |_{\kbou+2, E}^2 \big) \, .
\]
The above bound, combined with  \eqref{eq:vint} and \eqref{eq:orth}  concludes the proof.
\end{proof}

\begin{remark} [$L^2$-interpolation estimate]
\label{rm:interpolation}
The Poincar\'e inequality and classical polynomial approximation result in 1D 
imply
\[
\begin{aligned}
\|v - \mathcal{I}_h v \|_{0,E} & \lesssim 
\int_{\partial E} |v - \mathcal{I}_h v| \, {\rm d}s + 
h_E |v - \mathcal{I}_h v |_{1,E} 
\\
& \lesssim 
h_{E}^{1/2} \|v - v_b\|_{0, \partial E} + 
h_E |v - \mathcal{I}_h v |_{1,E} 
\\
& \lesssim 
h_{E}^{1/2} h_{\partial E}^{\kbou+1} |v|_{\kbou+1, \partial E} + 
h_E |v - \mathcal{I}_h v |_{1,E}  \,.
\end{aligned}
\]
Therefore the bound above, bound \eqref{eq:trace_s} and Corollary \ref{cor:interpolation} entail the following $L^2$-interpolation estimate (recall also that $h_{\partial E} \le h_E$)
\begin{equation}
\label{eq:L2-int}
\|v - \mathcal{I}_h v\|_{0,E} \lesssim
h_E^{\kint+1} \, |v|_{\kint+1,E} 
+  h_{\partial E}^{\kbou+1/2} h_E^{1/2} \, |v|_{\kbou+1,E}
+  h_{\partial E}^{\kbou+1/2} h_E^{3/2} \, |v|_{\kbou+2,E} \,.
\end{equation}
\end{remark}

\begin{remark} [$L^\infty$-boundary estimate]
\label{rm:Linf-bound}
Combining standard one dimensional $L^\infty$ interpolation bounds with \eqref{eq:trace_s} yields
\begin{equation}\label{eq:Linf-bound}
\|v - \mathcal{I}_h v\|_{L^\infty(\partial E)}^2 \lesssim
h_{\partial E}^{2\kbou + 1}  |v|_{\kbou+1, \partial E}^2 \lesssim
h_{\partial E}^{2\kbou + 1} \big( h_E^{-1} | v |_{\kbou+1,E}^2 + h_E | v |_{\kbou+2, E}^2 \big)  \, .
\end{equation}
\end{remark}

\subsection{Error estimates}
\label{sub:error}

The aim of the present section is to derive the rate of convergence for the proposed virtual element scheme in terms of the mesh quantities $\hbulkE$, $\hpartialE$, $\hbulk$, $\hpartial$ and $\ell_E$, the coercivity constant $\alpha$ in \eqref{eq:alpha}, and the polynomial orders $\kint$ and $\kbou$.
We introduce the analysis with the following abstract error estimation.

\begin{proposition}
\label{prp:abstract}
Under the assumption \textbf{(A1)}, 
let $u \in V \cap H^s(\Omega_h)$ with $s>1$ be the solution of the equation \eqref{eq:poisson-c} and 
$u_h \in \VemG$ be the solution of the equation \eqref{eq:poisson-vem}. Consider the functions
\[
e_h := u_h - \mathcal{I}_h u \,, \qquad
e_{\mathcal{I}} := u - \mathcal{I}_h u \,, \qquad
e_{\pi} :=  u - u_{\pi} \,, \qquad
\emixed := u_{\pi} - \mathcal{I}_h u \,,
\]
where $\mathcal{I}_h u \in \VemG$ is the interpolant function of $u$ defined in \eqref{eq:Ih} and $u_{\pi} \in \Pk_{\kint}(\Omega_h)$ is the piecewise polynomial approximation of $u$ defined in Lemma \ref{lm:bramble}.
Then  it holds that
\begin{equation}
\label{eq:abstract}
| u - u_h |^2_{1, \Omega}  + \alpha \, a_h(e_h, \, e_h)\lesssim
\alpha^2 \!\! \sum_{E \in \Omega_h}
h_E^2 \, \|f - f_h\|^2_{0, E} + 
\alpha^2 \, |e_{\pi}|^2_{1, \Omega_h} +
\alpha \, |e_{\mathcal{I}}|^2_{1, \Omega} +
\alpha \!\! \sum_{E \in \Omega_h} \sigma^E 
\end{equation}
where $\alpha$ is the coercivity constant \eqref{eq:alpha} and 
$
\sigma^E := \mathcal{S}^E((I - \PN)e_u, (I - \PN)e_u) \,.
$
\end{proposition}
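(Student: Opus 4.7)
The plan is a Strang-type argument, adapted to the VEM framework. First, decomposing $u - u_h = e_{\mathcal{I}} - e_h$ with a triangle inequality reduces the control of $|u - u_h|^2_{1,\Omega}$ to controlling $|e_\mathcal{I}|^2_{1,\Omega}$ (which already appears in the right-hand side) and $|e_h|^2_{1,\Omega}$. Since $e_h \in \VemG$, summing \eqref{S:coerc} over $E \in \Omega_h$ yields the global coercivity bound $|e_h|^2_{1,\Omega} \lesssim \alpha \, a_h(e_h,e_h)$, so the task reduces to estimating $a_h(e_h,e_h)$.

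The key identity is obtained by writing $a_h(e_h,e_h) = a_h(u_h,e_h) - a_h(\mathcal{I}_h u, e_h)$ and then inserting $u_\pi$ via $\mathcal{I}_h u = u_\pi - \emixed$. Using the discrete equation \eqref{eq:poisson-vem}, the $\kint$-consistency \eqref{eq:a_h-cons} applied to $u_\pi \in \Pk_{\kint}(\Omega_h)$, and the continuous equation $a(u,e_h) = (f,e_h)$ (which applies since $e_h \in V$), one arrives after a short manipulation at
\[
a_h(e_h,e_h) = (f_h - f, e_h) + a(e_\pi, e_h) + a_h(\emixed, e_h).
\]
The three terms are then estimated separately. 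For the load term, when $\kint \geq 2$ the $L^2$-orthogonality $(f-f_h, q)_E = 0$ for all $q \in \Pk_{\kint-2}(E)$ allows us to replace $e_h$ by $e_h - \Pi^{0,E}_0 e_h$ element-wise, and a Poincaré-type inequality produces the factor $h_E$; for $\kint=1$ the specially-defined right-hand side yields $(f-f_h,e_h)_E = \int_E f(e_h - \bar e_h^{\partial})$, and the same estimate follows by the boundary Poincaré inequality $\|e_h - \bar e_h^{\partial}\|_{0,E} \lesssim h_E |e_h|_{1,E}$. In both cases a Cauchy-Schwarz on the sum gives
\[
(f_h - f, e_h) \lesssim \Bigl(\sum_{E \in \Omega_h} h_E^2 \|f - f_h\|_{0,E}^2\Bigr)^{1/2} |e_h|_{1,\Omega}.
\]
The polynomial term satisfies $a(e_\pi,e_h) \leq |e_\pi|_{1,\Omega_h} |e_h|_{1,\Omega}$ by Cauchy-Schwarz. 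For the mixed term, since $a_h$ is positive semi-definite, Cauchy-Schwarz together with the continuity $|\PN \emixed|_{1,E} \leq |\emixed|_{1,E}$ and the triangle inequality $|\emixed|_{1,E} \leq |e_\pi|_{1,E} + |e_\mathcal{I}|_{1,E}$ yield
\[
a_h(\emixed, e_h) \lesssim \Bigl(|e_\pi|^2_{1,\Omega_h} + |e_\mathcal{I}|^2_{1,\Omega} + \sum_{E \in \Omega_h}\sigma^E\Bigr)^{1/2} \bigl(a_h(e_h,e_h)\bigr)^{1/2}.
\]

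Substituting $|e_h|_{1,\Omega} \leq \alpha^{1/2} (a_h(e_h,e_h))^{1/2}$ in the first two bounds, dividing by $(a_h(e_h,e_h))^{1/2}$ and applying Young's inequality produces the claimed estimate. The main technical point, and the main obstacle, is the careful bookkeeping of the powers of $\alpha$: the load and polynomial contributions are coupled to $|e_h|_{1,\Omega}$ and therefore acquire an extra factor $\alpha$ when passing through coercivity, producing the $\alpha^2$ prefactor, whereas the interpolation and stabilization contributions are coupled directly to $(a_h(e_h,e_h))^{1/2}$ and thus inherit only $\alpha$ once the final multiplication is performed. A secondary technical point is the separate treatment of the load term for $\kint=1$, which must be handled through a boundary-average argument rather than the $L^2$-projection identity used when $\kint \geq 2$.
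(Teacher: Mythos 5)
Your proposal is correct and follows essentially the same route as the paper: the identical error identity $a_h(e_h,e_h) = (f_h-f,e_h) + a(e_\pi,e_h) + a_h(\emixed,e_h)$ obtained from coercivity, the two equations and $\kint$-consistency, followed by the same three term-by-term estimates (orthogonality plus Poincar\'e for the load, Cauchy--Schwarz for the polynomial term, Cauchy--Schwarz in $a_h^E$ with continuity of $\PN$ for the mixed term) and the same Young-inequality bookkeeping of the powers of $\alpha$. Your explicit handling of the $\kint=1$ load term via the boundary average is a small refinement the paper leaves implicit, but it does not change the argument.
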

\begin{proof}
Simple computations yield
\begin{equation}
\label{eq:e_h}
\begin{aligned}
\alpha^{-1} &|e_h|^2_{1, \Omega} + a_h(e_h,  e_h)  \lesssim  a_h(e_h,  e_h) = a_h(u_h - \mathcal{I}_h u,  e_h) 
& \text{(by \eqref{S:coerc} and \eqref{eq:alpha})}
\\
& = (f_h - f,  e_h)  
+ a(u,  e_h) - a_h(\mathcal{I}_h u,  e_h)  
& \text{(using \eqref{eq:poisson-c} and \eqref{eq:poisson-vem})} 
\\
& = (f_h - f,  e_h)  + 
\sum_{E \in \Omega_h} a^E(e_{\pi},  e_h) + 
\sum_{E \in \Omega_h} a_h^E(e_u, e_h)  
& \text{(property \eqref{eq:a_h-cons})} 
\\ 
& =: \eta_f + \eta_{\pi} + \eta_{h} \,.
\end{aligned}
\end{equation}
Let us analyse each term in \eqref{eq:e_h}.
The first term, using \eqref{eq:right}, \eqref{eq:P0_k^E} and the Cauchy-Schwarz inequality
 can be bounded as follows
\begin{equation}
\label{eq:eta_f}
\begin{aligned}
\eta_f & = \sum_E (f_h - f, e_h)_{0,E} = 
\sum_{E \in \Omega_h} (f_h - f, e_h - \Pi_0^{0,E} e_h)_{0,E} 
\\
& \leq \sum_{E \in \Omega_h}  h_E \, \|f - f_h\|_{0, E} |e_h|_{1,E} 
\leq \alpha \!\! \sum_{E \in \Omega_h}
h_E^2 \, \|f - f_h\|^2_{0, E}
+ \frac{1}{4}  \alpha^{-1} \, |e_h|^2_{1,\Omega}    
\end{aligned}
\end{equation}
The Cauchy-Schwarz inequality  applied to the second term  in \eqref{eq:e_h} entails
\begin{equation}
\label{eq:eta_pi}
\eta_{\pi} = \sum_E a^E(e_{\pi}, e_h)_{0,E} \leq 
\sum_{E \in \Omega_h} |e_{\pi}|_{1,E} \, |e_{h}|_{1,E}   
\leq \alpha \, |e_{\pi}|_{1, \Omega_h}^2 + \frac{1}{4} \, \alpha^{-1} \, |e_h|^2_{1,\Omega}  \,.
\end{equation}
Finally for the last term  in \eqref{eq:e_h}, using the continuity of $\PN$ with respect to the $H^1$-seminorm, we have
\begin{equation}
\label{eq:eta_h}
\begin{split}
&\eta_{h}  = 
\sum_{E \in \Omega_h} a_h^E(e_u,  e_h) \leq
\sum_{E \in \Omega_h} a_h^E(e_u,  e_u)^{1/2} \, a_h^E(e_h, e_h)^{1/2}
\\
&\leq
\frac{1}{4} a_h(e_h,  e_h) + 
\sum_{E \in \Omega_h} a_h^E(e_u,  e_u) 
\\
& \leq  \frac{1}{4} a_h(e_h,  e_h) + 
\sum_{E \in \Omega_h} \left(a^E(\PN e_u,   \PN e_u) +
 \mathcal{S}^E ((I- \PN)e_u, (I - \PN) e_u) \right)
\\
& \leq \frac{1}{4} a_h(e_h,  e_h) + 
\sum_{E \in \Omega_h} |e_u|_{1,E}^2 +
\sum_{E \in \Omega_h} \sigma^E 
\\
& \leq \frac{1}{4} a_h(e_h,  e_h) + 
2 |e_{\pi}|_{1,\Omega_h}^2 + 2 |e_{\mathcal{I}}|_{1,\Omega}^2 +
\sum_{E \in \Omega_h} \sigma^E \,.
\end{split}
\end{equation}
Collecting \eqref{eq:eta_f}, \eqref{eq:eta_pi} and \eqref{eq:eta_h} in \eqref{eq:e_h} we obtain
\[
\alpha^{-1} |e_h|^2_{1, \Omega} + a_h(e_h, \, e_h)  \lesssim
\alpha \sum_{E \in \Omega_h} h_E^{2} \, \|f - f_h\|^2_{0, E}  +
\alpha |e_{\pi}|_{1,\Omega_h}^2 +  |e_{\mathcal{I}}|_{1,\Omega}^2 +
\sum_{E \in \Omega_h} \sigma^E \,.
\]
The proof now follows from the bound above and the triangular inequality.
\end{proof}

The next step in the analysis consists in estimating the term $\sigma^E$ in \eqref{eq:abstract} for the \texttt{dofi-dofi} and the \texttt{trace} stabilization
(that we denote respectively by $\sigma^E_{\texttt{d}}$ and $\sigma^E_{\texttt{t}}$). 

\begin{lemma}
\label{lm:dofi}
Consider the \texttt{dofi-dofi} stabilization $\Stab_{\texttt{d}}(\cdot, \cdot)$ in \eqref{eq:dofidofi}.
Then, under assumption {\bf (A1)} 
\[
\sigma^E_{\texttt{d}}  \lesssim 
\ell_E 
\left(\|e_{\pi}\|_{L^{\infty}(E)}^2
+ |e_{\pi}|^2_{1,E}  
+ \|e_{\mathcal{I}}\|_{L^{\infty}(\partial E)}^2 
+ h_E^{-2} \, \|e_{\mathcal{I}}\|_{0,E}^2 + |e_{\mathcal{I}}|^2_{1,E}
\right) \,.
\]
\end{lemma}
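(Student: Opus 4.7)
\smallskip
\noindent
\textbf{Proof plan for Lemma \ref{lm:dofi}.}
Set $w_h := (I-\PN)e_u \in V^h_{\kint}(E)$, so that $\sigma^E_{\texttt{d}} = \Stab_{\texttt{d}}(w_h,w_h) = \vec{w}_h\cdot \vec{w}_h$, where $\vec{w}_h$ collects the degrees of freedom $\mathbf{D_V1}$--$\mathbf{D_V3}$ of $w_h$ in the enlarged space $V^h_{\kint}(E)$. The plan is to split this sum according to the \emph{type} of DoF: the $\ell_E$ vertex values and the $(\kint-1)\ell_E$ edge point values live on $\partial E$ and are controlled by $\|w_h\|_{L^\infty(\partial E)}^2$, while the moment DoFs against the basis $\{m_i\}$ of $\Pk_{\kint-2}(E)$ are $\dim(\Pk_{\kint-2}(E))$-many and can be handled in bulk.

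For the point DoFs we simply count: $\sum_{\text{vertex}\,/\text{edge points}}(w_h(\xi_i))^2 \lesssim \ell_E\,\|w_h\|_{L^\infty(\partial E)}^2$. For the moments we use Cauchy--Schwarz together with $\|m_i\|_{L^\infty(E)}=1$:
\[
\Big(\frac{1}{|E|}\int_E w_h\, m_i\,\mathrm{d}E\Big)^2 \le |E|^{-1}\|w_h\|_{L^2(E)}^2 \lesssim h_E^{-2}\|w_h\|_{L^2(E)}^2,
\]
where the last inequality exploits $|E|\gtrsim h_E^2$ under assumption \textbf{(A1)}. Summing, a fixed (independent of $\ell_E$) number of such contributions yields
\[
\sigma^E_{\texttt{d}} \lesssim \ell_E\,\|w_h\|_{L^\infty(\partial E)}^2 + h_E^{-2}\|w_h\|_{L^2(E)}^2.
\]

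The second step is to bound the two norms of $w_h$ in terms of $e_\pi$ and $e_{\mathcal{I}}$, starting from $e_u = -e_\pi + e_{\mathcal{I}}$ and $w_h = e_u - \PN e_u$. Using $H^1$-seminorm stability $|\PN e_u|_{1,E}\le |e_u|_{1,E}$ together with the boundary-average property $\overline{\PN e_u}^{\partial E} = \overline{e_u}^{\partial E}$, a Poincar\'e inequality on $E$ gives $\|\PN e_u - \overline{e_u}^{\partial E}\|_{L^2(E)} \lesssim h_E |e_u|_{1,E}$. Since $\PN e_u - \overline{e_u}^{\partial E}$ is a polynomial of degree $\le \kint$, a standard inverse inequality (valid thanks to \textbf{(A1)}) then yields
\[
\|\PN e_u\|_{L^\infty(E)} \lesssim |\overline{e_u}^{\partial E}| + h_E^{-1}\|\PN e_u - \overline{e_u}^{\partial E}\|_{L^2(E)} \lesssim \|e_u\|_{L^\infty(\partial E)} + |e_u|_{1,E},
\]
and analogously $\|\PN e_u\|_{L^2(E)} \lesssim h_E\big(\|e_u\|_{L^\infty(\partial E)} + |e_u|_{1,E}\big)$. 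Triangle inequalities then give
$\|w_h\|_{L^\infty(\partial E)} \lesssim \|e_u\|_{L^\infty(\partial E)} + |e_u|_{1,E}$ and $h_E^{-2}\|w_h\|_{L^2(E)}^2 \lesssim h_E^{-2}\|e_u\|_{L^2(E)}^2 + \|e_u\|_{L^\infty(\partial E)}^2 + |e_u|_{1,E}^2$.

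Finally, I expand $e_u = -e_\pi + e_{\mathcal{I}}$ on each norm: $\|e_u\|_{L^\infty(\partial E)} \le \|e_\pi\|_{L^\infty(E)} + \|e_{\mathcal{I}}\|_{L^\infty(\partial E)}$, $|e_u|_{1,E}\le |e_\pi|_{1,E}+|e_{\mathcal{I}}|_{1,E}$, and $h_E^{-2}\|e_\pi\|_{L^2(E)}^2 \lesssim \|e_\pi\|_{L^\infty(E)}^2$ since $|E|\lesssim h_E^2$. Substituting and using $\ell_E\ge 3$ to absorb the non-$\ell_E$ terms inside the $\ell_E$-multiplied factor produces exactly the claimed bound. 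The main technical point is the control of $\|\PN e_u\|_{L^\infty(E)}$ by boundary and gradient information of $e_u$ alone, which is precisely where the boundary-average normalization built into the definition of $\PN$ (together with the shape regularity from \textbf{(A1)}) is essential.
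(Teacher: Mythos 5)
Your proposal is correct and follows essentially the same route as the paper: split the \texttt{dofi-dofi} sum into the boundary point values (counted by $\ell_E$) and the fixed number of internal moments (bounded via $\|m_i\|_{L^\infty(E)}\le 1$ and $|E|\sim h_E^2$), control the $\PN e_u$ contribution through polynomial inverse estimates and the $H^1$-stability of $\PN$, and finally expand $e_u=e_{\mathcal I}-e_\pi$ with the trivial $L^2$-by-$L^\infty$ bound on $e_\pi$. The only (immaterial) difference is that you make the scaling argument for $\PN e_u$ explicit via the boundary-average normalization, Poincar\'e and an inverse inequality, where the paper simply invokes the standard scaled continuity of $\PN$.
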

\begin{proof}
We preliminary observe that for all $v_h \in [\Vem + \Pk_{\kint}(E)]$, given $\zeta := (I - \PN)v_h$,
by definition of DoFs $\mathbf{D_V}$ and the Cauchy-Schwarz inequality we have
\[
\Stab_{\texttt{d}}(\zeta, \zeta)  \leq
\sum_{\texttt{nodes} \, x_i} |v_h(x_i) - \PN v_h(x_i)|^2 
+ \sum_{\texttt{moments}} \frac{1}{|E|^2} \|m_i\|_{0,E}^2 \|(I - \PN)v_h\|^2_{0,E}  \, ,
\]
where the first sum is for all the nodes $x_i \in\partial E$ associated to 
$\mathbf{D_V1}$ and $\mathbf{D_V2}$  with $\kbou = \kint$.
Being $\|m_i\|_{L^{\infty}(E)} \leq 1$ from the above inequality we infer
\[
\Stab_{\texttt{d}}(\zeta, \zeta)  \lesssim
\ell_E \left(\|v_h\|_{L^{\infty}(\partial E)}^2 + \|\PN v_h\|_{L^{\infty}(\partial E)}^2 \right) 
+ h_E^{-2} \|(I - \PN)v_h\|^2_{0,E} \,.
\]
From the bound above, recalling \eqref{eq:Pn_k^E} and using a scaled Poincar\'e inequality we get
\[
\Stab_{\texttt{d}}(\zeta, \zeta)
\lesssim
\ell_E \left(\|v_h\|_{L^{\infty}(\partial E)}^2 + \|\PN v_h
\|_{L^{\infty}(\partial E)}^2 \right) 
+ |(I - \PN)v_h|^2_{1,E}  \, .
\]
Furthermore, a standard scaling argument for polynomials and the continuity of $\PN$ with respect to the $H^1$ (scaled) norm entail the  estimate
\[
\Stab_{\texttt{d}}((I - \PN)v_h, (I - \PN)v_h)
\lesssim
\ell_E   \left(\|v_h\|_{L^{\infty}(\partial E)}^2 +  h_E^{-2} \, \|v_h\|_{0,E}^2 + |v_h|^2_{1,E} \right)  \,.
\]
Recalling that $e_u = e_{\mathcal{I}}- e_{\pi}$, we employ the bound above in order to estimate $\sigma^E_{\texttt{d}}$ obtaining
\[
\begin{aligned}
&\sigma^E_{\texttt{d}}   \lesssim
\ell_E \left( \|e_u\|_{L^{\infty}(\partial E)}^2 + h_E^{-2} \, \|e_u\|_{0,E}^2 + |e_u|^2_{1,E} \right)
\\
& \lesssim
\ell_E 
\left(\|e_{\pi}\|_{L^{\infty}(E)}^2 
+ h_E^{-2} \, \|e_{\pi}\|_{0,E}^2 
+ |e_{\pi}|^2_{1,E}  
+ \|e_{\mathcal{I}}\|_{L^{\infty}(\partial E)}^2 
+ h_E^{-2} \, \|e_{\mathcal{I}}\|_{0,E}^2 + |e_{\mathcal{I}}|^2_{1,E}
\right) \,.
\end{aligned}
\]
The result now follows from the above inequality and a trivial bound of the $L^2$ norm by the $L^\infty$ norm.
\end{proof}

\begin{lemma}
\label{lm:trace}
Consider the \texttt{trace} stabilization $\Stab_{\texttt{t}}(\cdot, \cdot)$ in \eqref{eq:trace}.
Then, under assumption {\bf (A1)} 
\[
\sigma^E_{\texttt{t}}  \lesssim 
h_E \, |e_{\pi}|^2_{1, \partial E} +  |e_{\pi}|_{1, E}^2 +
h_E \, |e_{\mathcal{I}}|^2_{1, \partial E}  + |e_{\mathcal{I}}|_{1, E}^2 \,.
\]
\end{lemma}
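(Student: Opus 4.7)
The plan is to unravel the definition of $\sigma^E_{\texttt{t}}$ using \eqref{eq:trace}, which gives
$$
\sigma^E_{\texttt{t}} = h_E \, |(I-\PN)e_u|_{1,\partial E}^2,
$$
and then control the right-hand side by splitting off the polynomial projection.

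First, I would apply the triangle inequality
$$
|(I-\PN)e_u|_{1,\partial E}^2 \lesssim |e_u|_{1,\partial E}^2 + |\PN e_u|_{1,\partial E}^2,
$$
and then use that $e_u = e_{\mathcal I}-e_\pi$ (as already observed in the proof of Lemma \ref{lm:dofi}) together with a further triangle inequality to reduce the first summand to $|e_\pi|_{1,\partial E}^2 + |e_{\mathcal I}|_{1,\partial E}^2$. Multiplying by $h_E$ yields the first and third terms in the target bound.

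Second, I need to absorb the polynomial contribution $h_E|\PN e_u|_{1,\partial E}^2$ into the bulk seminorms $|e_\pi|_{1,E}^2+|e_{\mathcal I}|_{1,E}^2$. The key tool is the standard polynomial trace--inverse inequality valid under assumption \textbf{(A1)}: for any $p \in \Pk_{\kint}(E)$,
$$
h_E^{1/2} \, |p|_{1,\partial E} \lesssim |p|_{1,E}.
$$
This follows from a standard trace inequality applied to $\nabla p$ combined with an inverse estimate to dispose of the $H^2$ term; the star-shaped condition is exactly what is needed for these inequalities to hold with constants independent of all mesh parameters except the shape-regularity constant $\rho$. Applying this with $p=\PN e_u$ and then invoking the $H^1$-continuity of $\PN$ (namely $|\PN e_u|_{1,E}\le |e_u|_{1,E}$) together with $e_u = e_{\mathcal I}-e_\pi$ gives
$$
h_E\,|\PN e_u|_{1,\partial E}^2 \lesssim |\PN e_u|_{1,E}^2 \lesssim |e_\pi|_{1,E}^2 + |e_{\mathcal I}|_{1,E}^2,
$$
producing the second and fourth terms of the target bound.

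I do not expect a serious obstacle: the argument is considerably shorter than the \texttt{dofi--dofi} case because the trace stabilization already has the right structure (an $H^{1}$ boundary seminorm scaled by $h_E$), so no vertex values nor $L^\infty$ norms appear, and no edge-counting factor $\ell_E$ enters. The only nontrivial ingredient is the polynomial trace--inverse estimate above, which is classical under \textbf{(A1)}.
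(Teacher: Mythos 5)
Your proposal is correct and follows essentially the same route as the paper: a triangle inequality splitting $(I-\PN)e_u$ into $e_u$ and $\PN e_u$, a scaled trace (trace--inverse) inequality for polynomials under \textbf{(A1)} combined with the $H^1$-continuity of $\PN$ to absorb the boundary seminorm of $\PN e_u$ into $|e_u|_{1,E}^2$, and finally the decomposition $e_u = e_{\mathcal I}-e_\pi$. No gaps.
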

\begin{proof}
We start by observing that for all $v_h \in [\Vem + \Pk_{\kint}(E)]$ it holds
\[
\begin{aligned}
\Stab_{\texttt{t}}((I - \PN)v_h, (I - \PN)v_h) &\lesssim
\Stab_{\texttt{t}}(v_h, v_h) +
\Stab_{\texttt{t}}(\PN v_h, \PN v_h) 
\\
& = 
h_E \int_{\partial E} (\partial_s v_h)^2 \, {\rm d}s + 
h_E \int_{\partial E} (\partial_s \PN v_h)^2 \, {\rm d}s 
\\
& \le
h_E \, |v_h|^2_{1, \partial E} + h_E \|\nabla \PN v_h\|_{0, \partial E}^2
\\
& \lesssim
h_E \, |v_h|^2_{1, \partial E} + |v_h|_{1, E}^2
\end{aligned}
\]
where in the last inequality we first use a scaled trace inequality for polynomials and then the continuity of $\PN$ with respect to the $H^1$-seminorm.
Therefore the term $\sigma^E_{\texttt{t}}$ can be bounded as follows
\[
\sigma^E_{\texttt{t}} 
\lesssim h_E \, |e_u|^2_{1, \partial E} + |e_u|_{1, E}^2
\lesssim 
h_E \, |e_{\pi}|^2_{1, \partial E} + |e_{\pi}|_{1, E}^2 +
h_E \, |e_{\mathcal{I}}|^2_{1, \partial E}  + |e_{\mathcal{I}}|_{1, E}^2 \,. 
\]
\end{proof}

We are now ready to prove the following convergence results. For sake of simplicity, in accordance with Corollary \ref{cor:interpolation}, in both lemmas we assume 
all the needed (piecewise) regularity of the solution $u$. 

\begin{proposition}
\label{prp:errdofi}
Under the assumptions \textbf{(A1)} and  \textbf{(A2)},
let $u \in V$ be the solution of equation \eqref{eq:poisson-c} and 
$u_h \in \VemG$ be the solution of equation \eqref{eq:poisson-vem} obtained with the \texttt{dofi-dofi} stabilization (cf. \eqref{eq:dofidofi}).
Assume moreover that $u \in H^{\bar{k}}(\Omega_h)$  with $\bar{k} = \max\{\kint+1, \, \kbou+2 \}$ and $f \in H^{\kint-1}(\Omega_h)$. 
Then it holds that
\begin{equation}\label{eq:errordofi}
\begin{aligned}
|u - u_h|^2_{1, \Omega} & \lesssim
\alpha \sum_{E \in \Omega_h} 
\Big(
\alpha \, h_E^{2\kint} |f|^2_{\kint-1,E} + (\alpha + \ell_E) h_E^{2\kint} |u|^2_{\kint+1,E}  +
\\
& +  \ell_E \, h_{\partial E}^{2\kbou+1} h_E^{-1} \, |u|^2_{\kbou+1,E} 
+  \ell_E \, h_{\partial E}^{2\kbou+1} h_E \, |u|^2_{\kbou+2,E}
\Big)
\end{aligned}
\end{equation}
where $\alpha = \log(1 + \mathcal{H})$.
\end{proposition}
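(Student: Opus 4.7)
The plan is to chain the abstract error estimate of Proposition \ref{prp:abstract} (with the coercivity constant $\alpha = \log(1+\mathcal{H})$ supplied by Lemma \ref{lm:alphadofi}) with the stabilization bound of Lemma \ref{lm:dofi}, and to feed the result through the interpolation and polynomial approximation estimates of the previous subsection. No essentially new ingredient is needed; the task is careful bookkeeping of the different powers of $h_E$, $h_{\partial E}$, $\alpha$ and $\ell_E$ that accompany each term.

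First I would bound the four terms on the right-hand side of \eqref{eq:abstract} separately. The load-approximation contribution $\|f - f_h\|_{0,E}$ is, by the definition $f_h|_E = \Pi^{0,E}_{\kint-2} f$ and a standard Bramble--Hilbert argument, of order $h_E^{\kint-1}|f|_{\kint-1,E}$, producing the contribution $\alpha^2 \sum_E h_E^{2\kint}|f|^2_{\kint-1,E}$. The term $|e_\pi|^2_{1,\Omega_h}$ is handled elementwise by Lemma \ref{lm:bramble} with $(r,s)=(1,\kint+1)$, giving $h_E^{2\kint}|u|^2_{\kint+1,E}$. The VEM interpolation term $|e_{\mathcal{I}}|^2_{1,\Omega}$ is already the content of Corollary \ref{cor:interpolation}, which furnishes precisely the three-term bulk/boundary split appearing in \eqref{eq:errordofi}.

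The main technical step is the bound on $\sigma^E_{\texttt{d}}$. According to Lemma \ref{lm:dofi} it suffices to control, pre-multiplied by $\ell_E$, the five quantities $\|e_\pi\|^2_{L^\infty(E)}$, $|e_\pi|^2_{1,E}$, $\|e_{\mathcal{I}}\|^2_{L^\infty(\partial E)}$, $h_E^{-2}\|e_{\mathcal{I}}\|^2_{0,E}$ and $|e_{\mathcal{I}}|^2_{1,E}$. For the $e_\pi$-pieces, the two parts of Lemma \ref{lm:bramble} with $s=\kint+1$ deliver directly $h_E^{2\kint}|u|^2_{\kint+1,E}$. For the $e_{\mathcal{I}}$-pieces, Remark \ref{rm:Linf-bound} handles the $L^\infty(\partial E)$ contribution; Remark \ref{rm:interpolation}, once squared and multiplied by $h_E^{-2}$, converts \eqref{eq:L2-int} precisely into $h_E^{2\kint}|u|^2_{\kint+1,E} + h_{\partial E}^{2\kbou+1} h_E^{-1}|u|^2_{\kbou+1,E} + h_{\partial E}^{2\kbou+1} h_E |u|^2_{\kbou+2,E}$; and Corollary \ref{cor:interpolation} handles the $H^1$ seminorm. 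Each of the five bounds shares the same three-term shape, so that $\sigma^E_{\texttt{d}}$ is bounded by $\ell_E$ times this common expression.

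Collecting the four contributions and factoring out $\alpha$ yields the prefactor $\alpha^2$ on the $|f|^2_{\kint-1}$ term, $\alpha(\alpha + 1 + \ell_E)$ on the bulk $|u|^2_{\kint+1,E}$ term, and $\alpha(1 + \ell_E)$ on both boundary terms. Using the trivial inequality $1 \le \ell_E$ (every polygon has at least three edges) absorbs the stray $1$'s into $\ell_E$, producing exactly \eqref{eq:errordofi}. I expect the only real subtlety to be checking that the $L^2$ estimate of Remark \ref{rm:interpolation}, after division by $h_E^2$, reproduces the very same three-term pattern as Corollary \ref{cor:interpolation} and Remark \ref{rm:Linf-bound}, so that the five pieces of $\sigma^E_{\texttt{d}}$ collapse into a single clean expression without introducing extra mismatched scales and the neat bulk/boundary decomposition of the final bound is preserved.
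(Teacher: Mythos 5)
Your proposal is correct and follows essentially the same route as the paper: Proposition \ref{prp:abstract} plus Lemma \ref{lm:alphadofi} and Lemma \ref{lm:dofi}, with the load and $e_\pi$ terms handled by Lemma \ref{lm:bramble} and the $e_{\mathcal{I}}$ terms by Corollary \ref{cor:interpolation}, \eqref{eq:L2-int} and \eqref{eq:Linf-bound}, all sharing the same three-term bulk/boundary pattern. Your explicit check that \eqref{eq:L2-int} divided by $h_E^2$ reproduces that pattern, and the remark that $1\le\ell_E$ absorbs the stray constants, are exactly the bookkeeping the paper performs implicitly.
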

\begin{proof}
As direct consequence of Proposition \ref{prp:abstract} and Lemma \ref{lm:dofi} we get
\begin{multline}\label{eq:abstract-dofi}
|u - u_h|_{1, \Omega}^2  \lesssim 
\alpha^2 \!\! \sum_{E \in \Omega_h}  h_E^2 \, \|f - f_h\|^2_{0,E} +  \alpha^2 \, |e_{\pi}|^2_{1,\Omega_h} + 
\\
+ \alpha \! \sum_{E \in \Omega_h} \ell_E \left(\|e_{\pi}\|_{L^{\infty}(E)}^2 +  
|e_{\pi}|^2_{1,E} + 
\|e_{\mathcal{I}}\|_{L^{\infty}(\partial E)}^2 + 
h_E^{-2} \, \|e_{\mathcal{I}}\|_{0,E}^2 + |e_{\mathcal{I}}|^2_{1,E}
\right) .
\end{multline}
Recalling \eqref{eq:fh}, the Bramble-Hilbert Lemma \ref{lm:bramble} yields
\begin{equation}
\label{eq:err-bramble}
\begin{aligned}
& \sum_{E \in \Omega_h}  h_E^2 \, \|f - f_h\|^2_{0,E}  + |e_{\pi}|^2_{1, \Omega_h}
\lesssim \sum_{E \in \Omega_h} 
\big( h_E^{2\kint} |f|^2_{\kint-1,E} + h_E^{2\kint} |u|^2_{\kint+1,E} \big) \,,
\\
& \sum_{E \in \Omega_h} \ell_E \left(
\|e_{\pi}\|_{L^{\infty}(E)}^2 + |e_{\pi}|^2_{1,E} \right)
\lesssim \sum_{E \in \Omega_h} \ell_E \, h_E^{2\kint} |u|^2_{\kint+1, E}\,.
\end{aligned}
\end{equation}
Whereas from Corollary \ref{cor:interpolation}, \eqref{eq:L2-int} and \eqref{eq:Linf-bound}
we easily infer 
\begin{multline}\label{rossi}
\ell_E \left(
\|e_{\mathcal{I}}\|_{L^{\infty}(\partial E)}^2 + 
h_E^{-2} \, \|e_{\mathcal{I}}\|_{0,E}^2 + |e_{\mathcal{I}}|^2_{1,E} \right)
\lesssim 
\\
\lesssim
\ell_E \left(
h_E^{2\kint} \, |u|^2_{\kint+1,E} 
+  h_{\partial E}^{2\kbou+1} h_E^{-1} \, |u|^2_{\kbou+1,E}
+  h_{\partial E}^{2\kbou+1} h_E \, |u|^2_{\kbou+2,E} 
\right) \, .
\end{multline}
The proof follows taking the sum for all $E \in \Omega_h$ in the above bound, and combining it with \eqref{eq:abstract-dofi} and \eqref{eq:err-bramble}. Finally, the value $\alpha = \log(1 + \mathcal{H})$ follows from Lemma \ref{lm:alphadofi}.
\end{proof}

\begin{proposition}
\label{prp:errtrace}
Under the assumption \textbf{(A1)}, 
let $u \in V$ be the solution of the equation \eqref{eq:poisson-c} and 
$u_h \in \VemG$ be the solution of the equation \eqref{eq:poisson-vem} obtained with the \texttt{trace} stabilization (cf. \eqref{eq:trace}).
Assume moreover that $u \in H^{\bar{k}}(\Omega_h)$  with $\bar{k} = \max\{\kint+1, \, \kbou+2 \}$ and
$f \in H^{\kint-1}(\Omega_h)$, then it holds that
\begin{equation}
\label{eq:errortrace}
|u - u_h|^2_{1, \Omega} \lesssim
\sum_{E \in \partial E} \Big(
 h_E^{2\kint} |f|^2_{\kint-1,E} + 
 h_E^{2\kint} |u|^2_{\kint+1,E} + 
h_{\partial E}^{2\kbou} |u|^2_{\kbou+1,E} + h_{\partial E}^{2\kbou} h_E^2 |u|^2_{\kbou+2, E}  
\Big) \, .
\end{equation}
\end{proposition}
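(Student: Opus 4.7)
The plan is to follow the same abstract framework as in the proof of Proposition \ref{prp:errdofi}, plugging in the \texttt{trace} stabilization estimate instead of the \texttt{dofi-dofi} one. First I would invoke Proposition \ref{prp:abstract}, which reduces the analysis to bounding four quantities: the data oscillation $h_E^2 \|f - f_h\|_{0,E}^2$, the piecewise polynomial approximation $|e_{\pi}|_{1,\Omega_h}^2$, the interpolation error $|e_{\mathcal{I}}|_{1,\Omega}^2$, and the stabilization-dependent term $\sum_E \sigma^E$. The decisive simplification relative to the \texttt{dofi-dofi} case is that Lemma \ref{lm:alphatrace} yields $\alpha = 1$ uniformly under \textbf{(A1)} only, so all the $\alpha$ and $\log(1+\mathcal{H})$ factors collapse and no assumption on piecewise quasi-uniformity of the boundary mesh is needed.

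Next I would apply Lemma \ref{lm:trace} to rewrite $\sigma^E_{\texttt{t}}$ as
\[
\sigma^E_{\texttt{t}} \lesssim h_E |e_{\pi}|^2_{1, \partial E} + |e_{\pi}|^2_{1, E} + h_E |e_{\mathcal{I}}|^2_{1, \partial E} + |e_{\mathcal{I}}|^2_{1, E}.
\]
The bulk terms involving $e_\pi$ and the data oscillation term are immediately controlled by the Bramble-Hilbert Lemma \ref{lm:bramble} by $h_E^{2\kint}|u|^2_{\kint+1,E}$ and $h_E^{2\kint}|f|^2_{\kint-1,E}$, exactly as in Proposition \ref{prp:errdofi}. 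For the boundary seminorm of $e_\pi$, I would apply the scaled trace inequality from Lemma 6.4 of \cite{BLR:2017} (the same one used to derive \eqref{eq:trace_s}) to obtain $h_E |e_\pi|^2_{1,\partial E} \lesssim |e_\pi|^2_{1,E} + h_E^2 |e_\pi|^2_{2,E}$, which by Bramble-Hilbert is again absorbed into $h_E^{2\kint}|u|^2_{\kint+1,E}$.

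The key remaining step is to bound $|e_{\mathcal{I}}|^2_{1, \partial E}$. Here I would exploit the $H^1$-orthogonal decomposition $e_{\mathcal{I}} = \vint + \vp$ introduced in \eqref{eq:split}--\eqref{eq:vv}: since $\vint$ vanishes on $\partial E$ by construction, it does not contribute to the boundary seminorm, and $\vp|_{\partial E} = v - v_b$ is just the one-dimensional polynomial interpolation error on $\partial E$. Standard 1D interpolation estimates on each edge give $|v - v_b|^2_{1,\partial E} \lesssim h_{\partial E}^{2\kbou} |v|^2_{\kbou+1, \partial E}$, and a further application of \eqref{eq:trace_s} transfers this boundary seminorm into a bulk bound, producing $h_E |e_{\mathcal{I}}|^2_{1,\partial E} \lesssim h_{\partial E}^{2\kbou}( |u|^2_{\kbou+1,E} + h_E^2 |u|^2_{\kbou+2,E})$. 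The interior contribution $|e_{\mathcal{I}}|^2_{1,E}$ is then handled by Corollary \ref{cor:interpolation}, and using $h_{\partial E} \le h_E$ its cross terms are dominated by the two terms already appearing in the target estimate.

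Collecting these bounds and summing over $E \in \Omega_h$, the combination with Proposition \ref{prp:abstract} yields \eqref{eq:errortrace}. I do not anticipate a serious obstacle: the main conceptual point is simply observing that the $\vint$ part of the interpolant carries no boundary contribution, which lets the $h_{\partial E}^{\kbou}$ boundary rate survive cleanly and be matched by the $h_E$ scaling in front of the trace stabilization term. The absence of any factor $\ell_E$ or $\alpha$ in the final estimate is the analytical payoff of having chosen the \texttt{trace} form, all of whose constants are edge-number and edge-length robust.
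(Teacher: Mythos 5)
Your proposal is correct and follows essentially the same route as the paper: the abstract estimate of Proposition \ref{prp:abstract} with $\alpha=1$ from Lemma \ref{lm:alphatrace}, the stabilization bound of Lemma \ref{lm:trace}, the scaled trace inequality of Lemma 6.4 in \cite{BLR:2017} for the $e_{\pi}$ boundary term, and 1D interpolation plus \eqref{eq:trace_s} for $|e_{\mathcal{I}}|_{1,\partial E}^2$ (your observation that $\vint$ vanishes on $\partial E$ is exactly the fact, implicit in the paper, that $\mathcal{I}_h u|_{\partial E}=v_b$). The only difference is cosmetic: you correctly cite Lemma \ref{lm:trace} where the paper's text contains a slip referring to Lemma \ref{lm:dofi}.
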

\begin{proof}
Proposition \ref{prp:abstract} and Lemma \ref{lm:dofi} combined with Lemma \ref{lm:alphatrace} imply
\begin{equation}
\label{eq:abstract-trace}
|u - u_h|_{1, \Omega}^2 \lesssim 
\sum_{E \in \Omega_h}  h_E^2 \, \|f - f_h\|^2_{0,E} +   |e_{\pi}|^2_{1,\Omega_h} + 
|e_{\mathcal{I}}|^2_{1,\Omega} 
+ \!\! \sum_{E \in \Omega_h}  \!\! \left( h_E |e_{\pi}|_{1, \partial E}^2 +   
h_E |e_{\mathcal{I}}|_{1, \partial E}^2 
\right) \,.
\end{equation}
Applying Lemma 6.4 in \cite{BLR:2017} and the Bramble-Hilbert Lemma \ref{lm:bramble} we obtain
\begin{equation}
\label{eq:err-tr1}
\sum_{E \in \Omega_h} h_E |e_{\pi}|_{1, \partial E}^2 \lesssim
\sum_{E \in \Omega_h} \left( |e_{\pi}|_{1, E}^2 + h_E^2 |e_{\pi}|_{2, E}^2 \right) 
\lesssim \sum_{E \in \Omega_h} h_E^{2\kint} |u|^2_{\kint+1,E} \,.
\end{equation}
Whereas polynomial approximation in 1D and bound \eqref{eq:trace_s} imply
\[
|e_{\mathcal{I}}|_{1, \partial E}^2 =
|u - \mathcal{I}_h u|_{1, \partial E}^2 \lesssim
h_{\partial E}^{2\kbou} |u|^2_{\kbou+1, \partial E} \lesssim
h_{\partial E}^{2\kbou} \left(h_E^{-1} |u|^2_{\kbou+1, E} + h_E|u|^2_{\kbou+2, E} \right)
\]
therefore
\begin{equation}
\label{eq:err-tr2}
\sum_{E \in \Omega_h} h_E |e_{\mathcal{I}}|_{1, \partial E}^2 \lesssim
\sum_{E \in \Omega_h} h_{\partial E}^{2\kbou} \left(|u|^2_{\kbou+1, E} + h_E^2 |u|^2_{\kbou+2, E} \right) \,.
\end{equation}
The thesis now follows gathering \eqref{eq:err-bramble}, \eqref{eq:err-tr1}, \eqref{eq:err-tr2} and Corollary \ref{cor:interpolation} in \eqref{eq:abstract-trace}, where we also make use of the trivial bound $h_{\partial E} \leq h_E$ to eliminate some terms.
\end{proof}

The error estimates in Proposition \ref{prp:errdofi} and Proposition \ref{prp:errtrace} separate the influence of the internal and boundary part of the elements, and are explicit in the parameters of interest. A simplified point of view, that helps understanding the implications of the above results, can be trivially derived including the Sobolev regularity terms (for $f$ and $u$) in the constant, assuming the reasonable relation $\ell_E \sim h_E/h_{\partial E}$ 
(that holds, for instance, for any quasi-uniform edge subdivision) and finally dropping the higher order terms. One obtains the estimates
\begin{equation}\label{paragone}
\begin{aligned}
|u - u_h|_{1, \Omega}^2  & \lesssim \alpha \sum_{E \in \Omega_h} 
\Big( (\alpha + \ell_E)^{1/2} \, h_E^{\kint} + h_{\partial E}^{\kbou} \Big)^2 
\qquad & \texttt{dofi-dofi} \, , \\
|u - u_h|_{1, \Omega}^2 & \lesssim \sum_{E \in \Omega_h} 
\Big( h_E^{\kint} + h_{\partial E}^{\kbou} \Big)^2 \qquad & \texttt{trace} \, .
\end{aligned}
\end{equation}
We draw some observation:
\\
$\bullet$ We recover the optimal rate of convergence in terms of  $h$ and $h_{\partial}$ that is $h^{\kint} + h_{\partial}^{\kbou}$. Therefore, if $\kint > \kbou$, the second term is expected to dominate; thus having smaller edges potentially leads to a more accurate solution.
\\
$\bullet$ The error estimates obtained with the \texttt{trace} stabilization is independent of $\mathcal{H}$ and $\ell_E$, thus are completely robust to any kind of edge refinement. 
\\
$\bullet$ For the \texttt{dofi-dofi} stabilization the error is polluted by   $\alpha = \log(1 + \mathcal{H})$ and $\ell_E^{1/2}$. 
The term $\log(1 + \mathcal{H})$ arises also in the analysis carried out in the papers \cite{BLR:2017,brenner-sung:2018} and is related to the presence of ``small edges''. Being  a logarithmic term, the influence is anyway minimal. 
Concerning  the dependence on $\ell_E^{1/2}$ we stress that such factor appears in front of the  ``higher'' order term $h^{\kint}$ (we recall that in our setting $\kint \geq \kbou$) therefore the influence of the number of edges $\ell_E$ is reduced. For $\kint > \kbou$ many small edges will in general lead to a more accurate solution, up to a certain extent.

\section{Numerical tests}
\label{sec:tests}

In this section we present some numerical experiments to be compared with our theoretical findings, also in order to test the practical aspects of increasing the internal degree $\kint$.
In Test 1 we examine the convergence properties of the proposed family of generalized VEM in the light of Proposition \ref{prp:errdofi} and Proposition \ref{prp:errtrace}.
In Test 2 and Test 3 we assess the behaviour of generalized VEM for a family of Voronoi meshes and a family of meshes arising from an agglomeration procedure.
In order to compute the VEM errors between the exact solution $u_{\rm ex}$ and the VEM solution $u_h$, we consider the computable $H^1$-like error quantities:
\begin{align}
\label{eq:err_bulk}
\texttt{err(bulk)}^2 &:=
\frac{\sum_{E \in \Omega_h} \|\nabla u_{\rm ex} - \Pi^{0,E}_{\kint-1} \nabla u_h  \|^2_{0,E}}
{| u_{\rm ex}|^2_{1\Omega}} \, ,
\\
\label{eq:err_trace}
\texttt{err(trace)}^2 &:=
\frac{\sum_{\texttt{edges} \, e}  H_e \int_{e} (\partial_s u_{\rm ex} - \partial_s u_h)^2 \, \rm ds}
{\sum_{\texttt{edges} \, e}  H_e \int_{e} (\partial_s u_{\rm ex})^2 \, \rm ds} \, ,
\end{align}
where $H_e$ denotes the average of the diameters of all the elements sharing the edge $e$. 
The error \texttt{err(bulk)} is the standard way to 
evaluate the $H^1$-seminorm VEM error. 
The error \texttt{err(trace)} also mimics a kind of $H^1$ discrete norm and involves the explicit value of the discrete solution on the skeleton of the mesh.

In the numerical tests we use the \texttt{dofi-dofi} stabilization \eqref{eq:dofidofi} and the \texttt{trace} stabilization \eqref{eq:trace}. 
For the dofi-dofi stabilization similar results are obtained with other variants such as the \texttt{D-recipe} stabilization introduced in \cite{BDR:2017}  or when adopting a lighter \texttt{dofi-dofi} stabilization in which the boundary evaluations are reduced from $\ell_E \, \kint$ to $\ell_E \, \kbou$, i.e. the \texttt{dofi-dofi} stabilization based on the true DoFs.

For both numerical tests we consider the Poisson equation on the unit square $\Omega = [0,1]^2$ and 
we choose the load term $f$ and the (non-homogeneous Dirichlet) boundary conditions in accordance with the analytical solution
\[
u_{\rm ex}(x, y) =  
x^5 + x^4y - xy^4  + x^3 - xy - x + y - 1
+ \sin(2\pi x)\sin(\pi y)
+ \log(x^2 + y^4 + 1) \,.
\]


\textbf{Test 1 (Convergence analysis)}
\label{test1}
The aim of the present test is to confirm the theoretical predictions of Proposition \ref{prp:errdofi} and Proposition \ref{prp:errtrace} and in particular the effective decoupling of the error into bulk and boundary components.
The domain is partitioned with two sequences of polygonal meshes: 
the uniform quadrilateral meshes $\mathcal{Q}_h$ and
the Voronoi meshes $\mathcal{V}_h$ (see Fig. \ref{fig:meshes_a}) with diameter $h=2^{-2}, \, 2^{-3}, \,2^{-4}, \, 2^{-5}$. 
For the generation of the Voronoi meshes we used the code Polymesher \cite{Polymesher}.
We then generate the sequences of meshes with uniform edge subdivision 
\begin{itemize}
\item $\mathcal{Q}_h^{\hpartial}$ with $\hpartial=2^{-1}h, \, 2^{-2}h, \,2^{-3}h, \, 2^{-4}h$ (see Fig. \ref{fig:meshes_b});
\item $\mathcal{V}_h^{\hpartial}$ with $\hpartial\approx h, \, 2^{-1}h, \,2^{-2}h, \, 2^{-3}h$ (see Fig. \ref{fig:meshes_c}).
\end{itemize}
Note that, since the Voronoi meshes have naturally smaller edges than square meshes (in comparison with the respective element diameter), the subdivisions above have a different range for the exponent in order to make the two cases comparable.
Furthermore, we observe that for the families of meshes above $\mathcal{H} \lesssim 1$, so that in accordance with Lemma \ref{lm:alphatrace} and Lemma \ref{lm:alphadofi} the coercivity constant in \eqref{eq:alpha} is $\alpha\lesssim 1$.
\begin{figure}[!htb]
\begin{center}
\begin{subfigure}{1\textwidth}
\begin{center}
\includegraphics[scale=0.18]{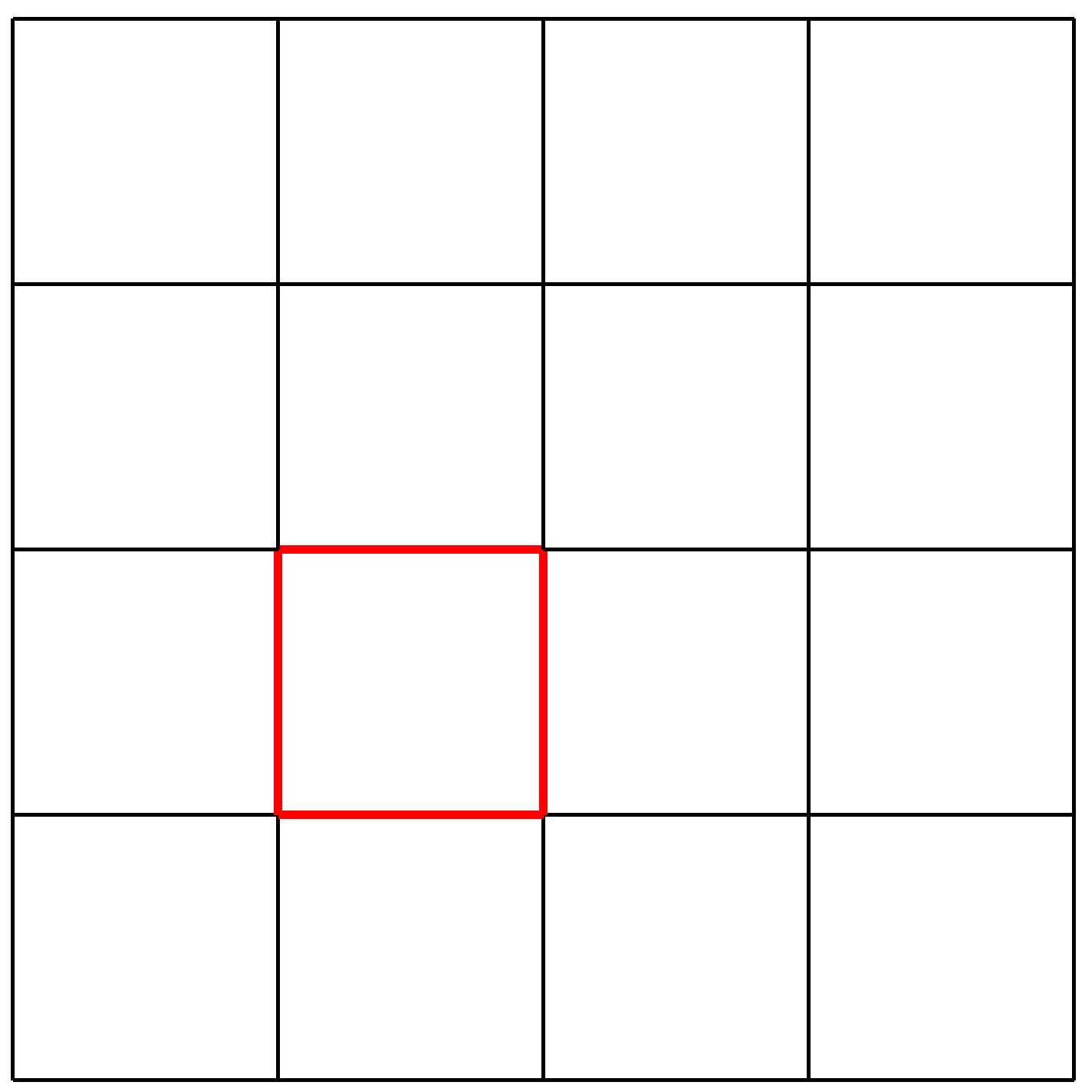}
\qquad \qquad
\includegraphics[scale=0.18]{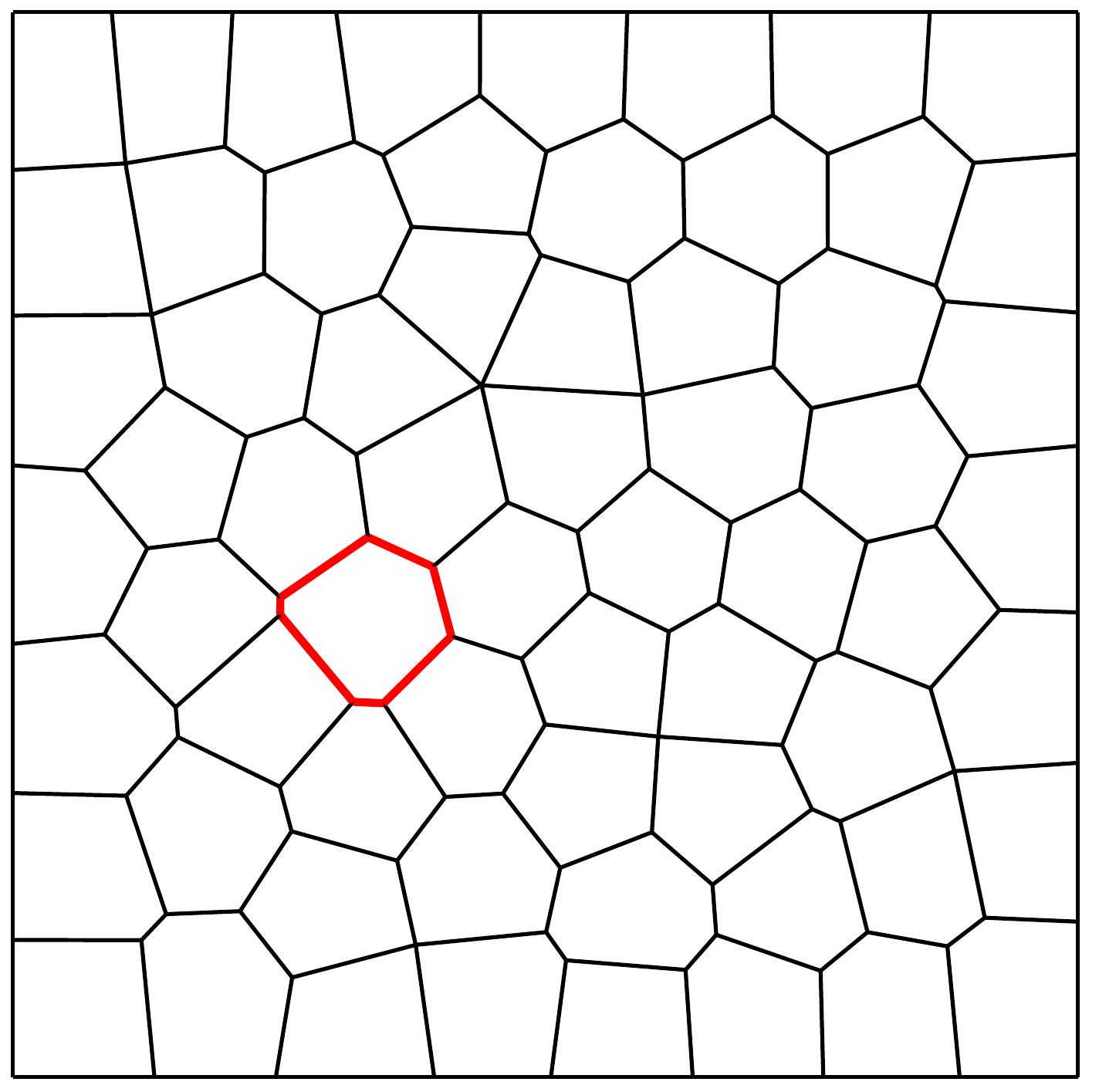}
\caption{Example of the adopted polygonal meshes: \texttt{QUADS} (left), \texttt{VORONOI} (right).}
\label{fig:meshes_a}
\end{center}
\end{subfigure}
\\
\begin{subfigure}{1\textwidth}
\begin{center}
\includegraphics[scale=0.18]{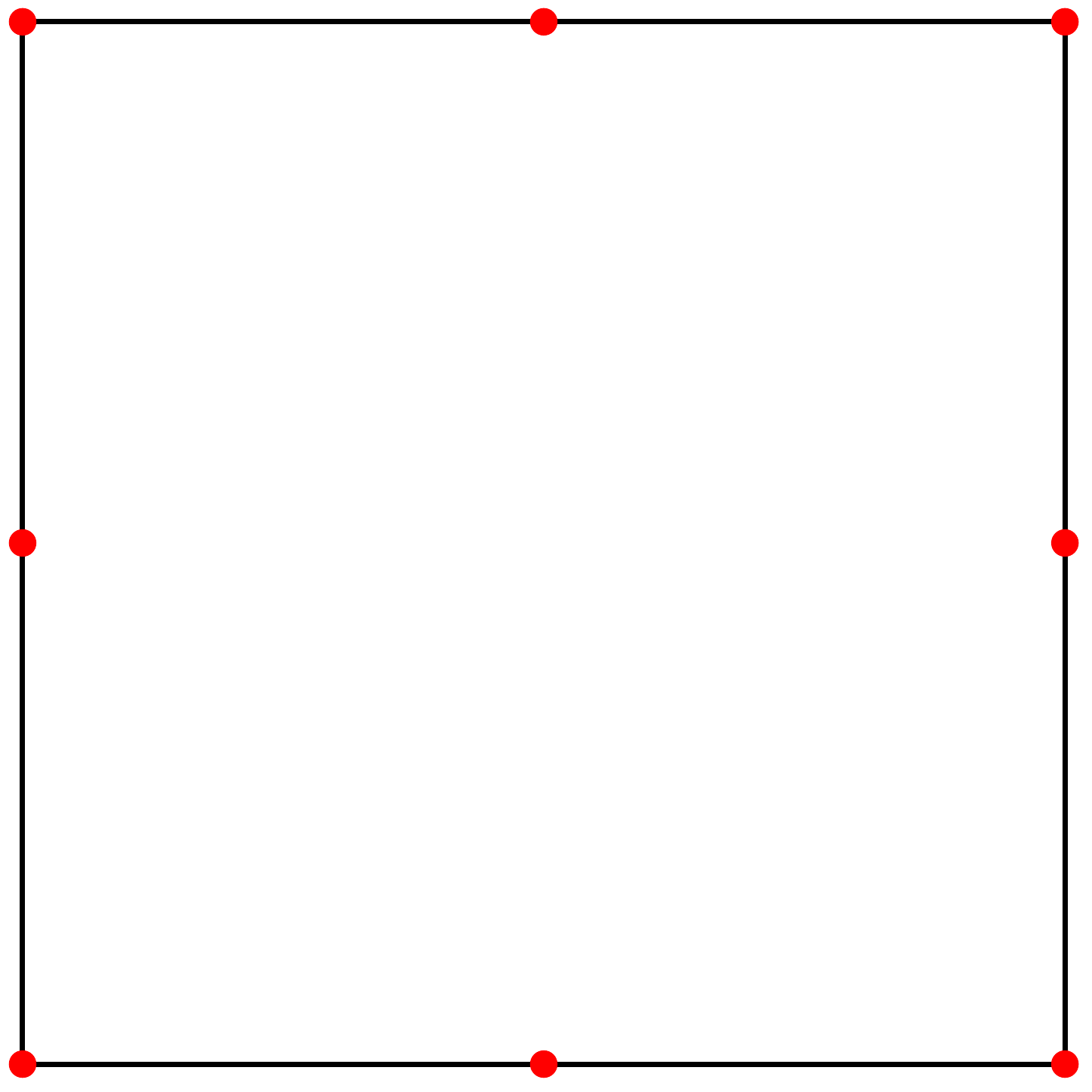}
\quad
\includegraphics[scale=0.18]{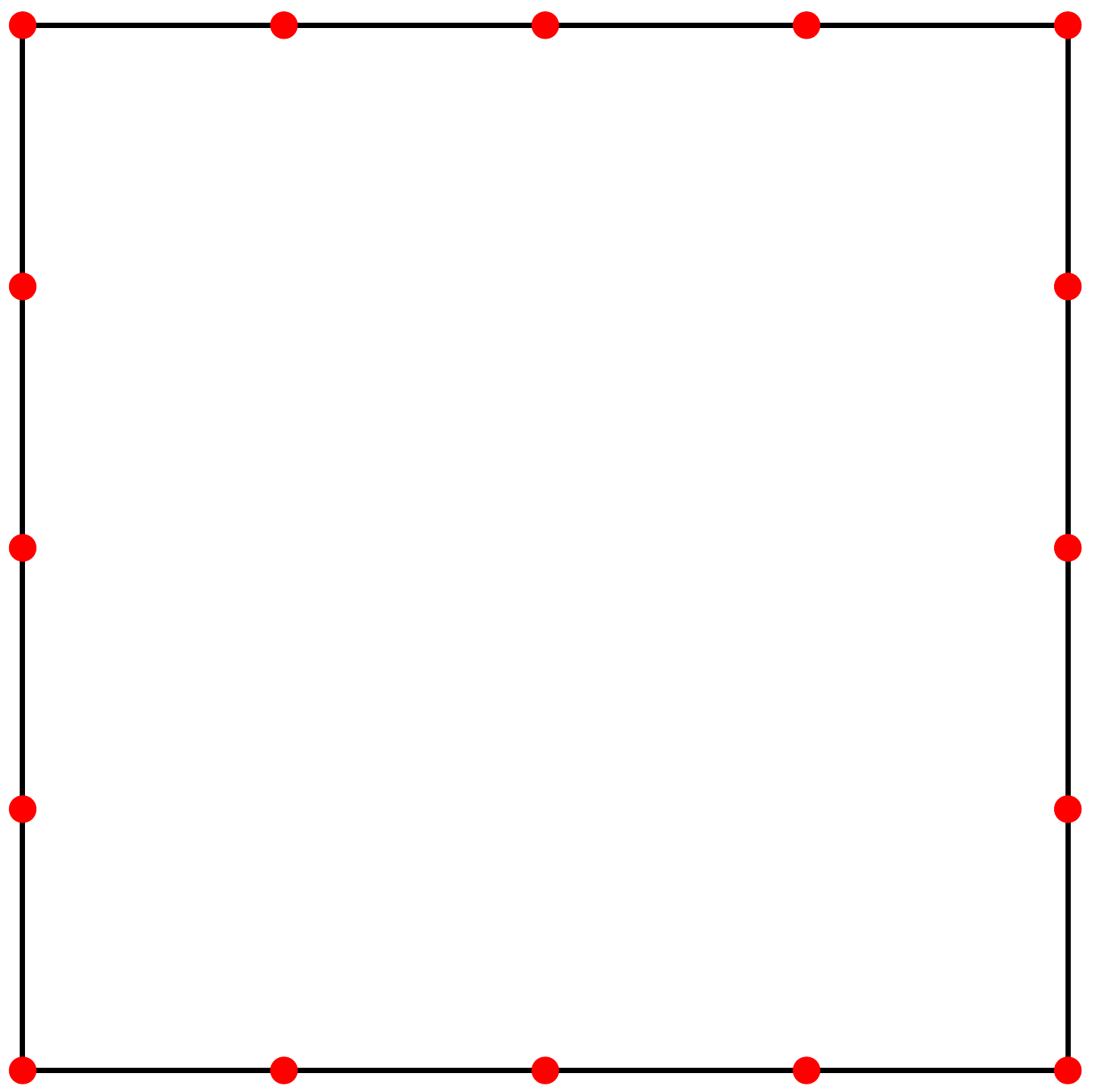}
\quad 
\includegraphics[scale=0.18]{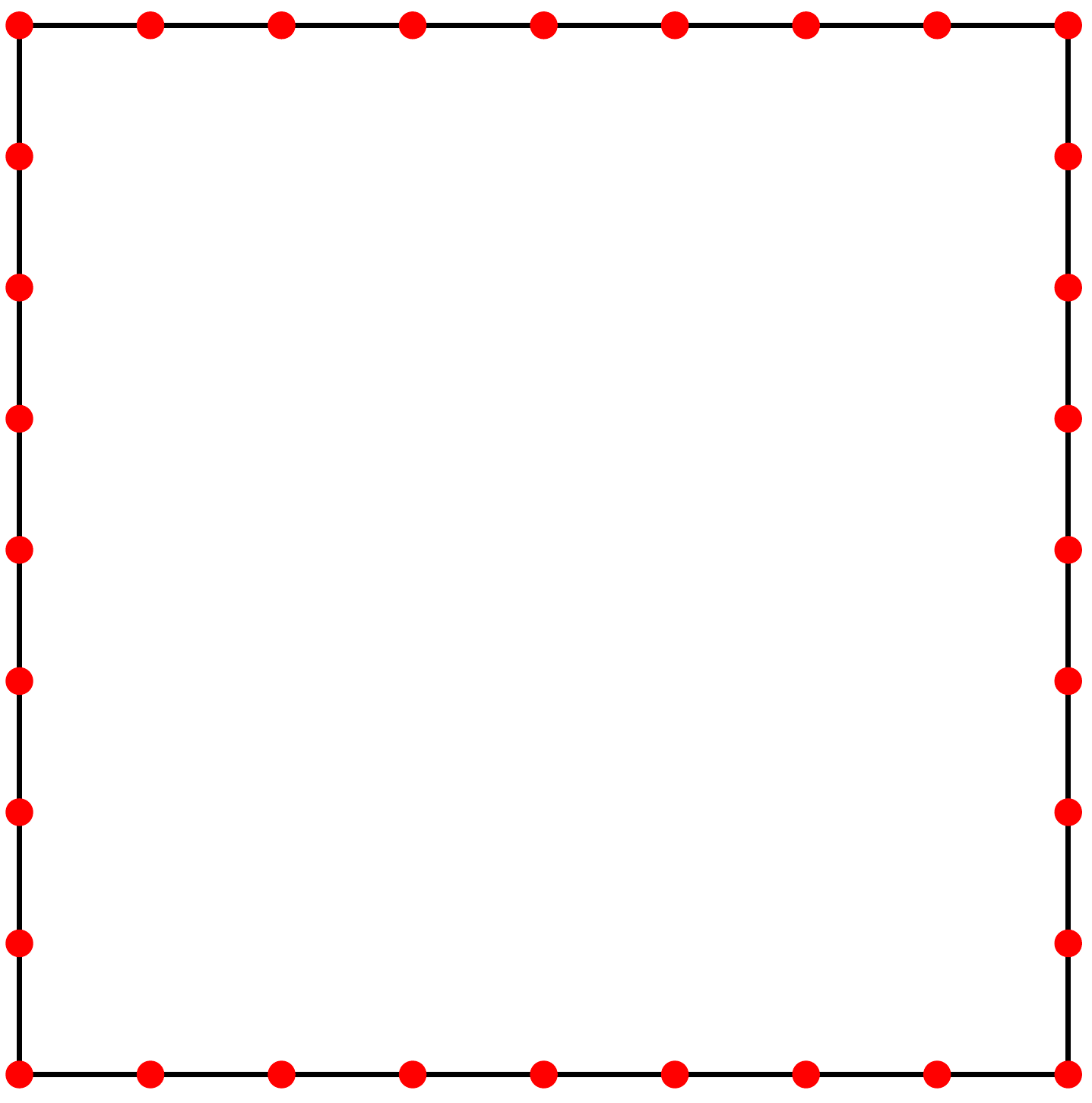}
\quad 
\includegraphics[scale=0.18]{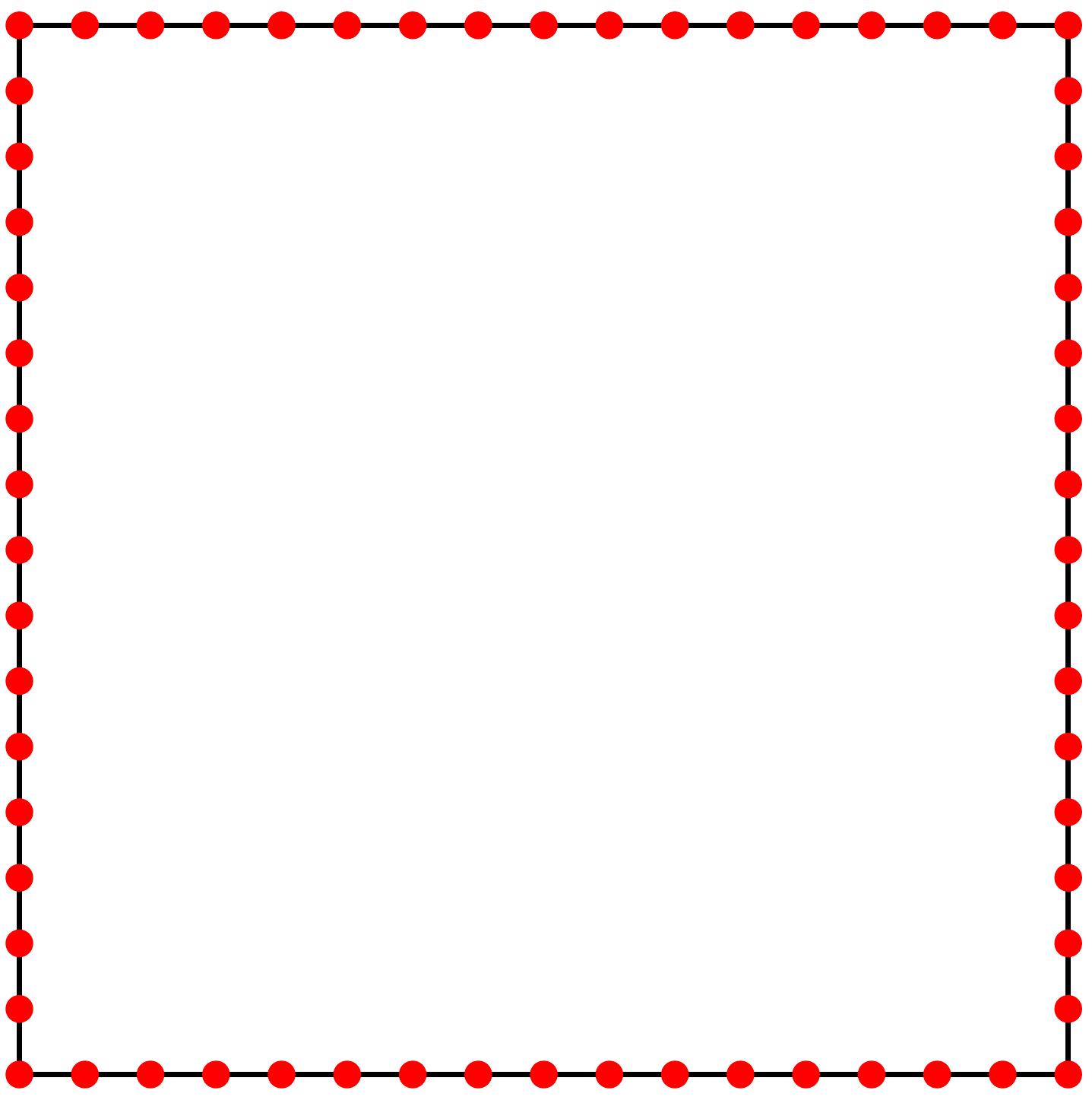}
\caption{Mesh element of the meshes $\mathcal{Q}_h^{\hpartial}$.}
\label{fig:meshes_b}
\end{center}
\end{subfigure}
\\
\begin{subfigure}{1\textwidth}
\begin{center}
\includegraphics[scale=0.2]{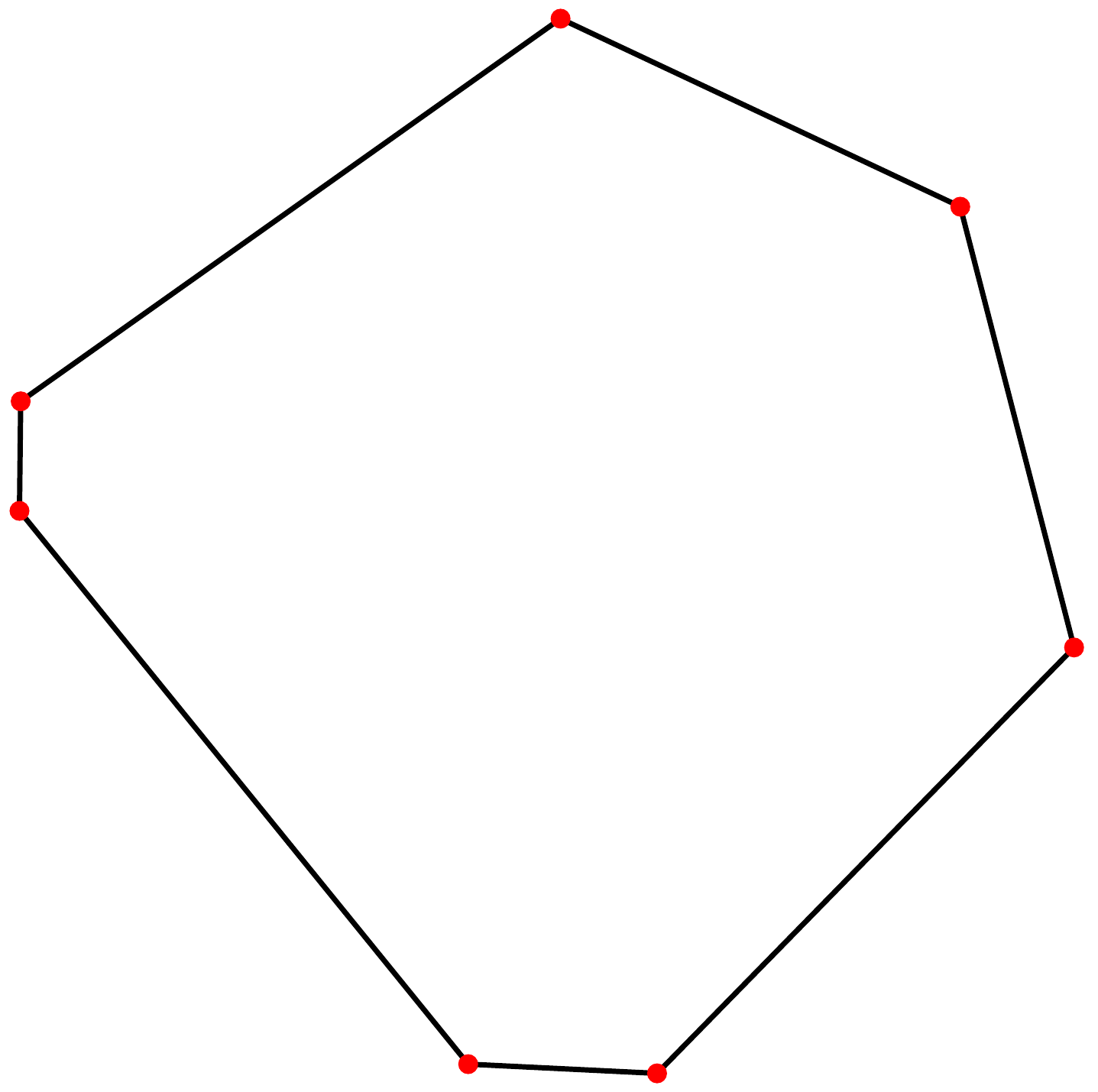}
\quad
\includegraphics[scale=0.2]{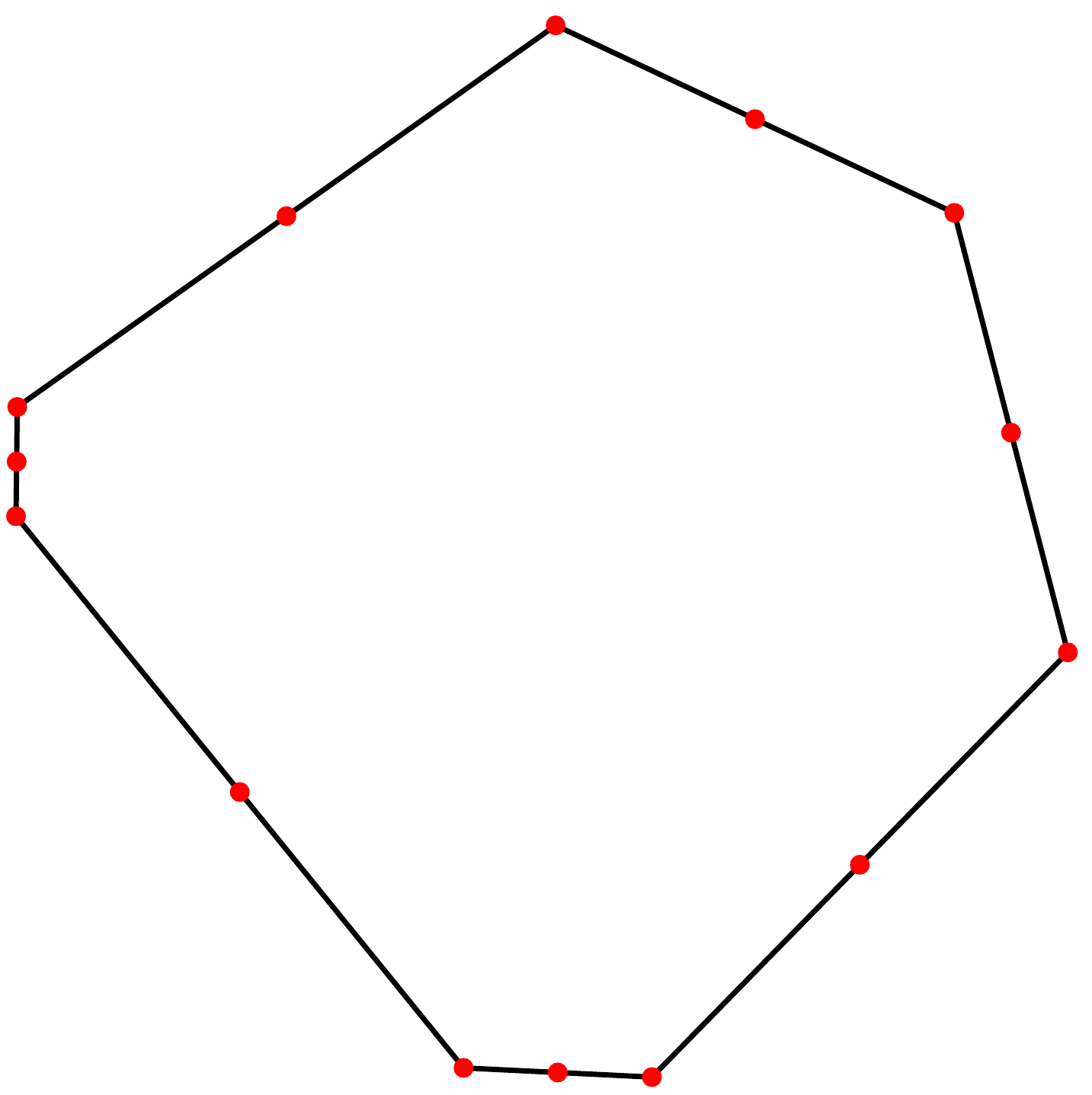}
\quad 
\includegraphics[scale=0.2]{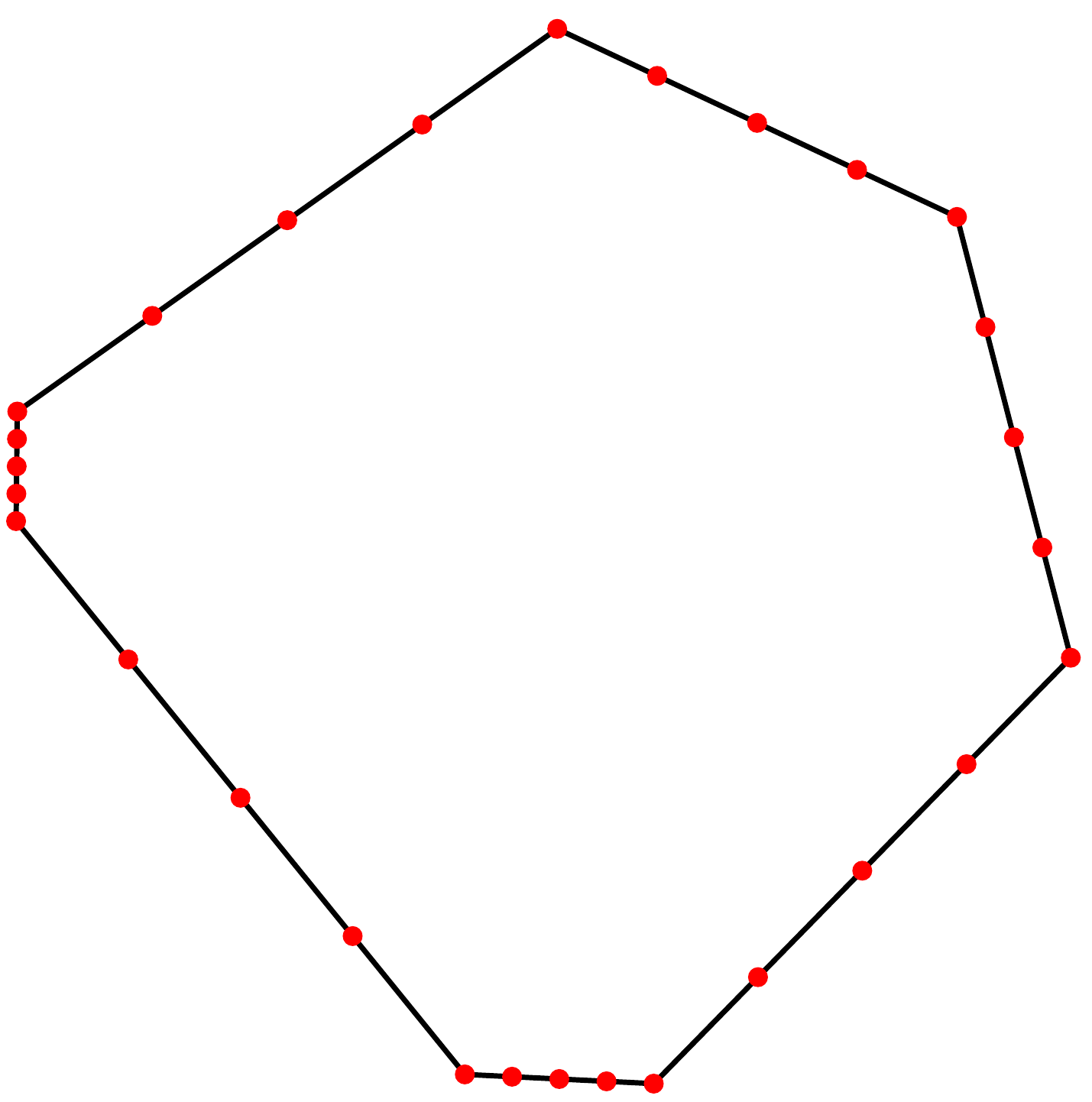}
\quad 
\includegraphics[scale=0.2]{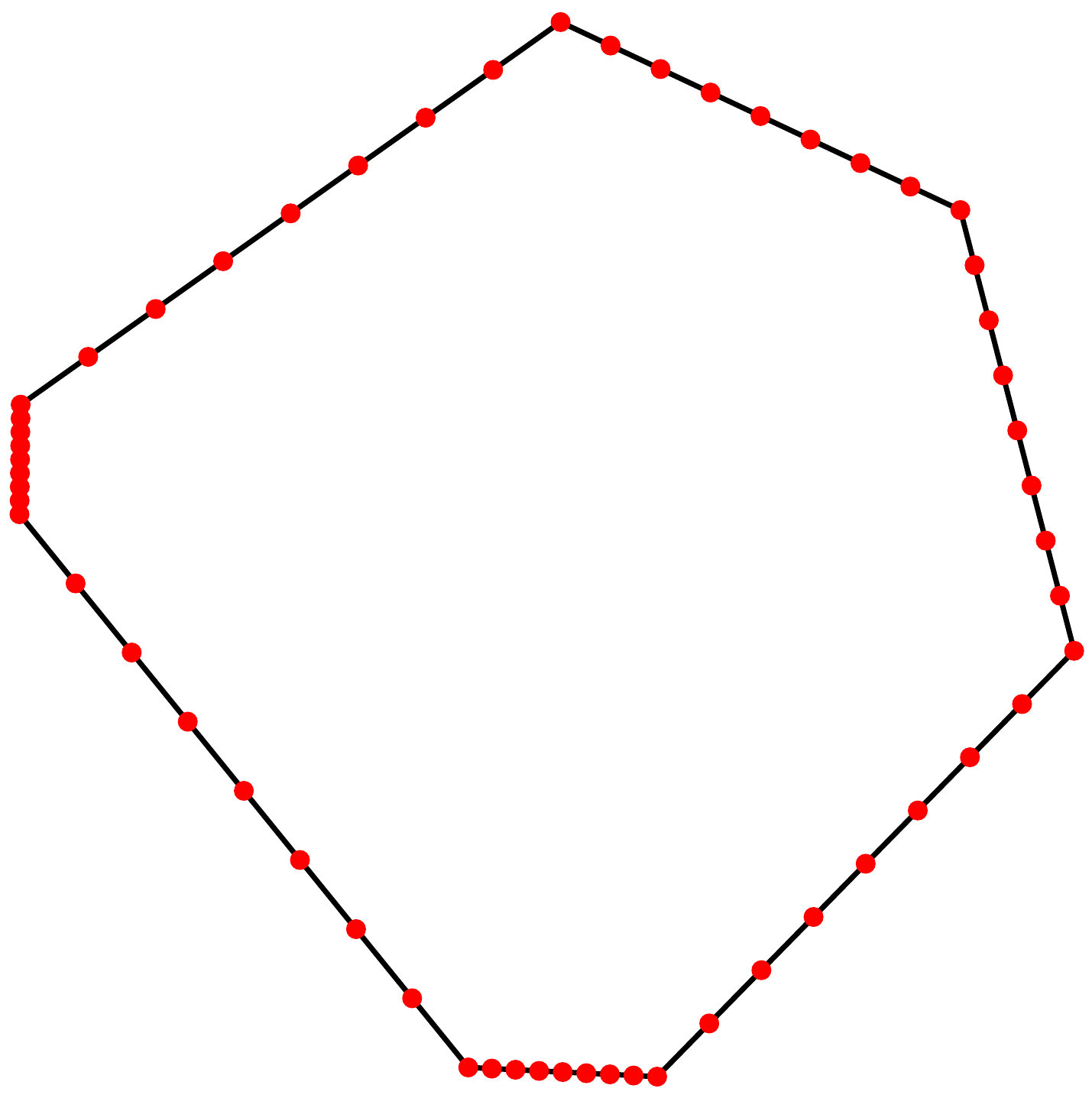}
\caption{Mesh element of the meshes $\mathcal{V}_h^{\hpartial}$.}
\label{fig:meshes_c}
\end{center}
\end{subfigure}
\end{center}
\caption{Test 1. Example of the adopted polygonal meshes and mesh elements.}
\label{fig:meshes}
\end{figure}

In Fig. \ref{fig:test1_1} we display the error \texttt{err(bulk)} and the error \texttt{err(trace)}
for the sequence of quadrilateral meshes $\mathcal{Q}_h^{2^{-4}h}$  i.e. the meshes with the finest edge refinement (the rightmost in Fig. \ref{fig:meshes_b}). 
For the meshes under consideration, $\hpartial$ is much smaller than $h$, therefore, in the light of \eqref{paragone}, we expect that the boundary component of the error is marginal with respect to the bulk component (at least for the considered ranges of $h$).
This phenomena is evident for the error \texttt{err(bulk)} where,  for both stabilizations, we recover the  order of convergence $O(h^{\kint})$, in full accordance with \eqref{paragone}. 
On the other hand, the error \texttt{err(trace)} is by nature direcly related to the mesh element boundary, and therefore one expects a stronger influence of the boundary component of the error. Nevertheless, we can still appreciate that the error is behaving essentially as $O(h^{\kint})$, apart in the more unbalanced case $\kk = (3,1)$, where one can still see the influence of the boundary component of the error.
Analogous results where obtained for the corresponding sequence of Voronoi meshes  $\mathcal{V}_h^{2^{-3}h}$ (not reported).
\begin{figure}[!htb]
\begin{center}
\includegraphics[width=0.49\textwidth]{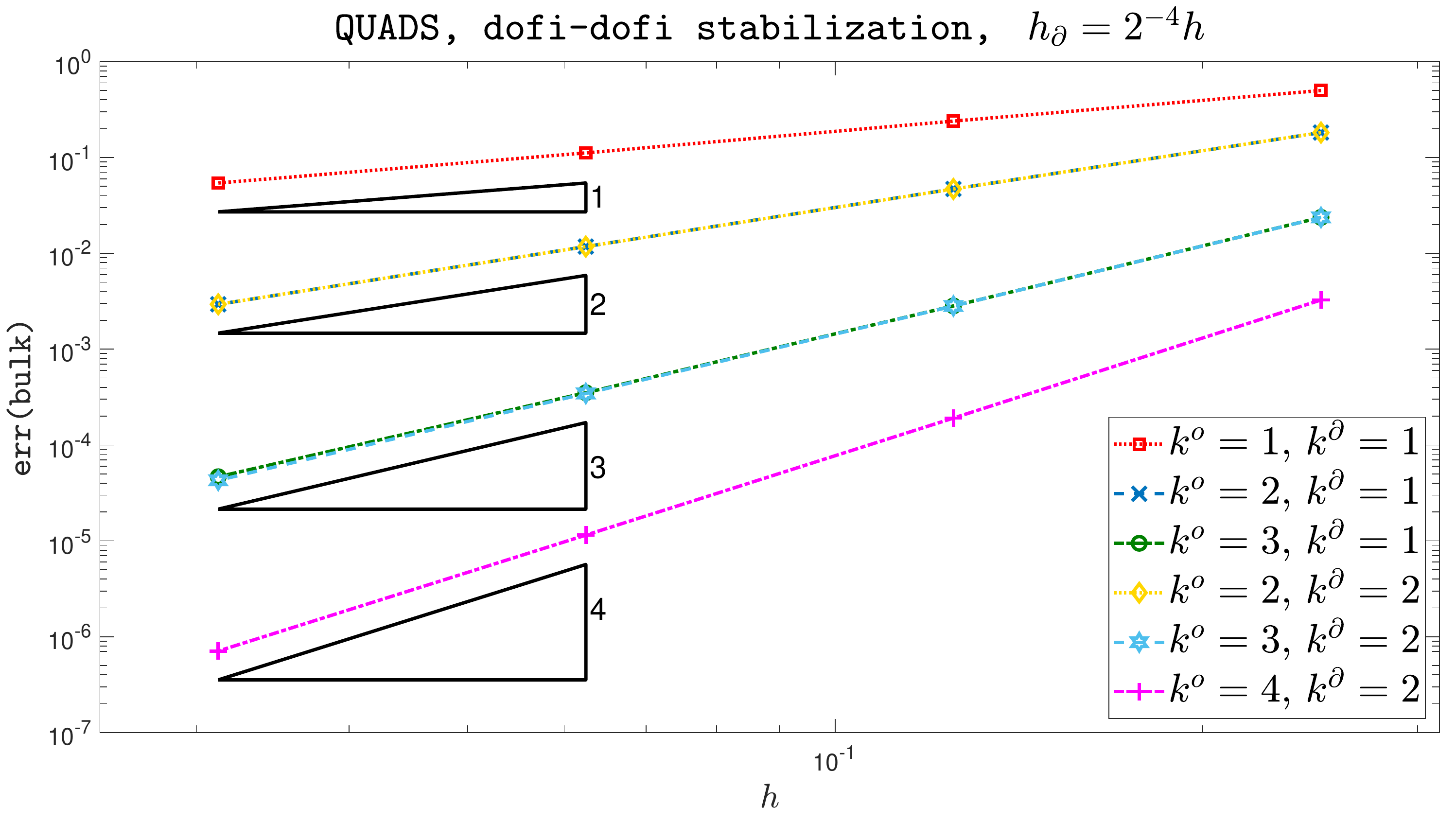}
\includegraphics[width=0.49\textwidth]{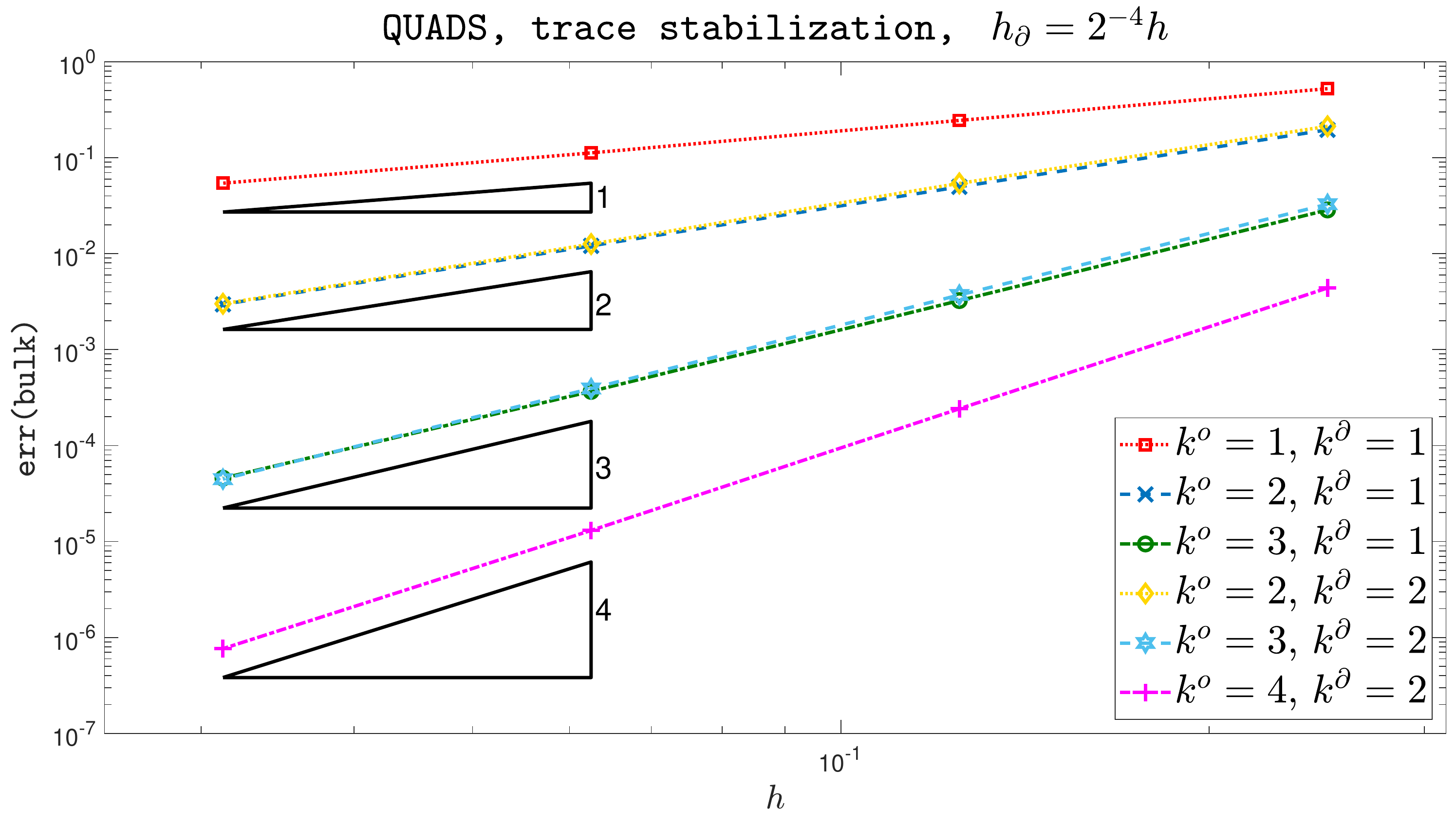}
\\
\includegraphics[width=0.49\textwidth]{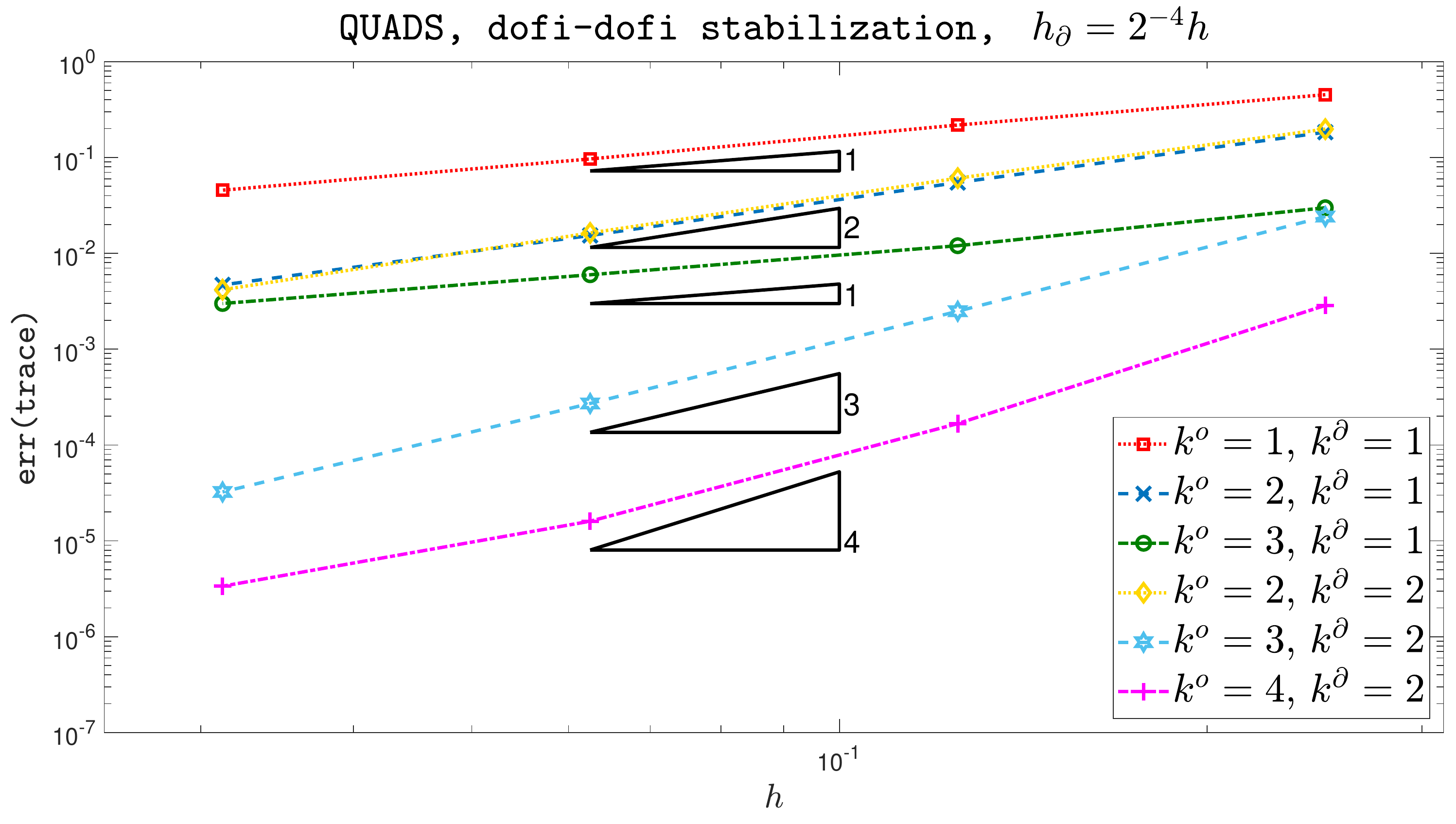}
\includegraphics[width=0.49\textwidth]{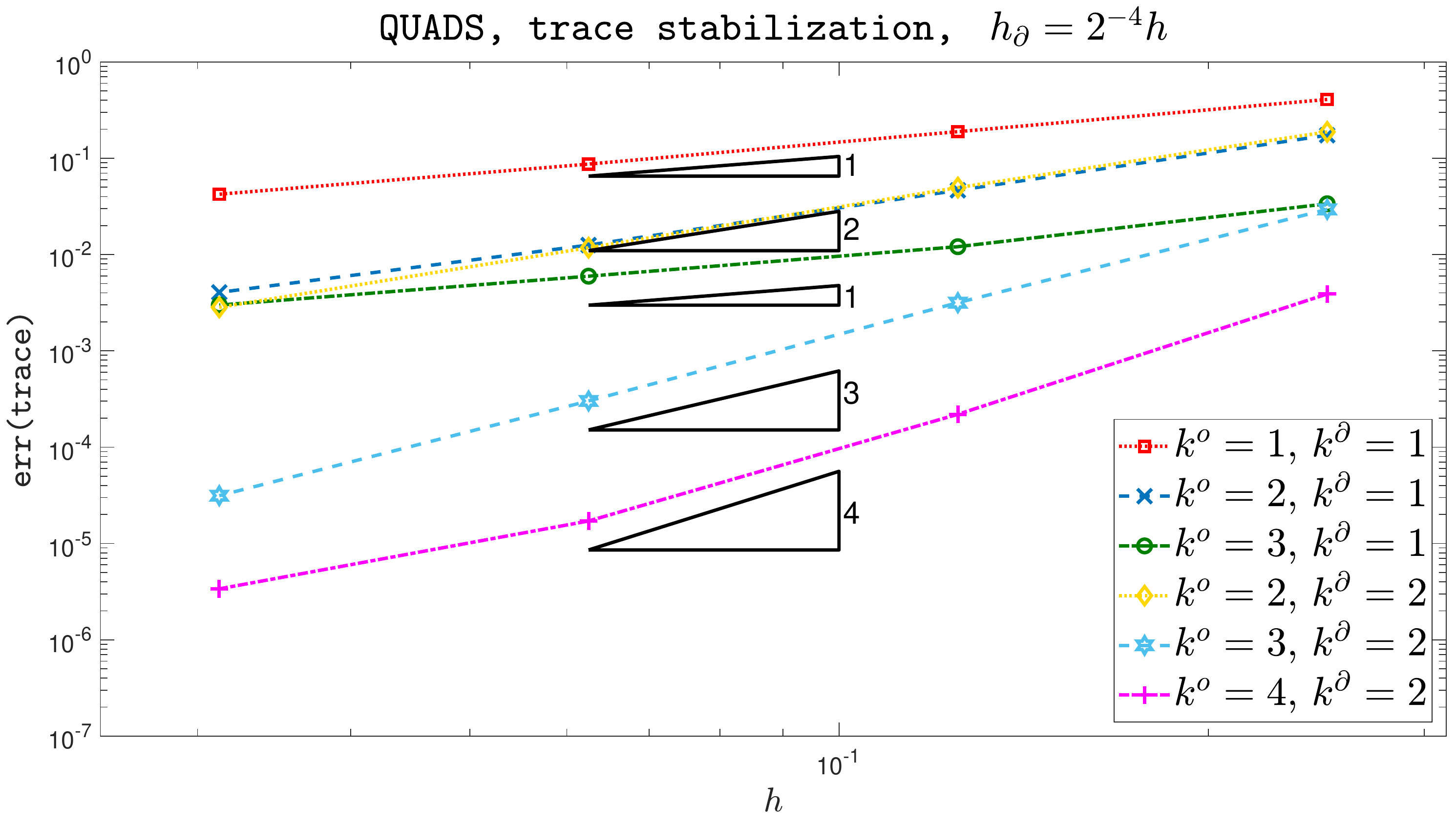}
\caption{Test 1. 
\texttt{QUADS} meshes $\mathcal{Q}_h^{2^{-4}h}$:  \texttt{err(bulk)} (upper) and
\texttt{err(trace)} (lower)  with the
\texttt{dofi-dofi} stabilization (left) and the \texttt{trace} stabilization (right).}
\label{fig:test1_1}
\end{center}
\end{figure}

In Fig. \ref{fig:test1_2} we consider the ``reverse'' point of view, and fix our attention on the Voronoi mesh family: we take the the Voronoi meshes $\mathcal{V}_{2^{-5}}^{\hpartial}$, obtained with the finest diameter $h$, and plot the errors \texttt{err(bulk)} and \texttt{err(trace)} when reducing $\hpartial$. Note that in this investigation the mesh size $h$ (i.e. the element diameters) is not decreasing, as we are only subdividing the element edges into smaller ones. Therefore, as expected, in the case $\kint=\kbou$ there is no error reduction in the graphs. On the other hand, for $\kbou>\kint$ we expect, cf. bound \eqref{paragone}, that the bulk component of the error becomes less relevant and to recover an $O(h_\partial^{\kbou})$ rate of convergence. 
This phenomena can be appreciated in the graphs, expecially for the $\kint=\kbou+2$ cases; clearly, as the edge finesse is increased, the bulk component of the error becomes more relevant and this explains the bends in the curves (for $\hpartial \rightarrow 0$ the error does not converge to zero). Note also that, as expected, the bulk part of the error is more significant for \texttt{err(bulk)} than \texttt{err(trace)}.
Finally we notice that for $\kint=\kbou=1$ the error \texttt{err(trace)} for the \texttt{dofi-dofi} stabilization case is adversely affected by the increasing number of edges $\ell_E$, cf. again \eqref{paragone}. On the contrary, as expected, the error obtained with the \texttt{trace} stabilization is not affected by $\ell_E$.
Analogous results where obtained for the family of quadrilateral meshes $\mathcal{Q}_{2^{-5}}^{\hpartial}$ (not reported).
\begin{figure}[!htb]
\begin{center}
\includegraphics[width=0.49\textwidth]{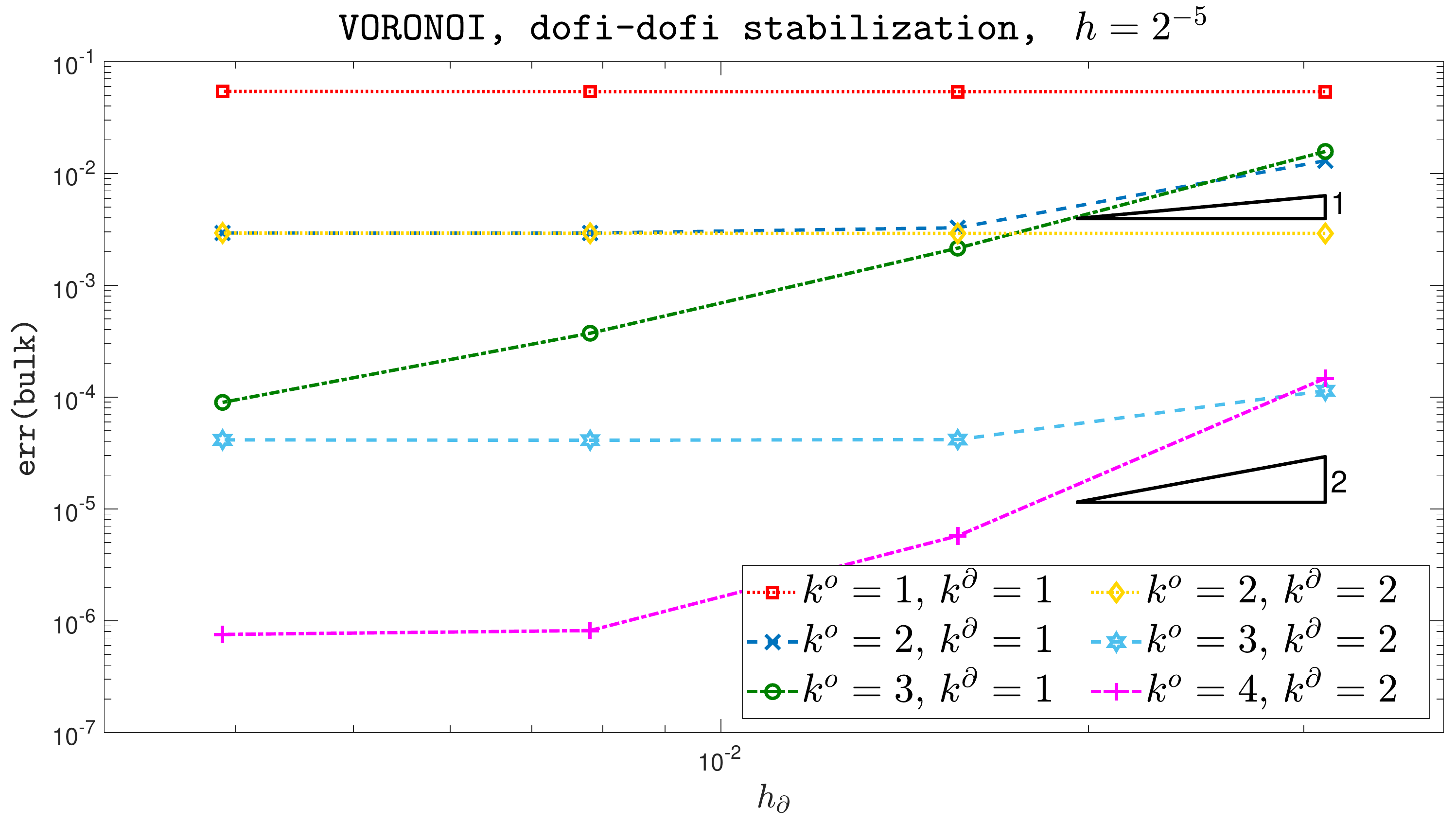}
\includegraphics[width=0.49\textwidth]{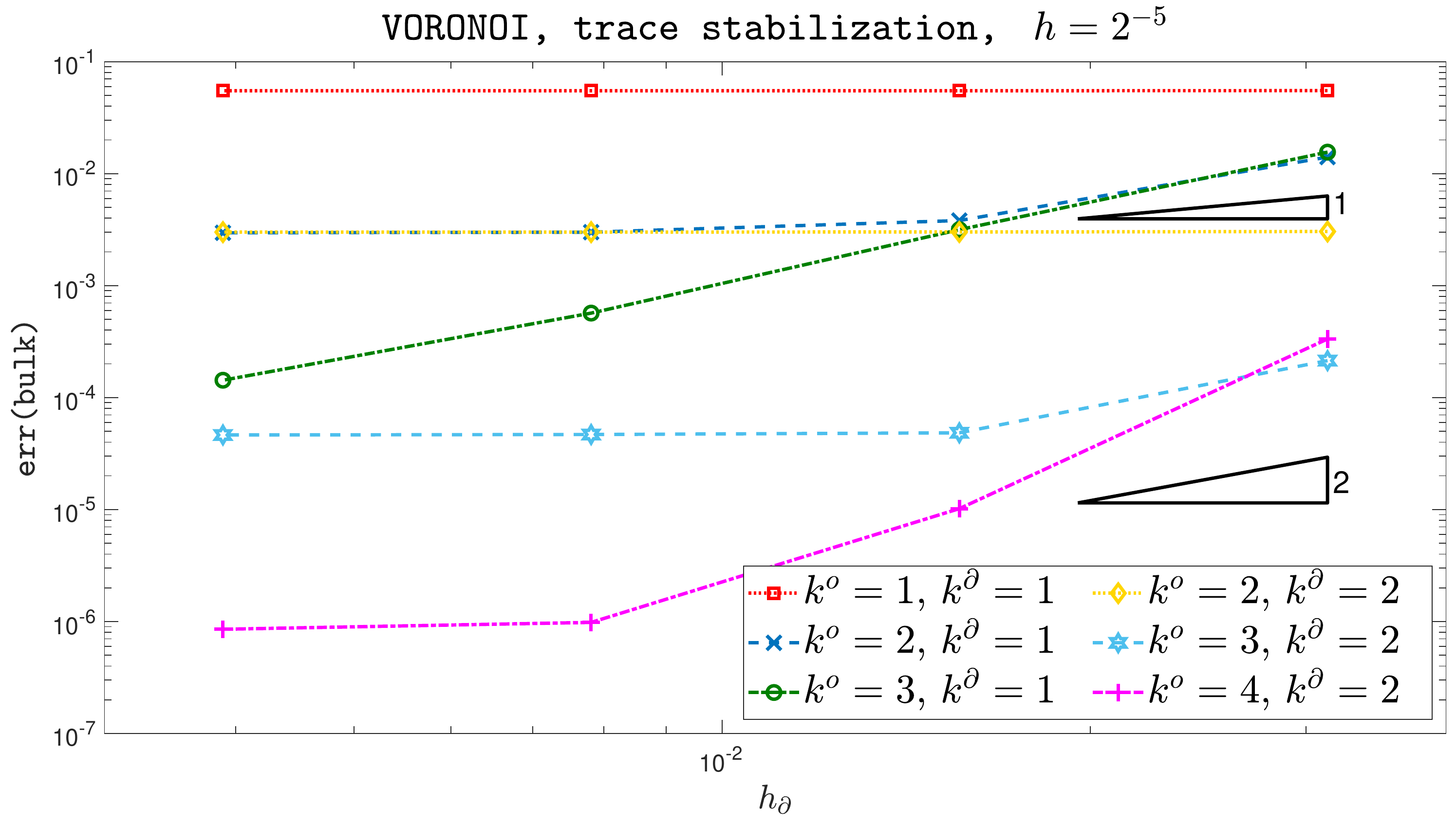}
\\
\includegraphics[width=0.49\textwidth]{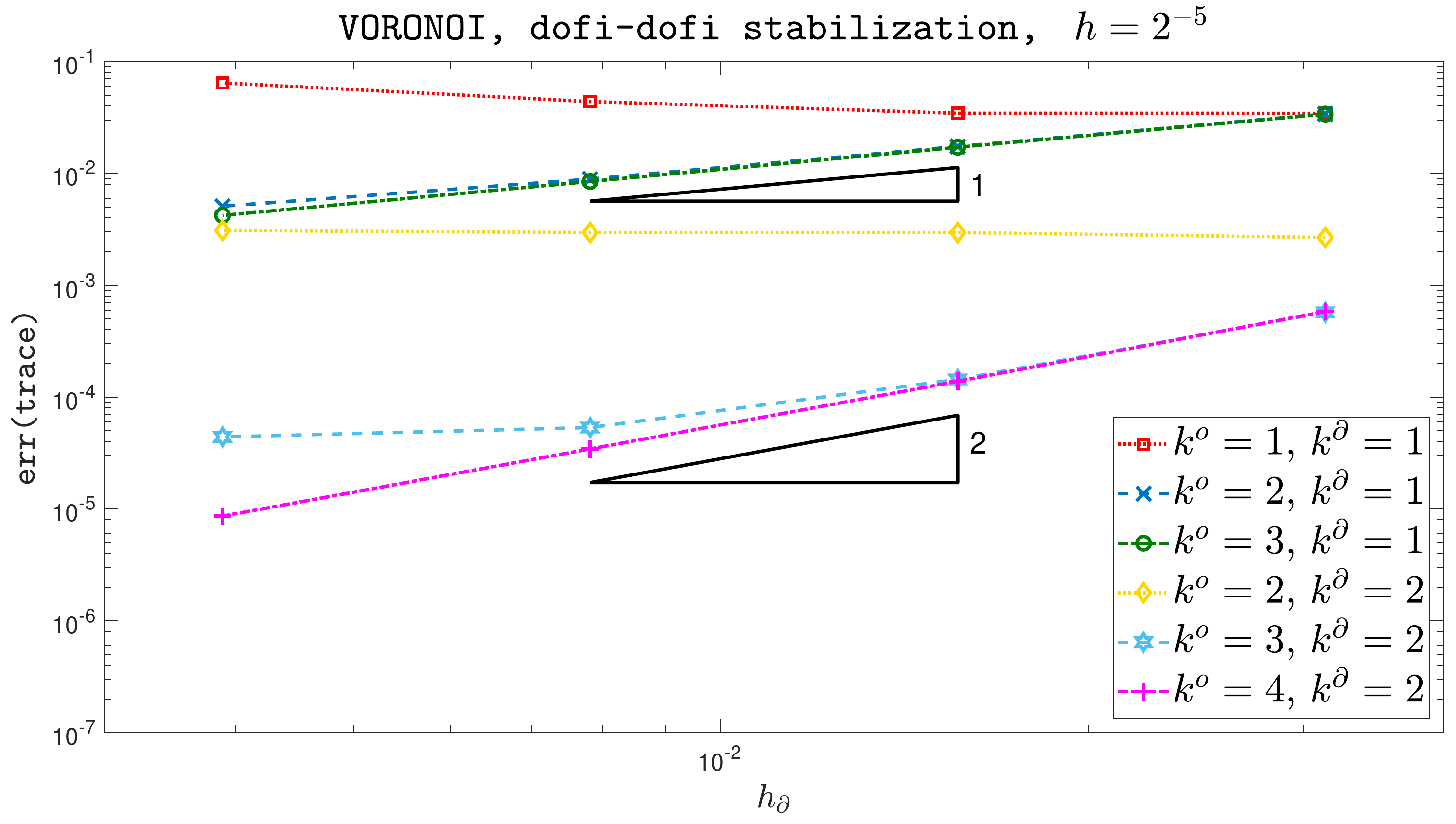}
\includegraphics[width=0.49\textwidth]{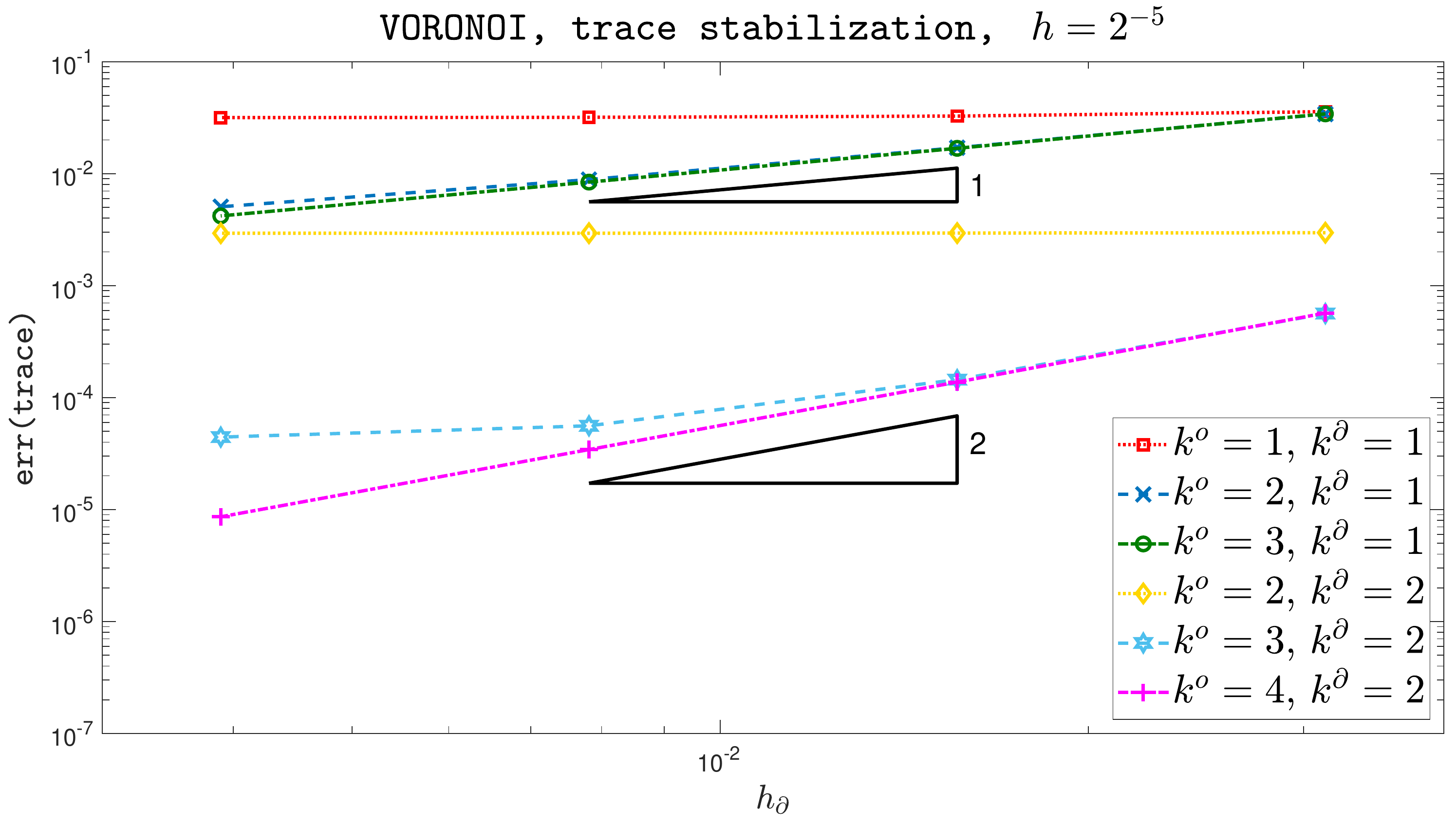}
\end{center}
\caption{Test 1. 
\texttt{VORONOI} meshes $\mathcal{V}_{2^{-5}}^{\hpartial}$:  \texttt{err(bulk)} (upper) and
\texttt{err(trace)} (lower)  with the
\texttt{dofi-dofi} stabilization (left) and the \texttt{trace} stabilization (right).}
\label{fig:test1_2}
\end{figure}

\textbf{Test 2 (Comparison on Voronoi meshes)}
\label{test2}
The goal of the present test is to show the potential advantage of the enriched version in a practical situation. We therefore consider a standard family of Voronoi meshes (namely $\mathcal{V}_h$ of Test 1, without any further subdivision of the edges) and compare the standard VEM $\kint=\kbou$ with the simplest enriched version $\kint=\kbou+1$. 
We stress that the extra DoFs for $\kint > \kbou$ are only internal degrees of freedom and can be easily eliminated from the final linear system by a static condensation procedure; therefore the computational cost of the two schemes is very similar.
In Tab. \ref{tab:test2_1} and Tab. \ref{tab:test2_2} we display respectively the errors \texttt{err(bulk)} and \texttt{err(trace)}  for the generalized VEM scheme with $\kk=(\kbou+1,\kbou)$ and its standard version $\kk=(\kbou,\kbou)$ for $\kbou=1,2$.
In both tables we use the \texttt{dofi-dofi} stabilization, but similar results are obtained with other stabilization options.

\begin{table}[!h]
\centering
\textbf{\texttt{err(bulk)}}
\\
\vspace{0.15cm}
\begin{small}
\begin{tabular}{ccccc}
\toprule 
   $\hbulk$
&  $\kk = (1, 1)$      
&  $\kk = (2, 1)$  
&  $\kk = (2, 2)$      
&  $\kk = (3, 2)$             
\\
\midrule      
\texttt{2\string^-2}
& \texttt{4.5237e-01}
& \texttt{2.7773e-01}
& \texttt{1.7343e-01}
& \texttt{2.3925e-02}
\\
\texttt{2\string^-3}
& \texttt{2.1887e-01}
& \texttt{8.0537e-02}
& \texttt{4.5378e-02}
& \texttt{4.1368e-03} 
\\
\texttt{2\string^-4}
& \texttt{1.1186e-01}
& \texttt{3.2719e-02}
& \texttt{1.1664e-02}
& \texttt{5.3684e-04}
\\
\texttt{2\string^-5}
& \texttt{5.3810e-02}
& \texttt{1.2991e-02}
& \texttt{2.9066e-03}
& \texttt{1.1396e-04}           
\\
\bottomrule
\end{tabular}
\end{small}
\caption{Test 2. \texttt{err(bulk)} for the orders $\kk = (\kint, \kbou)$ with $\kbou =1,2$ and $\kint = \kbou, \kbou+1$ (\texttt{dofi-dofi stabilization}).}
\label{tab:test2_1}
\end{table}

\begin{table}[!h]
\centering
\textbf{\texttt{err(trace)}}
\\
\vspace{0.15cm}
\begin{small}
\begin{tabular}{ccccc}
\toprule 
   $\hbulk$
&  $\kk = (1, 1)$      
&  $\kk = (2, 1)$  
&  $\kk = (2, 2)$      
&  $\kk = (3, 2)$             
\\
\midrule      
\texttt{2\string^-2}
& \texttt{3.8435e-01}
& \texttt{3.7152e-01}
& \texttt{1.5609e-01}
& \texttt{4.3160e-02}
\\
\texttt{2\string^-3}
& \texttt{1.5516e-01}
& \texttt{1.5173e-01}
& \texttt{4.1920e-02}
& \texttt{1.1299e-02} 
\\
\texttt{2\string^-4}
& \texttt{7.4820e-02}
& \texttt{7.3468e-02}
& \texttt{1.0527e-02}
& \texttt{2.1700e-03}
\\
\texttt{2\string^-5}
& \texttt{3.4431e-02}
& \texttt{3.4020e-02}
& \texttt{2.6730e-03}
& \texttt{5.7223e-04}           
\\
\bottomrule
\end{tabular}
\end{small}
\caption{Test 2. \texttt{err(trace)} for the orders $\kk = (\kint, \kbou)$ with $\kbou =1,2$ and $\kint = \kbou, \kbou+1$ (\texttt{dofi-dofi stabilization}).}
\label{tab:test2_2}
\end{table}
For the meshes under considerations $4 \leq \ell_E \leq 8$ therefore the polygons have a moderate number of edges (compared with those in Test 1 and the agglomerated meshes in Test 3), however the benefit provided by the generalized VEM is evident.
The error \texttt{err(bulk)} is reduced in the last refinement of a factor $\approx 4$ for $\kbou=1$ and a factor $\approx 30$ for  $\kbou=2$. It is interesting that, even if the \texttt{err(trace)} is an evaluation of the error on the element boundaries, in the case $\kbou=2$ the enriched version (that we recall modifies the elements only internally) still achieves a significantly better accuracy, roughly a factor of $\approx 4$.

\textbf{Test 3 (Agglomeration meshes)}
\label{test3}
The aim of this test is to consider a family of meshes yielding a more complex element geometry, and to compare the standard VEM ($\kint=\kbou$) with the ``enriched'' one ($\kint>\kbou$).
We consider a sequence of partitions arising from an agglomeration procedure, see for instance \cite{pennesi}, that we depict in Fig. \ref{fig:agglomeration}. 
\begin{figure}[!htb]
\begin{center}
\begin{overpic}[width=0.24\textwidth]{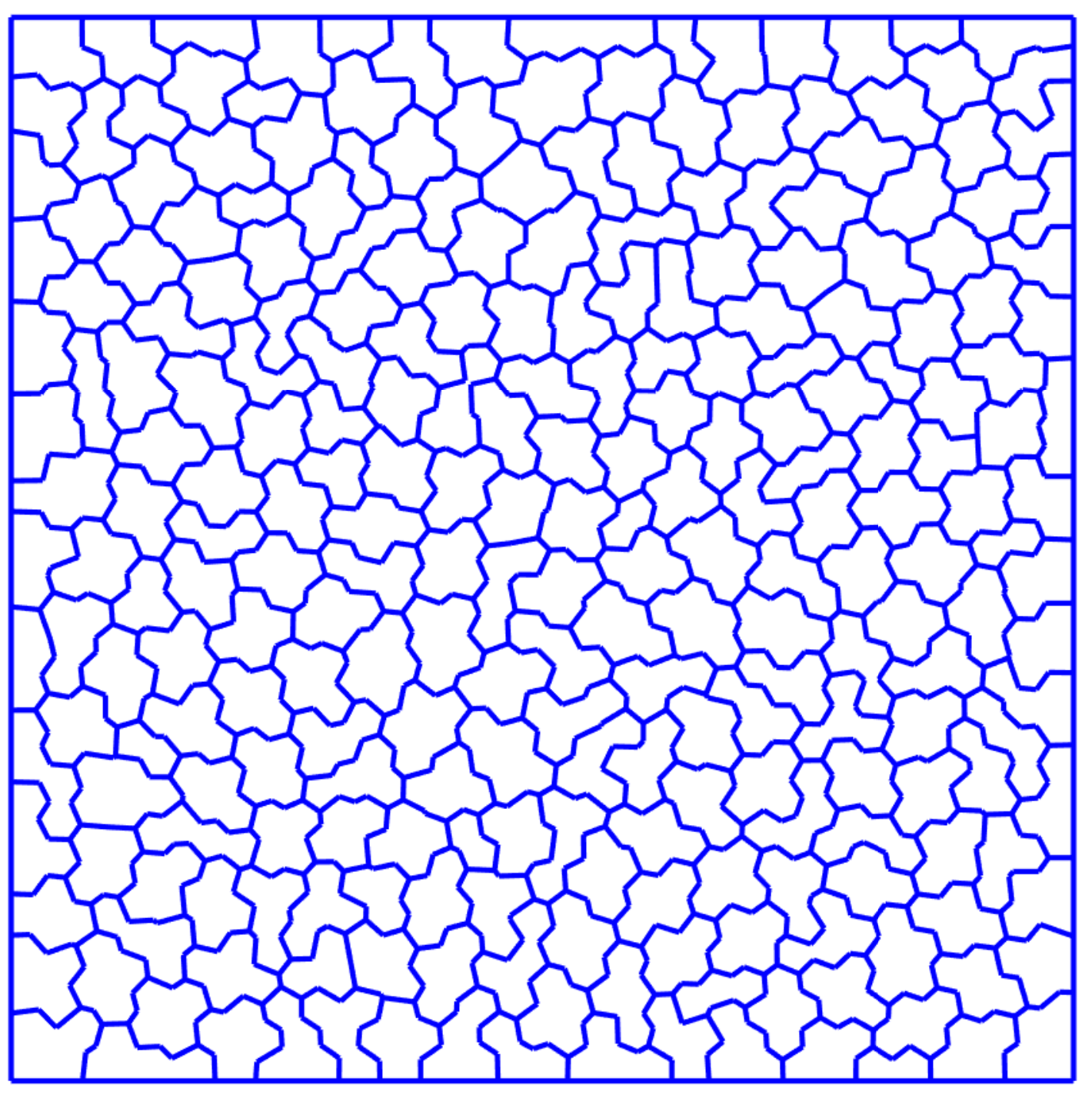}
\put(10, -10){{\texttt{mesh1\_h}, $h \approx 2 h_{\partial}$}}
\end{overpic}
\begin{overpic}[width=0.24\textwidth]{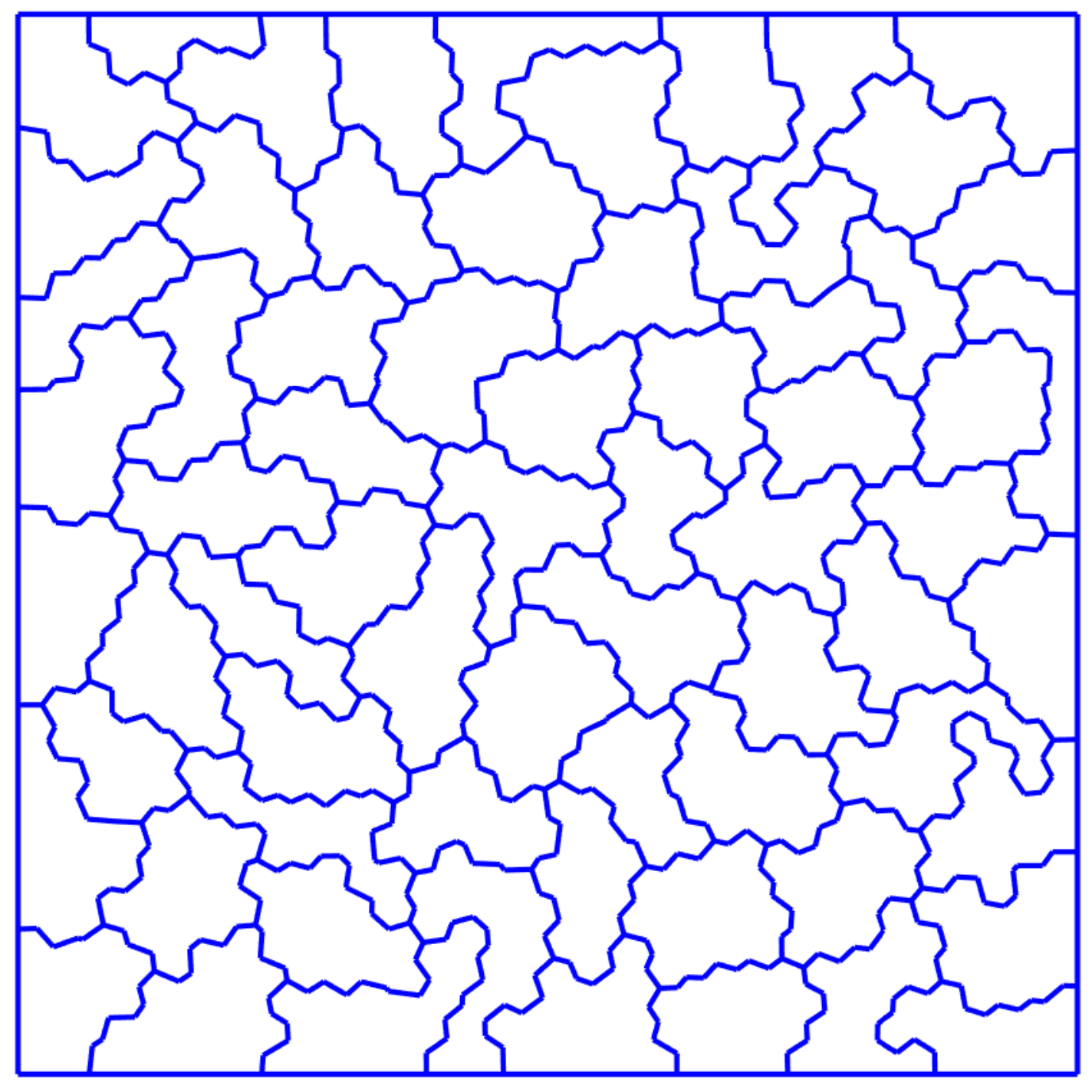}
\put(10, -10){{\texttt{mesh2\_h}, $h \approx 4 h_{\partial}$}}
\end{overpic}
\begin{overpic}[width=0.24\textwidth]{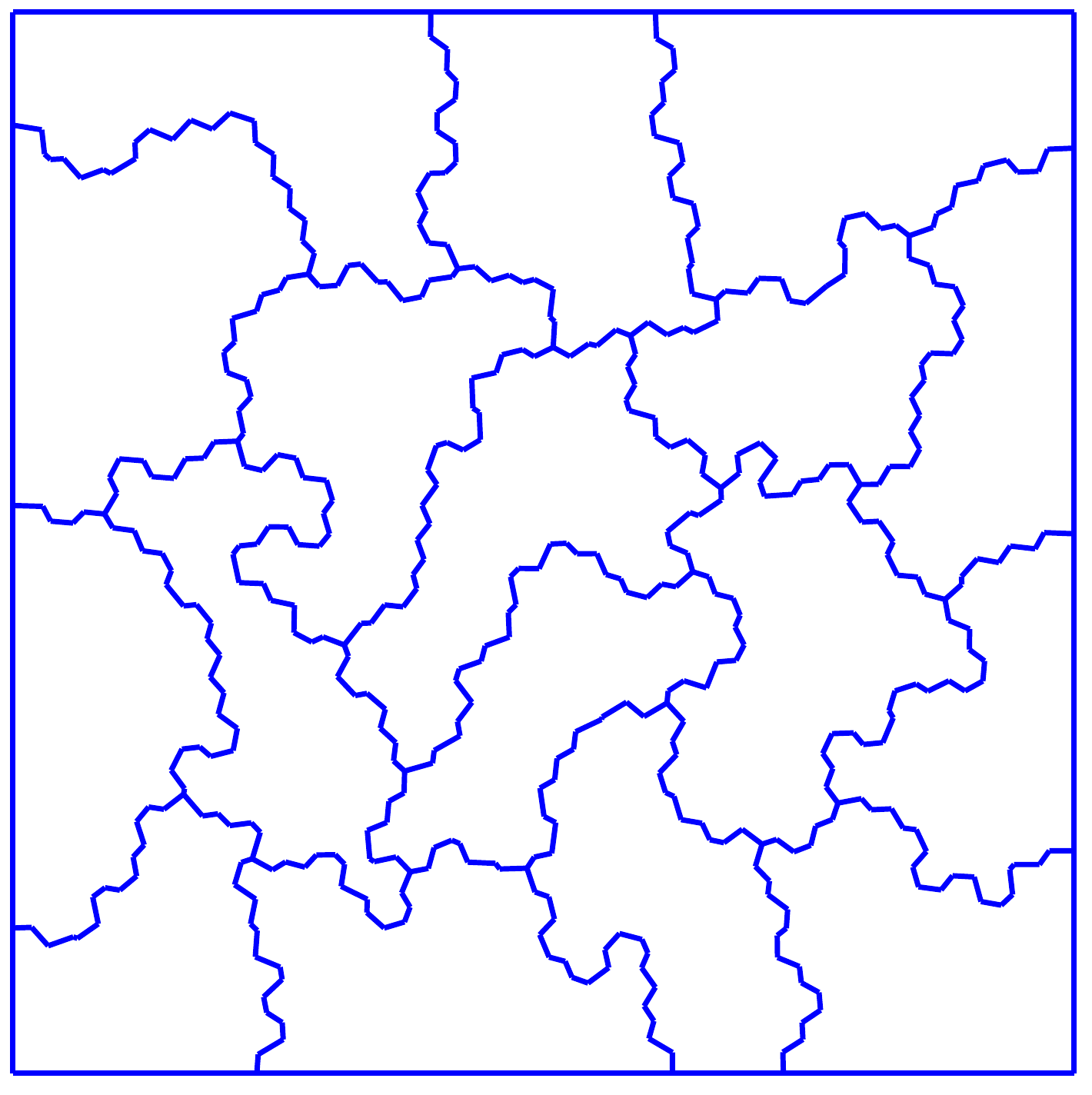}
\put(10, -10){{\texttt{mesh3\_h}, $h \approx 8 h_{\partial}$}}
\end{overpic}
\begin{overpic}[width=0.24\textwidth]{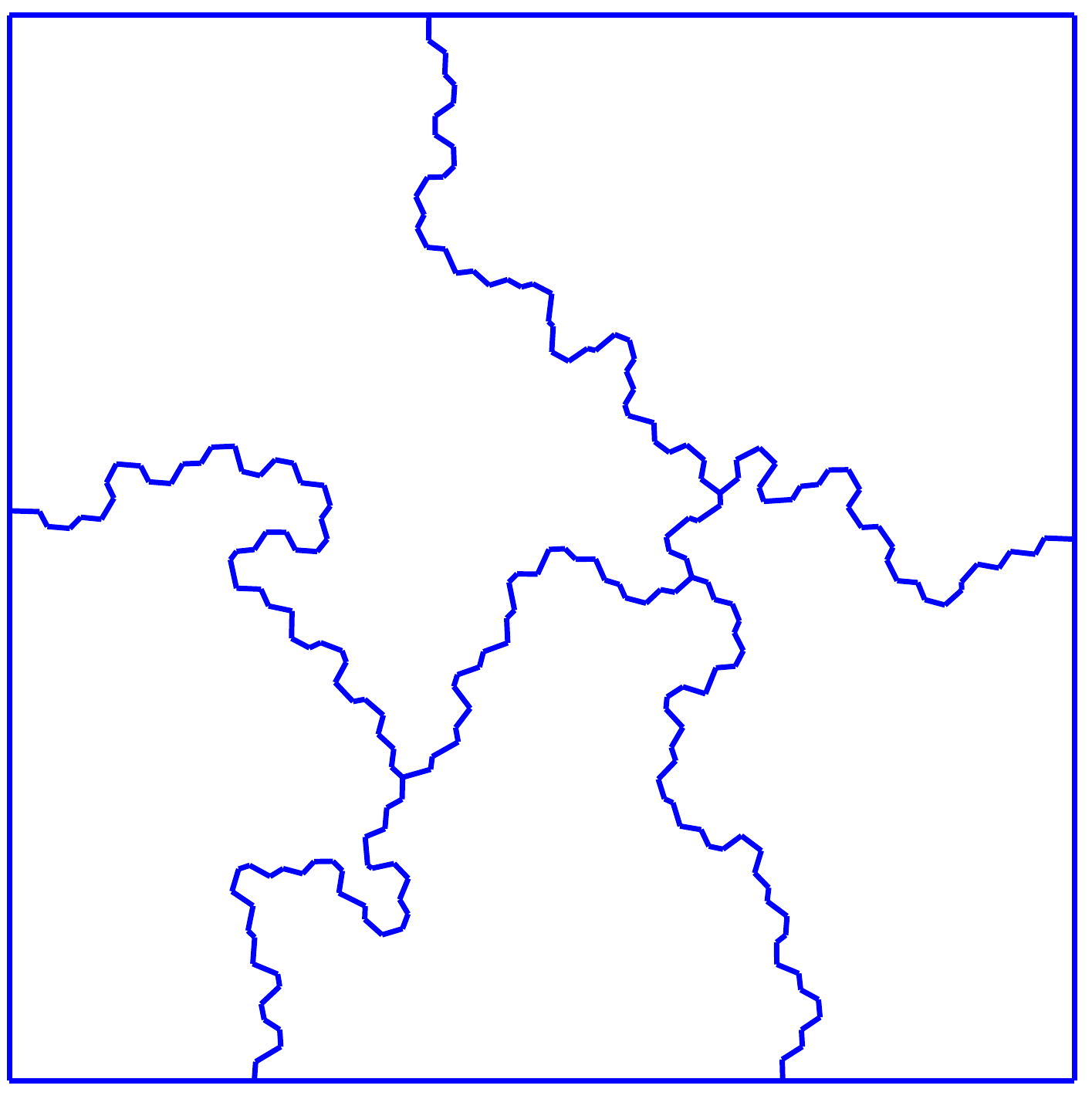}
\put(8, -10){{\texttt{mesh4\_h}, $h \approx 16 h_{\partial}$}}
\end{overpic}
\vspace{0.25cm}
\caption{Test 3. Example of the adopted agglomeration meshes with $h_{\partial} \approx\frac{1}{32}$.}
\label{fig:agglomeration}
\end{center}
\end{figure}
In order to compare the generalized VEM (with $\kint>\kbou$) with its standard counterpart (with $\kint=\kbou$) we define the following 
\[
\texttt{err\%}:= 
\frac{\text{\texttt{err(bulk) with} $\kint > \kbou$}}
{\text{\texttt{err(bulk) with} $\kint = \kbou$}} \,\, \texttt{\%} \,.
\]
%
In Tab. \ref{tab:kbou1} (resp. Tab. \ref{tab:kbou2}) we display the error \texttt{err(bulk)} for the generalized VEM scheme with $\kk=(2,1)$ and $\kk=(3,1)$ (resp.  $\kk=(4,2)$ and $\kk=(4,2)$) and we show the percentage above with respect to the standard VEM scheme of order $k=1$ (resp. $k=2$).
In both tables we use the \texttt{dofi-dofi} stabilization, but similar results can be obtained with other stabilization options.
Agglomerated meshes have very small edges with respect to the element diameter. As a consequence, in the light of our theoretical investigations, we expect the bulk component of the error to be dominant. Therefore, a higher value of $\kint$ is strongly beneficial, as it can be clearly appreciated in the tables (especially in the cases $\kint=\kbou+2$). On the other hand, cf. bound \eqref{paragone}, the gain with respect to the standard case (that is the ratio among the $\kint>\kbou$ case and the standard case in which also the internal degree is taken equal to $\kbou$) is expected to behave as $h^{\kint-\kbou}$; this explains why the percentages are often more favorable for the less agglomerated meshes (which have a smaller $h$).

\begin{table}[!h]
\centering
\begin{small}
\begin{tabular}{clccrcr}
\toprule
& 
& {$\kk=(1,1)$}
& \multicolumn{2}{c}{$\kk=(2,1)$}
& \multicolumn{2}{c}{$\kk=(3,1)$}
\\
\midrule
   $\hpartial$
&  $\texttt{mesh}$      
&  $\texttt{err(bulk)}$ 
&  $\texttt{err(bulk)}$   
&  $\texttt{err\%}$        
&  $\texttt{err(bulk)}$   
&  $\texttt{err\%}$            
\\
\midrule      
\multirow{4}*{$\frac{\sqrt2}{32}$}
& \texttt{1\_h}
& \texttt{1.6963e-01}
& \texttt{2.8258e-02}
& \texttt{16.6\%}
& \texttt{9.6003e-03}
& \texttt{5.6\%} 
\\
& \texttt{2\_h}
& \texttt{3.5495e-01}
& \texttt{1.1075e-01}
& \texttt{31.2\%}
& \texttt{1.4078e-02}
& \texttt{3.9\%}  
\\
& \texttt{3\_h}
& \texttt{7.4933e-01}
& \texttt{5.1930e-01}
& \texttt{69.3\%}
& \texttt{9.3616e-02}
& \texttt{12.4\%}  
\\
& \texttt{4\_h}
& \texttt{9.0279e-01}
& \texttt{7.8457e-01}
& \texttt{86.9\%}
& \texttt{5.1019e-01}
& \texttt{56.5\%}          
\\
\midrule       
\multirow{4}*{$\frac{1}{32}$}
& \texttt{1\_h}
& \texttt{1.1713e-01}
& \texttt{1.4266e-02}
& \texttt{12.1\%}
& \texttt{5.5537e-03}
& \texttt{4.7\%}  
\\
& \texttt{2\_h}
& \texttt{2.5778e-01}
& \texttt{5.3835e-02}
& \texttt{20.8\%}
& \texttt{9.2535e-03}
& \texttt{3.5\%}  
\\
& \texttt{3\_h}
& \texttt{4.9759e-01}
& \texttt{2.0116e-01}
& \texttt{40.4\%}
& \texttt{3.0972e-02}
& \texttt{6.2\%}
\\
& \texttt{4\_h}
& \texttt{7.8435e-01}
& \texttt{4.5238e-01}
& \texttt{57.6\%}
& \texttt{2.0514e-01}
& \texttt{26.1\%}  
\\
\midrule       
\multirow{4}*{$\frac{\sqrt{2}}{64}$}
& \texttt{1\_h}
& \texttt{8.4249e-02}
& \texttt{7.9439e-03}
& \texttt{9.4\%}
& \texttt{4.5417e-03}
& \texttt{5.3\%}  
\\
& \texttt{2\_h}
& \texttt{1.7184e-01}
& \texttt{3.1663e-02}
& \texttt{18.4\%}
& \texttt{9.4749e-03}
& \texttt{5.5\%} 
\\
& \texttt{3\_h}
& \texttt{3.7626e-01}
& \texttt{1.1714e-01}
& \texttt{31.1\%}
& \texttt{2.9892e-02}
& \texttt{7.9\%}
\\
& \texttt{4\_h}
& \texttt{6.6921e-01}
& \texttt{4.2685e-01}
& \texttt{63.7\%}
& \texttt{9.3701e-02}
& \texttt{14.0\%} 
\\
\bottomrule
\end{tabular}
\end{small}
\caption{Test 3. \texttt{err(bulk)} for the orders $\kk = (2,1)$ and $\kk = (3,1)$ 
compared with $\kk=(1,1)$ (\texttt{dofi-dofi stabilization}).}
\label{tab:kbou1}
\end{table}

\begin{table}[!h]
\centering
\begin{small}
\begin{tabular}{clccrcr}
\toprule
& 
& {$\kk=(2,2)$}
& \multicolumn{2}{c}{$\kk=(3,2)$}
& \multicolumn{2}{c}{$\kk=(4,2)$}
\\
\midrule
   $\hpartial$
&  $\texttt{mesh}$      
&  $\texttt{err(bulk)}$ 
&  $\texttt{err(bulk)}$   
&  $\texttt{err\%}$        
&  $\texttt{err(bulk)}$   
&  $\texttt{err\%}$            
\\
\midrule      
\multirow{4}*{$\frac{\sqrt2}{32}$}
& \texttt{1\_h}
& \texttt{2.7234e-02}
& \texttt{1.3951e-03}
& \texttt{5.1\%}
& \texttt{1.4407e-04}
& \texttt{0.5\%} 
\\
& \texttt{2\_h}
& \texttt{1.1032e-01}
& \texttt{9.8795e-03}
& \texttt{8.9\%}
& \texttt{1.7067e-03}
& \texttt{1.5\%} 
\\
& \texttt{3\_h}
& \texttt{5.1839e-01}
& \texttt{9.3871e-02}
& \texttt{18.1\%}
& \texttt{4.2060e-02}
& \texttt{8.1\%}
\\
& \texttt{4\_h}
& \texttt{7.8868e-01}
& \texttt{5.1154e-01}
& \texttt{64.8\%}
& \texttt{2.9915e-01}
& \texttt{37.9\%}            
\\
\midrule       
\multirow{4}*{$\frac{1}{32}$}
& \texttt{1\_h}
& \texttt{1.3555e-02}
& \texttt{4.8473e-04}
& \texttt{3.5\%}
& \texttt{7.1770e-05}
& \texttt{0.5\%} 
\\
& \texttt{2\_h}
& \texttt{5.3251e-02}
& \texttt{4.4397e-03}
& \texttt{8.3\%}
& \texttt{4.4471e-04}
& \texttt{0.8\%}
\\
& \texttt{3\_h}
& \texttt{2.0071e-01}
& \texttt{2.8174e-02}
& \texttt{14.0\%}
& \texttt{6.9751e-03}
& \texttt{3.4\%} 
\\
& \texttt{4\_h}
& \texttt{4.5088e-01}
& \texttt{2.0467e-01}
& \texttt{45.3\%}
& \texttt{4.7336e-02}
& \texttt{10.4\%}
\\
\midrule       
\multirow{4}*{$\frac{\sqrt{2}}{64}$}
& \texttt{1\_h}
& \texttt{7.0567e-03}
& \texttt{1.7948e-04}
& \texttt{2.5\%}
& \texttt{5.4322e-05}
& \texttt{0.7\%}
\\
& \texttt{2\_h}
& \texttt{3.0688e-02}
& \texttt{1.4783e-03}
& \texttt{4.8\%}
& \texttt{3.8698e-04}
& \texttt{1.2\%}
\\
& \texttt{3\_h}
& \texttt{1.1422e-01}
& \texttt{1.2267e-02}
& \texttt{10.7\%}
& \texttt{2.3479e-03}
& \texttt{2.0\%} 
\\
& \texttt{4\_h}
& \texttt{4.2342e-01}
& \texttt{8.1701e-02}
& \texttt{19.2\%}
& \texttt{1.7894e-02}
& \texttt{4.2\%} 
\\
\bottomrule
\end{tabular}
\end{small}
\caption{Test 3. \texttt{err(bulk)} for the orders $\kk = (3,2)$ and $\kk = (4,2)$ 
compared with $\kk=(2,2)$ (\texttt{dofi-dofi stabilization}).}
\label{tab:kbou2}
\end{table}


\subsection*{Appendix}
\def\dx{\textrm{d}x}
\def\dy{\textrm{d}y}

In the present section we give a proof of Lemma \ref{lem:hum}. We first note that, although the norm on the right hand side may seem disproportionately strong, the estimate is sharp also in term of number of edges. Indeed, let $\psi_N$ denote a piecewise linear function on a simple uniform mesh (with $N$ elements) of the interval $[0,1]$, that takes value $1$ on the odd-index nodes and value $-1$ on the even index nodes. Then, it is easy to check that
$$
| \psi_h |_{H^{1/2}(0,1)}^2 \sim N \ , \quad \sum_{e \in {\cal T}_h} \| \psi_h \|_{L^{\infty}(e)}^2 \sim N
\quad \textrm{ as } N \rightarrow \infty \, .
$$
\emph{Proof of Lemma \ref{lem:hum}}.
In the proof, $C$ will denote a generic positive constant, that may change at each occurrence. 
Let ${\cal T}_h$ be a generic mesh of the piecewise quasi-uniform family, associated to an interval $I^h$, and let a generic function $v_h \in {\mathbb S}_k({\cal T}_h)$.  
Let $I^h_n$, for $n=1,2,\dots,\overline{m}$, denote the disjoint sub-intervals associated to the definition of piecewise quasi uniform mesh.
It is clearly not restrictive to assume there are exactly $\overline{m}$ of such subintervals, and it serves the purpose of simplifying the notation. Clearly, the extrema of such sub-intervals are nodes of the mesh ${\cal T}_h$; we define $v_L$ as the unique piecewise linear function (on the mesh ${\cal T}_h$) that takes value $v_h(x)$ on the nodes $x$ that are extrema of a sub-interval, and vanish at all the remaining nodes. 
By following the same direct calculation as in the final part of the proof of Lemma 6.6 in \cite{BLR:2017}, one can easily infer that it exists a constant $C=C(\overline{m})$ such that
\begin{equation}\label{vL-bound}
| v_L |_{1/2,I^h}^2 \leq C \log(1 + R_h) \| v_h\|_{L^\infty(I^h)}^2 \, .
\end{equation}
Moreover, if we define $w_h = v_h - v_L$, such function will vanish at all sub-interval extrema. Therefore, first by a triangle inequality, then using equation (6.12) in \cite{BLR:2017}, it follows
$$
| v_h |_{1/2,I^h} \leq C \left(
\sum_{n=1}^{\overline m} \| w_h \|_{H^{1/2}_{00}(I_h^n)}
+ | v_L |_{1/2,I^h} \right) \, ,
$$
with $C$ universal constant.
Taking the square of the above bound and applying \eqref{vL-bound}, we obtain
\begin{equation}\label{TI-bound}
| v_h |_{1/2,I^h}^2 \leq C \left( 
\sum_{n=1}^{\overline m} \| w_h \|_{H^{1/2}_{00}(I_h^n)}^2
+ \log(1 + R_h) \| v_h\|_{L^\infty(I^h)}^2\right) \, ,
\end{equation}
where now $C=C(\overline{m})$. 
We are left to bound the first term in the right hand side of \eqref{TI-bound}. We fix the attention on a single interval $I_h^n$, $n \in \{1,2,..,\overline{m}\}$, and the associated quasi-uniform mesh, which we denote by $\omega_h$. Let $\{ x_i \}_{i=1}^r$ denote the nodes of $\omega_h$, let $e_i=(x_{i-1}, x_i)$ represent the elements and (by a small abuse of notation) let $h$ represent the characteristic mesh size (cf. the definition of $\overline{c}$ in Definition \ref{pqu}). 
We recall 
\begin{equation}\label{kokoa}
\| w_h \|_{H^{1/2}_{00}(I_h^n)}^2 = 
\int_{I_h^n} \! \int_{I_h^n} \frac{(w_h(x) - w_h(y))^2}{(x-y)^2} \dx\,\dy
+ \int_{I_h^n} \frac{w_h(x)^2}{\rho(x)} \dx \, ,
\end{equation}
where $\rho(x)$ represents the distance of $x$ from the nearest extrema of $I_h^n$.
We here deal only with the first term in the right hand side of the above equation, since the second one can be bounded with analogous arguments. 
By trivial manipulations
\begin{equation}\label{termsplit}
\int_{I_h^n} \!  \int_{I_h^n} \frac{(w_h(x) - w_h(y))^2}{(x-y)^2} \dx\,\dy \leq 2 \, T_1 + T_2
\end{equation}
with
$$
\begin{aligned}
T_1 & = \sum_{i=1}^{r-2} \sum_{j=i+2}^r \int_{e_i} \! \int_{e_j}  \frac{(w_h(x) - w_h(y))^2}{(x-y)^2} \dy\,\dx 
\\
T_2 & =
\sum_{i=1}^{r} \sum_{j=i-1}^{i+1} \int_{e_i} \! \int_{e_j}  \frac{(w_h(x) - w_h(y))^2}{(x-y)^2} \dy\,\dx \, ,
\end{aligned} 
$$
and where, rigorously speaking, the second sum in $T_2$ is 
$\sum_{j=\max{(i-1,1)}}^{\min{(i+1,r)}}$ but we prefer to avoid a heavier notation.
It is easy to check the validity of the following bounds:
$$
\begin{aligned}
T_1 & \leq 2 \sum_{i=1}^{r-2} \sum_{j=i+2}^r \int_{e_i} \! \int_{e_j}  \frac{w_h(x)^2}{(x-y)^2} \dy\,\dx 
+ 2 \sum_{i=1}^{r-2} \sum_{j=i+2}^r \int_{e_i} \! \int_{e_j}  \frac{ w_h(y)^2}{(x-y)^2} 
\dy\,\dx \\
& \leq C \sum_{i=1}^{r-2} \sum_{j=i+2}^r  \| w_h \|_{L^\infty(e_i)}^2  
\int_{e_i} \! \int_{e_j} \frac{1}{(j-i-1)^2 h^2} \dy\,\dx \\
& + C \sum_{i=1}^{r-2} \sum_{j=i+2}^r  \| w_h \|_{L^\infty(e_j)}^2  
\int_{e_i} \! \int_{e_j} \frac{1}{(j-i-1)^2 h^2} \dy\,\dx \\
& \leq C \sum_{i=1}^{r-2} \sum_{j=i+2}^r  \| w_h \|_{L^\infty(e_i)}^2  
\frac{1}{(j-i-1)^2}  
+ C \sum_{i=1}^{r-2} \sum_{j=i+2}^r  \| w_h \|_{L^\infty(e_j)}^2  
\frac{1}{(j-i-1)^2}
\end{aligned}
$$
where the constant $C$ above only depends on $\overline{c}$.
Recalling $\sum_{n=1}^{+\infty} n^{-2} < + \infty$ and rearranging the terms in the sum, the above bound yields
\begin{equation}\label{b:T1}
T_1 \leq C \sum_{i=1}^{r-2} \| w_h \|_{L^\infty(e_i)}^2 +
C \sum_{j=3}^r  \| w_h \|_{L^\infty(e_j)}^2 \leq C \sum_{i=1}^{r} \| w_h \|_{L^\infty(e_i)}^2 \, .
\end{equation}
For the term $T_2$, by the Lipschitz continuity of $w_h$ we infer
$$
T_2 \leq \sum_{i=1}^{r} \sum_{j=i-1}^{i+1} \| w_h' \|_{L^\infty({\tilde{e}}_i)}^2 \int_{e_i} \! \int_{e_j}  1 \ \dy\,\dx ,
$$
where the extended interval $\tilde{e}_i = (x_{i-2},x_{i+1})$ with the usual modification for $i=1$ or $i=r$.
Starting from the above bound, by an inverse estimate for piecewise polynomials
\begin{equation}\label{b:T2}
T_2 \leq C \sum_{i=1}^{r} \sum_{j=i-1}^{i+1} \| w_h' \|_{L^\infty({\tilde{e}}_i)}^2 h^2 
\leq C \sum_{i=1}^{r} \sum_{j=i-1}^{i+1} \| w_h \|_{L^\infty({\tilde{e}}_i)}^2
\leq C \sum_{i=1}^{r} \| w_h \|_{L^\infty(e_i)}^2 \ ,
\end{equation}
where the constant $C$ depends on $k$ and $\overline{c}$.
We now combine \eqref{b:T1} and \eqref{b:T2} into \eqref{termsplit}, then bound the second term of \eqref{kokoa} with analogous arguments. We obtain, for any $n \in \{1,2,..,\overline{m}\}$, 
$$
\| w_h \|_{H^{1/2}_{00}(I_h^n)}^2 \leq  C \sum_{i=1}^{r} \| w_h \|_{L^\infty(e_i)}^2 \, ,
$$
with $C=C(\overline{c},k)$. Substituting the above bound in \eqref{TI-bound} gives
\begin{equation}\label{totti} 
| v_h |_{1/2,I^h}^2 \leq C \left( 
\sum_{e \in {\cal T}_h} \| w_h \|_{L^\infty(e)}^2
+ \log(1 + R_h) \| v_h\|_{L^\infty(I^h)}^2 \right) \, ,
\end{equation}
with $C=C(\overline{m},\overline{c},k)$.
Inequality \eqref{totti} yields our bound, since clearly $\| w_h \|_{L^\infty(e)} \leq 2\| v_h \|_{L^\infty(e)}$ for all $e \in {\cal T}_h$. 
Note that \eqref{totti} would actually imply a stronger bound, where the logarithmic term only multiplies the global $L^\infty$ norm.

\qed

\section*{Acknowledgements}
The authors thank P.F. Antonietti and G. Pennesi for providing the agglomerated mesh files used in the numerical tests section.
The authors  were partially supported by the European Research Council through
the H2020 Consolidator Grant (grant no. 681162) CAVE, ``Challenges and Advancements in Virtual Elements''. This support is gratefully acknowledged.
The first author was partially supported by the italian PRIN 2017 grant
``Virtual Element Methods: Analysis and Applications''. This support is gratefully acknowledged.

\addcontentsline{toc}{section}{\refname}
\bibliographystyle{plain}
\bibliography{biblio}

\end{document}